\documentclass[11pt,leqno]{amsart}

\usepackage{latexsym}
\usepackage{amssymb}
\usepackage{amsmath}
\usepackage{amsthm}
\usepackage{verbatim}
\usepackage{pdfsync}
\usepackage[curve,matrix,arrow,frame,tips]{xy}
\numberwithin{equation}{subsection}

%Last worked on by MR, 13/03/08.

\begin{document}

%%% These commands yield the numbering conventions used in the Kyoto  notes.
%\renewcommand{\thesection}{\Roman{section}}
%\renewcommand{\thesubsection}{\Roman{section}.\arabic{subsection}}

\input KRY.macros

\renewcommand{\E}{{\mathbb E}}

\newcommand{\OO}{\text{\rm O}}
\newcommand{\UU}{\text{\rm U}}

\newcommand{\OK}{O_{\smallkay}}
\newcommand{\DI}{\mathcal D^{-1}}

\newcommand{\pre}{\text{\rm pre}}

\newcommand{\Bor}{\text{\rm Bor}}
\newcommand{\Rel}{\text{\rm Rel}}
\newcommand{\rel}{\text{\rm rel}}
\newcommand{\Res}{\text{\rm Res}}
\newcommand{\TG}{\widetilde{G}}

\parindent=0pt
\parskip=10pt
\baselineskip=14pt

\newcommand{\PP}{\mathcal P}
\renewcommand{\OO}{\mathcal O}
\newcommand{\BB}{\mathbb B}
\newcommand{\GU}{\text{\rm GU}}
\newcommand{\Herm}{\text{\rm Herm}}

\newcommand{\FF}{\mathbb F}
\renewcommand{\MM}{\mathbb M}
\newcommand{\YY}{\mathbb Y}
\newcommand{\VV}{\mathbb V}
\newcommand{\LL}{\mathbb L}

\newcommand{\subover}[1]{\overset{#1}{\subset}}
\newcommand{\supover}[1]{\overset{#1}{\supset}}

\newcommand{\hgs}[2]{\{#1,#2\}}

\newcommand{\wh}[1]{\widehat{#1}}

\newcommand{\Ker}{\text{\rm Ker}}
\newcommand{\YYbar}{\overline{\YY}}
\newcommand{\MMbar}{\overline{\MM}}
\newcommand{\oY}{\overline{Y}}
\newcommand{\dra}{\dashrightarrow}
\newcommand{\dlra}{\longdashrightarrow}

\newcommand{\yy}{\text{\bf y}}

\newcommand{\red}{\text{\rm red}}

\newcommand{\inc}{\text{\rm inc}}

\newcommand{\OKs}{O_{\smallkay,s}}
\newcommand{\OKr}{O_{\smallkay,r}}
\newcommand{\Xs}{X^{(s)}}
\newcommand{\Xo}{X^{(0)}}

\newcommand{\marbull}{\marginpar{$\bullet$}}

\newcommand{\ED}{\end{document}}

\newtheorem{example}[theo]{Example}

\title[Special cycles on unitary Shimura varieties I]{Special cycles on unitary Shimura varieties\\ I. Unramified local theory}

\author{Stephen Kudla}
\address{Department of Mathematics\\
 University of Toronto\\
40 St.~George St., Toronto, Ontario M5S 2E4, 
Canada}
\email{skudla@math.toronto.edu}

\author{Michael Rapoport}
\address{Mathematisches Institut der Universit\"at Bonn\\  
Endenicher Allee 60\\
53115 Bonn\\
Germany}
\email{rapoport@math.uni-bonn.de}

%\author{Stephen Kudla
%\medskip\\
%and
%\medskip\\
%Michael Rapoport}

%\address{
%Department of Mathematics, University of Toronto\hfill
%%40 St. George St.
%%Toronto, Ontario M5S 2E4
%%Canada
%%skudla@math.toronto.edu
%\medskip
%\break
%Department of Mathematics, University of WisconsinBonn
%}

\maketitle

\centerline{\bf Abstract}
The supersingular locus in the fiber at $p$ of a Shimura variety attached to a unitary 
similitude group $\text{\rm GU}(1,n-1)$ over $\Q$ is uniformized by a formal scheme $\Cal N$.
In the case when $p$ is an inert prime, we define special cycles $\ZZ(\xx)$ in 
$\Cal N$, associated to  collections $\xx$ of $m$ `special homomorphisms'
with fundamental matrix $T\in \Herm_m(\OK)$. When $m=n$ and $T$ is nonsingular, 
we show that the cycle $\ZZ(\xx)$ is either empty or is a union of components of the Ekedahl-Oort stratification, 
and we give a necessary and sufficient condition, in terms of $T$, for 
$\ZZ(\xx)$ to be irreducible. When $\ZZ(\xx)$ is zero dimensional -- in which case it reduces to a single point -- 
we determine the length of the corresponding local ring by using a variant 
of the theory of quasi-canonical liftings. We show that this length coincides with the 
derivative of a representation density for hermitian forms.

\section{Introduction}

A relation between a  
generating series constructed from 
arithmetic  
cycles on an integral model of a Shimura curve and the derivative of a Siegel  
Eisenstein series of genus $2$ was  established by one of us in \cite{kudlaannals}. There, the hope is  
expressed that such a relation should hold 
in greater generality for  
integral models of Shimura varieties attached to orthogonal groups of  
signature $(2, n- 2)$ for any $n$.  The case of Shimura curves corresponds to  
$n=3$; the relation for $n=2$ is established in \cite{KRYtiny}, and  
the cases $n=4$ and $n=5$ are considered in \cite{KR1, KR2}.    
However, the case of arbitrary $n$ seems out of reach at the present time, since these Shimura varieties do not represent a moduli problem  
of abelian varieties with additional structure of PEL-type, so that it is  difficult to define and study integral models of them. For the low  
values of $n$ mentioned above, the analysis depends on exceptional isomorphisms
between orthogonal and symplectic groups.

In the present series of papers, we take up an idea already mentioned  
in a brief remark at the very end of section 16 of \cite 
{kudlaannals} and consider integral models of Shimura varieties  
attached to unitary similitude groups of signature $(1, n-1)$  
over $\Q$. These varieties are moduli spaces for abelian varieties for arbitrary $n$, 
and there are good integral models for them, 
at least away from primes of (very) bad reduction, \cite{kottwitzJAMS}, \cite{pappas}, \cite{PR.I}, \cite{PR.II}, 
\cite{PR.III}. 
In a sequel to the present paper, we define special arithmetic cycles in a modular way and study the 
classes determined by such cycles, together with suitable Green currents,  in the arithmetic Chow groups. The 
ultimate goal is to relate the 
generating series defined by the height pairings, or arithmetic intersection numbers, of such classes
to special values of derivatives of Eisenstein series for the group ${\rm U}(n, n)$. 
It should be noted that the complex points of our cycles coincide with the cycles 
defined in \cite{kudlaU21}, \cite{kudlamillson}, and \cite{KMihes}, where the modular properties of 
the generating functions for the associated cohomology classes are established. 

In the present paper, as a first step in this study, we consider the local analogue, in  
which the Shimura variety is replaced by a formal moduli space  
of $p$-divisible groups, the special arithmetic cycles are replaced by formal  
subvarieties, and the special values of the derivative of the  
Eisenstein series are replaced by the derivatives of representation densities of  
hermitian forms.  The link between this local situation and the global one 
is provided by the uniformization of the supersingular 
locus by the formal schemes introduced in  \cite{RZ}.
A similar local analogue occurs in our earlier work  
\cite{KRinventiones}, where the intersection numbers of formal arithmetic cycles on the Drinfeld 
upper half-space, which uniformizes the fibers of bad reduction of Shimura 
curves, are related to the derivative of representation densities of quadratic forms. 
The results of \cite{KRinventiones} are an essential ingredient in the global 
theory of arithmetic cycles on Shimura curves established in \cite{KRYbook}. 
The results of the present paper will play a similar role in the unitary case. 

It turns out that when two global cycles are disjoint on the generic fiber, their intersections 
are supported in the fibers at non-split primes. Thus, for primes not dividing the level, 
there are two cases, depending on whether the 
prime is inert  or ramified in the imaginary quadratic field.  
As indicated in the title,  in the present paper we handle the unramified primes. 

%Note that we consider hermitian forms of signature $(1,n-1)$ for arbitrary $n$, whereas 
%\cite{krinventiones} and \cite{KRY} only involve quadratic forms of signature $(2,1)$. 

We now  describe our results in more detail.

Let $\kay=\Q_{p^2}$ be the unramified quadratic extension of $\Q_p$
and let $\OK=\Z_{p^2}$ be its ring of integers. Let $\FF = \bar \F_p$ and write 
$W=W(\FF)$ for its ring of Witt vectors.
There are two embeddings $\ph_0$ and $\ph_1=\ph_0\circ \s$ of $\kay$  
into $W_\Q=W\tt_\Z\Q$.  Let $\text{\rm Nilp}_W$ be the category of $W$-schemes $S$ 
on which $p$ is locally nilpotent. For any scheme $S$ over $W$,  let
$\bar S=S\times_{\Spec W}\Spec \FF$ be its special fiber. 

The formal scheme on which we work is defined as follows. 
We consider $p$-divisible groups $X$ of dimension $n$ and height  
$2n$ over $W$-schemes $S$,
with an action $\iota:\OK \rightarrow \End(X)$ satisfying the {\it  
signature condition $(1, n-1)$},
\begin{equation}\label{detcond}
\text{\rm char}(\iota(\a),\Lie\, X)(T) = (T-\ph_0(\a))(T-\ph_1(\a))^ 
{n-1}\in \mathcal O_S[T],
\end{equation}
and equipped with  a $p$-principal polarization $\l_{X}$, for which  
the Rosati involution $*$  satisfies $\iota(\a)^*=\iota(\a^\s)$.

Up to isogeny, there is a unique such a triple  $(\X,\iota, \l_\X)$ over $\FF$ 
such that  $\X$ is {\it  
supersingular}\,\footnote{Recall that this means that $\X$ is isogenous to the $n$th power 
of the $p$-divisible group of a supersingular elliptic curve.}.   
Fixing $(\X,\iota, \l_\X)$, we denote by $\Cal N$ the formal scheme over $W$ which  
parametrizes the quadruples $(X, \iota, \l_X,\rho_X)$ over schemes $S$ in $\text{\rm Nilp}_W$,  
where $(X, \iota, \l_X)$  
is as above, and where 
$$\rho_X : X\times_S\bar S\to  \X\times_\FF \bar S$$ 
is a quasi-isogeny which respects the auxilliary structures imposed.   
(See section \ref{DefofMod} and \cite{RZ}, section 1,  for the precise definition of $\Cal N$.)
Then $\Cal N$ is formally  
smooth of relative dimension $n-1$ over $W$, and the underlying reduced  
scheme $\Cal N_{\rm red}$ is a singular scheme of dimension $[(n-1)/2]$ over $\FF$.

In order to explain our results, we need to recall some of the  
results on the structure of $\Cal N_{\rm red}$ due to Vollaard \cite 
{vollaard}, as completed by Vollaard and Wedhorn \cite{vollwedhorn}.   
To the polarized isocrystal $N$ of $\X$  there is associated a  
hermitian vector space $(C, \{\ ,\ \})$ of dimension $n$ over $\kay$  
satisfying the parity condition 
$$\ord \det(C)\equiv n+1 \ {\rm mod}\  2.$$
Here $\det(C) \in \Q_p^\times/N(\kay^\times)$ is the coset determined by $\det((\{c_i,c_j\}))$ for 
any $\kay$ basis $\{c_i\}$ for $C$. 
Note that this condition  
determines $(C, \{\ ,\ \})$  up to isomorphism.  A {\it vertex of  
level} $i$ is an $\OK$-lattice $\L$ in $C$ with
$$p^{i+1}\L^\vee\subset \L\subset p^i\L^\vee,$$
where 
$$\L^\vee = \{\ x\in C\mid \{x,\L\} \subset \OK\ \}$$
is the dual lattice. Such lattices correspond to the vertices of the building $\Cal B(\text{\rm U}(C))$
of the unitary group $U(C)$, hence the terminology. 
The {\it type} of a vertex $\L$  is the index $t(\L)$ of $p^{i+1}\L^ 
\vee$ in $\L$. In fact, $t(\L)$ is always an odd integer between $1$ and $n$.  
To every vertex $\L$ of level $i$, Vollaard and Wedhorn associate a locally  
closed irreducible subset $\Cal V(\L)^o$ of $\Cal N_{\rm red}$ of dimension $\frac{1}{2}(t 
(\L)-1)$  with the following properties: \hfb
\vskip -14pt
a) The closure $\Cal V(\L)$ of $\Cal V(\L)^o$ is the  
finite disjoint union
\begin{equation*}
\Cal V(\L)= \bigcup_{\L'\subset \L} \Cal V(\L')^o,
\end{equation*}
where $\L'$ runs over all vertex lattices of level
$i$  contained in $\L$. Note that for such vertices $t(\L')\leq t(\L)$.\hfb
\vskip -14pt
b) The union of $\Cal V(\L)^o$, as $\L$ ranges over all  
vertices of level $i$, is a connected component $\Cal N_i$ of $\Cal N_{\rm red}$,  
and as $i$ varies, all connected components of $\Cal N_{\rm red}$  
arise in this way.

Thus the combinatorics of the stratification of $\Cal N_{\rm red}$ are controlled by the 
building $\Cal B(\text{\rm U}(C))$, just as the (much simpler) stratification of the special fiber of the formal model of the Drinfeld 
half-space is controlled by the tree $\Cal B(\GL_2(\Q_p))$. 

We next define special cycles on $\Cal N$. Let  $(\YY,\iota,\l_{\YY})$ be the basic object over $\FF$ used in the 
definition of the signature $(1,n-1)$ moduli space $\Cal N$ in the case $n=1$.
Thus  $\YY$ is a supersingular $p$-divisible group over $\FF$ of  
dimension $1$ with $\OK$-action $\iota$ which satisfies the signature  
condition $(1,0)$ with its natural  $p$-principal polarization $\lambda_{\YY}$.
Next, let 
$(\YYbar, \iota,\l_{\YYbar})$ be the triple obtained from  $(\YY,\iota,\l_{\YY})$ by changing $\iota$ to $\iota\circ \s$. 
The $\OK$-action on $\YYbar$  satisfies the signature  
condition $(0,1)$, and,   
again, the triple $(\YYbar, \iota, \l_{\YYbar})$ is unique up to isogeny.  Since 
$\YYbar$ has height $2$, 
the pair $(\YYbar, \iota)$ has a canonical lift $(\bar Y, \iota)$ over $W$, \cite{grossqc}. 

The {\it space of  special  
homomorphisms} is the $\kay$-vector space
$$\VV:=\Hom_{\OK}(\YYbar,\X)\tt_\Z\Q.$$
with $\kay$-valued hermitian form given by
$$
h(x,y) = \l_{\YYbar}^{-1}\circ \hat{y}\circ \l_{\X}\circ x \in \End_ 
{\OK}(\YYbar)\tt\Q \overset{\iota^{-1}}{\isoarrow} \kay.
$$
Here $\hat y$ is the dual of $y$. 
The parity of  $\ord\det(\VV)$ is always odd.

For a pair of integers $(i,j)$ and a special homomorphism $x\in \VV$,  there is an associated {\it special cycle}
$\ZZ_{i,j}(x)$ where, for any $S\in \text{\rm Nilp}_W$,  $\ZZ_{i,j}(x)(S)$ is the subset of points $(X, \iota, \lambda_X, \rho_X)$ in $\Cal N_j(S)$  
where the composition, a quasi-homomorphism,  
$$  \YYbar \times_\FF \bar S\overset{p^i\cdot x} {\lra} \X\times_\FF \bar S\overset{\rho_X^{-1}}{\lra} X\times_S\bar S$$
extends to an $\OK$-linear homomorphism 
$$\bar Y\times_W S \lra X$$
from the canonical lift of $\YYbar$ to $X$.  
Then $\ZZ_{i,j}(x)$ is a relative (formal) divisor  
in $\Cal N_j$. More generally, for an $m$-tuple $\xx = [x_1,\dots,  
x_m]$ of special homomorphisms $x_r\in \VV$,
the {\it associated special cycle} $\ZZ_{i,j}(\xx)$ is the  
intersection of the special cycles
associated to the components of $\xx$. 

For a collection $\xx$ of special homomorphisms,  we define the {\it  
fundamental matrix}
$$T(\xx)=h(\xx, \xx)= (h(x_r,x_s))\in {\rm Herm}_m(\kay).$$ 
If a pair $(i,j)$ is fixed, we let $\widetilde T = p^{2i-j}\,T(\xx)$.

We now assume that $m=n$. 
This will be the situation that arises from the global setting when one considers the arithmetic intersections
of cycles in complementary dimensions.  
First, we show that, if $\widetilde T\notin \Herm_n(\OK)$, then $\ZZ_{i,j}(\bold x)$ is empty. 
Next, assume that  $\det T(\xx) \ne 0$, as will be the case in the global setting
when the cycles do not meet on the generic fiber. When $\widetilde T$ is integral,  our main result describes the structure of the cycle $\ZZ_{i,j}(\xx)$ 
in terms of the Jordan block structure of $\widetilde T$.  

\begin{theo}\label{mainintrothm}
Suppose that $\det T(\xx) \ne 0$ and that $\widetilde T\in \Herm_n(\OK)$. 
Write $\red(\widetilde T)$ for the image of $\widetilde T$ in $\Herm_n(\F_{p^2})$. 

\noindent (i) {\rm (compatibility with the stratification)} $\ZZ_{i,j} 
(\bold x)_{\rm red}$ is a union of strata $\Cal V(\L)^o$ where $\L$ ranges over a 
finite set of vertices of level $j$
which can be explicitly described in terms of $\xx$. 

\noindent  (ii) {\rm (dimension)} Let $r^0(\widetilde T) = n-\text{\rm rank}(\red(\widetilde T))$ be the 
dimension of the radical of the hermitian form $\red(\widetilde T)$. 
Then $\ZZ_{i,j}(\bold x)_{\red}$ is  
purely of dimension 
$\left[(r^0(\widetilde T)-1)/2\right].$

\noindent (iii) {\rm (irreducibility)} Let\footnote{See Theorem~\ref{propirred} for the notation.}
$$\tilde T \simeq \diag(1_{n_0},  p 1_{n_1},  \dots,  p^k 1_{n_k})$$
be a Jordan decomposition of   $\tilde T$ and let
$$n^+_{\text{\rm \ even}} = \sum_{\substack{i \ge 2\\  \text{\rm \  
even}}} n_i\qquad\text{
and}\qquad
n^+_{\text{\rm \ odd}} = \sum_{\substack{i \ge 3\\  \text{\rm \  
odd}}} n_i.$$
Then  $\ZZ_{i,j}(\bold x)_{\rm red}$ is irreducible if and only if
$$\max(n^+_{\text{\rm \ even}}, n^+_{\text{\rm \ odd}}) \le 1.$$
Moreover its dimension is then
$\frac12(n_1+n_{\rm odd}^+-1)$.

\noindent (iv) {\rm (zero-dimensional case)} $\ZZ_{i,j}(\bold x)_{\rm  
red}$ is of dimension zero if and only if  $\tilde T$ is $\OK$-equivalent to $\diag (1_{n-2}, p^a, p^b)$, where $0\leq a<b$ and  
where $a+b$ is odd. In this case, $\ZZ_{i,j}(\bold x)_{\rm red}$  
consists of a single point $\xi$, and  the length of the local ring $ 
\Cal O_{\ZZ_{i,j}(\bold x), \xi}$ is finite and given by
$${\rm length}_W(\Cal O_{\ZZ_{i,j}(\bold x),  \xi})= \frac12\,\sum_{l  
=0}^{a} p^l (a+b+1-2l)
\ .$$

\end{theo}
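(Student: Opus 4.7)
The plan is to translate the moduli-theoretic condition defining $\ZZ_{i,j}(\xx)$ into a lattice condition in the hermitian space $C$ via Dieudonn\'e theory, and then extract parts (i)--(iv) from the Vollaard--Wedhorn description of $\Cal N_{\red}$.

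A point $X \in \Cal N_{\red}(\FF)$ corresponds to a Dieudonn\'e lattice $M$ inside the polarized isocrystal $N$ of $\X$, and Vollaard's construction attaches to it a vertex lattice $\Lambda(X) \subset C$, with $X \in \Cal V(\Lambda)^o$ if and only if $\Lambda(X) = \Lambda$. The first key step is to verify that the extension criterion defining membership of $X$ in $\ZZ_{i,j}(\xx)$ is equivalent to the lattice inclusion $p^i x_r \in \Lambda(X)$ for every $r$. This should follow by comparing the Dieudonn\'e-theoretic extension criterion of \cite{RZ} with the intrinsic description of $\Lambda(X)$ in terms of the polarization and Frobenius on $N$. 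Granting this translation, one obtains
$$
\ZZ_{i,j}(\xx)_{\red} \;=\; \bigcup_{\substack{\Lambda \supset p^i \langle x_1,\dots,x_n \rangle_{\OK} \\ \Lambda \text{ of level } j}} \Cal V(\Lambda)^o,
$$
which is exactly part (i). Part (ii) then follows by identifying the maximal vertex $\Lambda_{\max}$ in the union: writing $\widetilde T = p^{2i-j}T(\xx)$, the largest admissible type is governed by the radical of $\red(\widetilde T)$, and one computes $t(\Lambda_{\max}) = r^0(\widetilde T)$ (adjusted by $1$ when odd-parity forces padding), giving dimension $[(r^0(\widetilde T)-1)/2]$.

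For part (iii), I would use the Jordan decomposition $\widetilde T = 1_{n_0} + p\,1_{n_1} + \dots + p^k 1_{n_k}$ to enumerate the maximal vertex lattices containing $p^i \langle x_1,\dots,x_n\rangle_{\OK}$. Each Jordan block $p^i 1_{n_i}$ with $i \geq 2$ introduces potential flexibility in enlarging the span, but the odd-parity constraint on vertex types groups these blocks according to the parity of $i$; maximal enlargements are forced and unique in the absence of competing blocks of the same parity, and branch otherwise. A direct case analysis yields both the irreducibility criterion $\max(n^+_{\text{even}}, n^+_{\text{odd}}) \leq 1$ and, in that case, the stated dimension $\frac{1}{2}(n_1 + n^+_{\text{odd}} - 1)$.

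The main obstacle is part (iv). Here $\widetilde T$ is $\OK$-equivalent to $\diag(1_{n-2}, p^a, p^b)$ with $0 \leq a < b$ and $a+b$ odd, the cycle reduces to a single point $\xi$, and one must compute the scheme-theoretic length of the local ring. My approach would be induction on $a$. For the base case $a = 0$, $\ZZ_{i,j}(\xx)$ presents as a transverse complete intersection of $n-1$ smooth divisors with the divisor attached to the block $p^b 1_1$, whose length at $\xi$ is $(b+1)/2$ from the analogue of the $n=1$ computation, matching the formula. For $a \geq 1$, I would replace $x_{n-1}$ by $p^{-1} x_{n-1}$ (which lowers the first small invariant from $a$ to $a-1$) and compare the two cycles; the difference is supported on strata intermediate in the Vollaard--Wedhorn filtration, whose scheme-theoretic contribution at $\xi$ can be evaluated via parts (i)--(iii) together with an explicit local analysis. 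Unfolding the recursion should reproduce the closed form $\frac{1}{2}\sum_{l=0}^a p^l(a+b+1-2l)$. The hardest aspect will be the scheme-theoretic bookkeeping on these intermediate strata, which requires Zink's theory of displays or explicit local deformation coordinates on $\Cal N$ to track the local rings through the successive comparisons.
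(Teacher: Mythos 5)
Your overall strategy for (i)--(iii) is the right one, but the key translation is stated incorrectly, and as stated it would derail the rest. The extension criterion for $x$ at a point $A\in\Cal D_j$ is \emph{not} $p^i x\in\L(A)$ but $p^i x\in p^{j+1}A^\vee$, equivalently $p^i x\in p^{j+1}\L(A)^\vee$ (the largest $\tau$-invariant lattice inside $p^{j+1}A^\vee$); since $p^{j+1}\L^\vee\subsetneq\L$ for any vertex of type $>0$, your condition ``$\L\supset p^i\langle x_1,\dots,x_n\rangle_{\OK}$'' is strictly weaker and your union in (i) includes spurious strata. All of the combinatorics in (ii)--(iii) hinges on the dual condition: the correct invariant is the quotient $D=pL^\vee/L$ with its induced nondegenerate $\kay/\OK$-valued form, vertices $\L$ with $L\subset p\L^\vee$ correspond to submodules $B\subset D$ with $pB\subset B^\perp\subset B$, and $t(\L)=\dim B/B^\perp$; from this one proves that every admissible vertex sits inside one of the maximal odd type $t_0$, and the irreducibility criterion comes from exhibiting two incompatible maximal vertices when $\max(n^+_{\rm even},n^+_{\rm odd})\ge 2$ versus an anisotropy argument (reduction to the rank-two block $\diag(p^a,p^b)$) when it is $\le 1$. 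Your ``parity of blocks'' heuristic points in the right direction but is not yet a proof.

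The genuine gap is in (iv). First, two local errors: replacing $x_{n-1}$ by $p^{-1}x_{n-1}$ multiplies $h(x_{n-1},x_{n-1})$ by $p^{-2}$, so it lowers $a$ by $2$, not $1$; and the base case is not a transverse intersection --- the length there is $(b+1)/2>1$ for $b>1$, and already computing it requires Gross-type lifting theory, not just smoothness of the divisors. More seriously, your inductive step defers precisely the hard content. One must first split off the unimodular part via the idempotent $\sum_{i\le n-2}y_i\circ y_i^*$ to reduce to $n=2$ on the two-dimensional deformation space $\Spf W[[t]]$ of $\bold X=\YYbar\times\YY$. The ``difference of cycles'' your recursion produces is then a \emph{quasi-canonical divisor} $\ZZ_s$ (image of $\Spf W_s$ under the lift $\OK\tt F_s$ of a quasi-canonical lifting of level $s$), and evaluating its intersection with the other divisor is a lifting problem for homomorphisms of $\OK\tt F_0$ into $\OK\tt F_s$ over $W_s/\pi_s^m$ --- this needs an extension of Gross's formula, not information about reduced strata, so (i)--(iii) cannot supply it. Moreover, to know the recursion terminates correctly one must prove that $\ZZ(y_1)=\sum_{s\le a\,{\rm even}}\ZZ_s$ and $\ZZ(y_2)=\sum_{s\le b\,{\rm odd}}\ZZ_s$ as divisors, with no further components; this is certified by comparing multiplicities along the special fiber against an equal-characteristic length computation (carried out via displays), which is an independent and essential input. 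Without these ingredients the ``explicit local analysis'' you invoke is the entire theorem.
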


The right hand side of the last identity can be expressed in terms of representation densities of hermitian forms. 
Recall that, for nonsingular hermitian matrices $S\in \Herm_m(\OK)$ and 
$T \in \Herm_n(\OK)$, with $m\ge n$,
the representation density $\a_p(S,T)$ is defined as 
 \begin{equation*}
\a_p(S,T) = \lim_{k\to\infty} (p^{-k})^{n(2m-n))} |A_{p^k}(S,T)|,
\end{equation*}
 where
$$A_{p^k}(S,T) =\{\ x\in M_{m,n}(\OK/p^k\OK)\mid S[x]\equiv T\mod p^k \ \},$$
with $S[x] = {}^tx S\s(x)$.
The density depends only on the $\GL_m(\OK)$-equivalence class of $S$ 
(resp. the  $\GL_n(\OK)$-equivalence class of $T$).
An explicit formula for $\a_p(S,T)$ has been given by Hironaka, \cite 
{hironaka}. For $r\ge 0$, let $S_r = \diag(S,1_{r})$. Then
$$\a_p(S_r,T) = F_p(S,T; (-p)^{-r})$$
for a polynomial $F_p(S,T;X)\in \Q[X]$, as can be seen immediately
from Hironaka's formula.

If $m=n$ and 
 $\ord(\det(S))+\ord(\det(T))$ is odd, then
$\a_p(S,T)=0$. In this case, we define the derivative of the  
representation density
$$\a'_p(S,T) = -\frac{\d}{\d X}F_p(S,T;X)\vert_{X=1}.$$

The right hand side of the identity in (iv) of Theorem \ref{mainintrothm} is now expressed
in terms of hermitian representation densities, as follows. 
\begin{prop}\label{dender}
Let $S= 1_n$ and $T=\diag(1_{n-2}, p^a, p^b)$ for $0\le a< b$ with  
$a+b$ odd.
Then $\a_p(S,T)=0$ and
$$\frac{\a_p'(S,T)}{\a_p(S,S)} =  \frac12\sum_{\ell=0}^{a} p^\ell (a 
+b-2\ell+1),$$
where
$$\a_p(S,S) =  \prod_{\ell=1}^{n}(1- (-1)^\ell p^{-\ell}).$$
\end{prop}

In this form, the  
formula in (iv) should hold for any nonsingular fundamental matrix, 
that is,  the relation between derivatives of representation densities and 
intersection multiplicities should continue to hold even in the case of 
improper intersections. More precisely:

\begin{conj}\label{conjecture}
Let  $\bold x=[x_1,\ldots , x_n]\in \VV^n$ be such that $\ZZ_{i,j} 
(\bold x)\neq \emptyset$ and such that the fundamental matrix $T=T 
(\xx)$ is nonsingular. Let $\tilde T=p^{2i-j}T$. Then 
$\ZZ_{i,j}(\bold x)$ is connected and 
$$\chi(\Cal O_{\ZZ_{i,j}(x_1)}\tt^{\mathbb L}\ldots \tt^{\mathbb L} 
\Cal O_{\ZZ_{i,j}(x_n)})=\frac{\alpha'_p(S, \tilde T)}{\alpha_p(S, S)} 
\ .$$
\end{conj}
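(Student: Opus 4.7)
The plan is to bootstrap from the zero-dimensional case already covered by Theorem \ref{mainintrothm}(iv) and Proposition \ref{dender}, treating the connectedness statement and the Euler characteristic identity separately. The first is essentially combinatorial in nature, whereas the second requires a genuine deformation argument comparing both sides under controlled modifications of the tuple $\bold x$.

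For connectedness, Theorem \ref{mainintrothm}(i) asserts that $\ZZ_{i,j}(\bold x)_{\rm red}$ is the union of Vollaard--Wedhorn strata $\Cal V(\L)^o$ indexed by a finite, explicitly described set $\Cal L(\bold x)$ of vertex lattices of level $j$. By property (a) preceding the theorem, the closure relations are governed by inclusion of vertex lattices, so it suffices to show that $\Cal L(\bold x)$ is ``convex'' in the following sense: any two maximal lattices of $\Cal L(\bold x)$ can be joined by a zig-zag of inclusions staying inside $\Cal L(\bold x)$. Since membership of $\Cal L(\bold x)$ is an integrality condition involving each $x_r$, one should exhibit a common sublattice of any two given elements that still satisfies these integrality conditions; a retraction argument on the building $\Cal B({\rm U}(C))$, combined with the positive-definiteness coming from the nonsingularity of $T(\bold x)$ on a suitable subspace, should yield the convexity.

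For the Euler characteristic identity, the natural strategy is induction on the Jordan invariants of $\tilde T$. The base case $\tilde T = \diag(1_{n-2}, p^a, p^b)$ with $0 \le a < b$ and $a+b$ odd is known; in general, I would deform $\bold x$ within $\VV^n$ by a small perturbation that reduces a Jordan block size by one and compare how both sides change. Analytically, since $F_p(S, T; X) \in \Q[X]$ has coefficients that are polynomial functions of the entries of $T$ via Hironaka's formula, the difference $\alpha'_p(S, \tilde T) - \alpha'_p(S, \tilde T^\circ)$ can be written recursively in terms of densities attached to strictly smaller matrices. Geometrically, the formal family $\{\ZZ_{i,j}(\bold x_s)\}$ over $\Spf W[[s]]$ is flat in the parameter $s$ by the divisor property of each $\ZZ_{i,j}(x_r)$, so that the Euler characteristic of the derived tensor product is constant up to specifiable contributions from horizontal components that become non-generic at $s=0$. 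Matching the two decompositions should propagate the identity from the zero-dimensional case to arbitrary $\tilde T$.

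The main obstacle is the local-to-global computation of the derived tensor product in the improper intersection case. At a generic point of a stratum $\Cal V(\L)^o \subset \ZZ_{i,j}(\bold x)_{\rm red}$, the ambient formal scheme $\Cal N_j$ is formally smooth, but each divisor $\ZZ_{i,j}(x_r)$ typically meets $\Cal V(\L)^o$ non-properly; the contribution of $\Cal V(\L)^o$ to $\chi$ must be extracted from a local Koszul complex on a regular sequence built out of the defining equations of the $\ZZ_{i,j}(x_r)$, and the excess part on each stratum needs to be identified with a suitable ``local'' representation density, presumably indexed by the part of the lattice $\L$ on which the hermitian form $T(\bold x)$ degenerates. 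Once these local contributions are in hand, the global identity should follow by summation over $\Cal L(\bold x)$ and comparison with the Jordan-block expansion of $\alpha'_p(S, \tilde T)/\alpha_p(S, S)$ afforded by Hironaka's formula.
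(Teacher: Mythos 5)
This statement is labeled a \emph{Conjecture} in the paper; the authors do not prove it. They establish it only in the zero-dimensional case (via Theorem~\ref{mainintrothm}(iv) and Proposition~\ref{dender}, together with Terstiege's observation \cite{terstiege2} that the higher $\mathrm{Tor}$-terms vanish there), and they explicitly pose the connectedness of $\Cal W_{i,j}(L)$ as an open question in the remark following Corollary~\ref{point}. So there is no ``paper's own proof'' to compare against: you are proposing a strategy for an open problem, and the question is whether your strategy holds up.

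It does not, as written. For connectedness, you assert that a ``retraction argument on the building combined with positive-definiteness'' will produce a common vertex lattice interpolating between any two maximal elements of $\Cal L(\xx)$, but you give no construction of the zig-zag and do not address the actual combinatorial difficulty: the integrality condition $p^i L \subset p^{j+1}\L^\vee$ is preserved under \emph{intersection} of vertex lattices only after checking that the intersection of two vertices is again a vertex of level $j$ (false in general), so the ``convexity'' you invoke is not automatic. For the Euler characteristic, the argument rests on three unestablished claims. First, the family $\{\ZZ_{i,j}(\xx_s)\}$ over $\Spf W[[s]]$ is not shown to exist, let alone to be flat; perturbing $\xx$ in $\VV^n$ moves you through tuples with varying fundamental matrix, and there is no reason the corresponding cycles fit into a flat family over a one-parameter base. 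Second, the claim that $\chi$ is ``constant up to specifiable contributions from horizontal components'' is precisely what needs to be proved and is the entire content of the conjecture in the improper case; invoking it is circular. Third, the local Koszul argument you describe requires the local equations of the $\ZZ_{i,j}(x_r)$ to form a regular sequence, which is exactly what \emph{fails} when the intersection is improper (i.e.\ when $\dim\ZZ_{i,j}(\xx)_{\rm red}>0$), so the very situation the conjecture extends Theorem~\ref{mainintrothm}(iv) to is the one where your proposed local computation breaks down. You acknowledge this as ``the main obstacle'' but offer no way past it. In short, the sketch restates the difficulty rather than resolving it, and the heuristic induction on Jordan type has no base for the inductive step.
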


	The Euler-Poincar\'e characteristic appearing here is indeed finite,  
since it can be shown that $\Cal O_{\ZZ_{i,j}(\bold x)}$ is  
annihilated by a power of $p$. In the case that $\ZZ_{i,j}(\bold x)_ 
{\rm red}$ is of dimension zero, it can be shown that there are no  
higher Tor-terms on the LHS of the above identity \cite{terstiege2},  
so that indeed the statement (iv) of the above theorem confirms the  
conjecture in this case.  Note that the analogue of Conjecture~\ref{conjecture}
in the case of improper intersections of cycles on the Drinfeld space 
was proved in \cite{KRinventiones}. The case of improper intersections of arithmetic Hirzebruch-Zagier cycles is considered by U.~Terstiege in \cite{terstiege} and \cite{terstiege2}.\footnote{Since this paper was submitted, Terstiege has proved the above conjecture in the case $n=3$, cf.~\cite{terstiege3}.}

The layout of the paper is as follows. In section 2, we define the  
moduli space $\Cal N$ and recall the results of Vollaard and Wedhorn  
concerning its structure. In section 3, we introduce special cycles $ 
\ZZ_{i,j}(\bold x)$ on $\Cal N$. In section 4, we prove the statements  
on $\ZZ_{i,j}(\bold x)_{\rm red}$ in our main theorem. The rest of  
the paper is concerned with the determination of  the length of $\Cal  
O_{\ZZ_{i,j}(\bold x),  \xi}$ in the situation of (iv) of the main  
theorem. In section 5, this problem is reduced to a deformation  
problem on $2$-dimensional formal $p$-divisible groups. In section 6,  
we introduce the analogue in our context of Gross's quasi-canonical  
divisors. In section 7, we solve a lifting problem of homomorphisms  
analogous to the one solved by Gross in the classical case. In  
section 8, we use the results of the previous two sections to solve  
the deformation problem of section 5. Finally, in section 9, we relate  
the RHS in (iv) of the main theorem to representation densities.

There are two important ingredients of algebraic-geometric nature  
that are used in our proofs. The first is the results of I. Vollaard  
and T. Wedhorn on the structure of $\Cal N_{\rm red}$, cf. Theorem  
\ref{alggeo}.  The second is the determination,  due to Th. Zink, of  
the length of a certain specific deformation space in equal  
characteristic, cf. Proposition \ref{length}. Both are based on
 Zink's theory of  
displays and their windows, \cite{displays, windows}.

We thank I. Vollaard, T. Wedhorn, and Th. Zink for their  
contributions and for helpful discussions concerning them. We also thank B.~Howard, U.~Terstiege and the referee for their remarks
on the text.
We also  
gratefully acknowledge the hospitality of the Erwin-Schr\"odinger  
Institute in Vienna in 2007, where part of this work was done.

\section{Structure of the moduli space $\Cal N$}

In this section we recall some facts about the moduli space $\Cal N$  
from \cite{vollaard}.
We write $\kay=\Q_{p^2}$ for the unramified quadratic extension of $ 
\Q_p$
and $\OK=\Z_{p^2}$ for its ring of integers. We also write $\FF = \bar 
\F_p$ and $W=W(\FF)$ for its ring of Witt vectors.
There are two embeddings $\ph_0$ and $\ph_1=\ph_0\circ \s$ of $\kay$  
into $W_\Q=W\tt_\Z\Q$.

\subsection{Definition of the moduli space}\label{DefofMod}
Let $(\X,\iota)$ be a fixed supersingular $p$-divisible group of  
dimension $n$ and height $2n$ over $\FF$
with an action $\iota:\OK \rightarrow \End(\X)$ satisfying the {\it  
signature condition
$(r, n-r)$},
\begin{equation}\label{detcond}
\text{\rm char}(\iota(\a),\Lie\, \X)(T) = (T-\ph_0(\a))^{r}(T-\ph_1 
(\a))^{n-r}\in \FF[T].
\end{equation}
Let $\l_{\X}$ be a $p$-principal polarization of $\X$ for which the  
Rosati involution $*$
satisfies $\iota(\a)^*=\iota(\a^\s)$.  The data $(\X,\iota,\l_\X)$ is  
unique up to isogeny.
%\marginpar{Check this.}

Let ${\rm Nilp}_W$ be the category of $W$-schemes on which $p$ is  
locally nilpotent.
The functor $\Cal N =\Cal N(r,n-r)$  associates to a scheme 
$S \in  
{\rm Nilp}_W$ the set of isomorphism
classes of data $(X,\iota,\l_X, \rho_X)$. Here $X$ is a $p$-divisible  
group over $S$ with $\OK$-action $\iota$
satisfying the signature condition $(r, n-r)$,  and  $\l_X$ is a 
$p$-principal polarization of $X/S$, 
  such that the Rosati involution
determined by $\l_X$ induces the Galois involution $\s$ on $\OK$.   
Finally,
$$\rho_X: X \times_{S}\bar S \rightarrow \X\times_{\F}\bar S$$
is an $\OK$-linear quasi-isogeny such that $\rho_X^\vee\circ\l_\X\circ \rho_X$ is a
$\Q_p^\times$-multiple of $\l_X$ in \hfb
$\Hom_{\OK}(X,X^\vee)\tt_\Z\Q$. Here, two such data $(X,\iota,\l_X, \rho_X)$ and
$(X',\iota',\l_{X'}, \rho_{X'})$ are said to be isomorphic if there is a $\OK$-linear isomorphism 
$\alpha: X\to X'$ with $(\alpha\times_W\F)\circ \rho_X=\rho_{X'}$, such that 
$\alpha^\vee\circ \lambda_{X'}\circ \alpha$ is a $\Z_p^\times$-multiple of $\lambda_X$.
This functor is represented by a separated formal scheme $\Cal N$,  
locally formally of finite type over $W$,
\cite{RZ}. Furthermore, $\Cal N$ is formally smooth of dimension $r(n- 
r)$ over $W$, i.e.,   all completed local rings of points in $\Cal N 
(\FF)$ are isomorphic to $W[[t_1,\ldots ,t_{r(n-r)}]]$. Moreover, there is a disjoint sum decomposition
$$\Cal N = \coprod_{i\in \Z}\Cal N_i\ ,$$
where $\Cal N_i$ is the formal subscheme where the quasi-isogeny $ 
\rho_X$ has height $ni$. 

We next review the structure of the underlying reduced subscheme $ 
\Cal N_{{\rm red}}$  of $\Cal N$, following \cite{vollaard}. Note  
that $\Cal N_{{\rm red}}(\FF)=\Cal N(\FF)$.
Let $\MM$ be the (covariant) Dieudonn\'e module of $\X$ and let $N=  
\MM\tt_\Z\Q$ be the associated isocrystal.
Then $N$ has an action of $\kay$ and a skew-symmetric $W_\Q$-bilinear  
form
$\gs{}{}$ satisfying
\begin{equation}\label{skew}
\gs{Fx}{y} = \gs{x}{Vy}^\s,
\end{equation}
and with
$\gs{\a x}{y}=\gs{x}{\a^\s y}$,
for $\a\in \kay$.

\begin{prop} {\rm (\cite{vollaard}, Lemmas 1.4, 1.6)}
  There is a bijection between $\Cal N_i(\FF)$
and the set of  $W$-lattices $M$ in $N$ such that $M$ is  stable  
under $F$, $V$,
and $\OK$, and with the following properties:
\begin{equation}\label{detcond2}
\text{\rm char}(\a,M/VM)(T) = (T-\ph_0(\a))^{r}(T-\ph_1(\a))^{n-r}\in  
\F[T],
\end{equation}
and
\begin{equation}
M = p^i M^\perp\ ,
\end{equation}
where
\begin{equation}
M^\perp = \{\, x\in N\mid \gs{x}{M} \subset W\, \}. 
\end{equation}
\end{prop}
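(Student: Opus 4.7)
The plan is to translate the moduli data $(X,\iota,\l_X,\rho_X)$ into Dieudonn\'e-theoretic linear algebra via (covariant) Dieudonn\'e theory over $\FF$, which is an equivalence between the category of $p$-divisible groups (resp.\ quasi-isogenies) over $\FF$ and the category of Dieudonn\'e modules with $F,V$ (resp.\ $W_\Q$-linear morphisms of isocrystals). Given a point of $\Cal N_i(\FF)$, I would set $M=M(X)$ to be its Dieudonn\'e module, and use the quasi-isogeny $\rho_X$ to identify $M(X)\tt\Q$ with $N$, so that $M$ becomes a $W$-lattice in $N$.

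From here I would check the four properties of $M$ separately. Stability under $F$ and $V$ is built into the notion of Dieudonn\'e module. Stability under $\OK$ is the translation of the $\OK$-action $\iota$ on $X$ into the induced action on the Dieudonn\'e module; compatibility with the identification $M\tt\Q=N$ holds because $\rho_X$ is $\OK$-linear. The condition on the characteristic polynomial on $M/VM$ is the image in Dieudonn\'e theory of the signature condition (\ref{detcond}), using the standard identification $\Lie(X)\cong M/VM$ of $\OK\tt_\Z\FF$-modules. Finally, the polarization $\l_X$, being $p$-principal, produces a perfect $W$-bilinear alternating form on $M$ satisfying the Frobenius--Verschiebung compatibility (\ref{skew}); the condition that $\rho_X^\vee\l_\X\rho_X$ be a $\Q_p^\times$-multiple of $\l_X$ means that, after the identification $M\tt\Q=N$, this form is a $\Q_p^\times$-multiple of $\gs{\,}{\,}$. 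The precise scalar is pinned down by the height $ni$ of $\rho_X$, and a short calculation comparing duals shows that this is equivalent to the lattice relation $M=p^iM^\perp$.

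For the reverse direction, I would start with a $W$-lattice $M\subset N$ with the four listed properties. By the equivalence of categories of Dieudonn\'e theory, the pair $(M,F,V)$ comes from a unique $p$-divisible group $X$ over $\FF$; the $\OK$-action on $M$ lifts uniquely to $\iota$ on $X$; the signature on $\Lie(X)=M/VM$ is built in by the determinantal hypothesis; the self-duality $M=p^iM^\perp$ together with (\ref{skew}) produces a $p$-principal polarization $\l_X$ whose Rosati involution acts on $\OK$ by $\s$; and the inclusion $M\subset N$ gives, by Dieudonn\'e theory applied to morphisms of isocrystals, a quasi-isogeny $\rho_X:X\to\X$. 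The normalization $M=p^iM^\perp$ guarantees that $\rho_X^\vee\l_\X\rho_X\in\Q_p^\times\l_X$ and that the height of $\rho_X$ is $ni$, i.e.\ the resulting quadruple lies in $\Cal N_i(\FF)$.

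The two constructions are manifestly mutually inverse, so the only real bookkeeping is in step two of the forward direction: verifying that the normalization of the polarization on $\X$ (which fixes the pairing $\gs{\,}{\,}$ on $N$) combined with the height of $\rho_X$ being exactly $ni$ yields the clean relation $M=p^iM^\perp$. This is the main, and essentially only, obstacle; everything else is a formal application of covariant Dieudonn\'e theory and of the results of \cite{RZ}, and the statement is a special case of the general dictionary between points of Rapoport--Zink spaces and lattice chains in the associated isocrystal.
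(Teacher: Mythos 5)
The paper does not give its own proof of this proposition; it is quoted verbatim from Vollaard's Lemmas 1.4 and 1.6, and so there is nothing in the paper to compare your argument against line by line. That said, your reconstruction follows exactly the route Vollaard takes: covariant Dieudonn\'e theory establishes the dictionary between quasi-isogeny classes of $p$-divisible groups over $\FF$ and $W$-lattices stable under $F$ and $V$ in the fixed isocrystal $N$; the $\OK$-stability and the determinant condition (\ref{detcond2}) on $M/VM\cong\Lie(X)$ are the images of the $\OK$-action and the signature condition; and the $p$-principal polarization together with the condition that $\rho_X^\vee\l_\X\rho_X\in\Q_p^\times\l_X$ and the height normalization becomes the lattice identity $M=p^iM^\perp$.

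You correctly flag the one place where a genuine computation is needed, and it is worth recording it explicitly since it is also what makes the bijection respect the decomposition $\Cal N=\coprod_i\Cal N_i$. The polarization $\l_X$ being $p$-principal means the alternating form it induces on $M$ is perfect; the quasi-isogeny condition says this form equals $c\cdot\gs{\,}{\,}|_M$ for some $c\in\Q_p^\times$, which, after absorbing a unit into the $\Z_p^\times$-ambiguity in $\l_X$, one may take to be $c=p^{-i'}$ for some integer $i'$. Perfectness of $p^{-i'}\gs{\,}{\,}$ on $M$ is exactly $M=p^{i'}M^\perp$. To see $i'=i$, compare discriminants: $\gs{\,}{\,}$ is perfect on $\MM$, so if $g\in\GL(N)$ carries a $W$-basis of $\MM$ to one of $M$, then $\det({}^tg\gs{\,}{\,}g)=p^{2ni'}\cdot(\text{unit})$, i.e.\ $v_p(\det g)=ni'$; but $v_p(\det g)$ is precisely the height of $\rho_X$, which on $\Cal N_i$ is $ni$. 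This also settles, in the reverse direction, that the quasi-isogeny produced from $M\hookrightarrow N$ lands in the correct component. With that supplied, your proposal is a correct and complete rendering of the standard argument.
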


The isomorphism $\OK\tt_{\Z_p} W \isoarrow W\oplus W$ yields a  
decomposition
$M = M_0\oplus M_1$  into rank $n$ submodules, and $F$ and $V$ have  
degree $1$ with respect to this decomposition.
Also $M_0$ and $M_1$ are isotropic with respect to $\gs{}{}$.
Moreover, the determinant condition (\ref{detcond2}) is equivalent to  
the chain condition
\begin{equation}\label{chain}
p M_0 \subover{n-r} FM_1 \subover{r} M_0,
\end{equation}
or, equivalently,
$$p M_1 \subover{r} FM_0 \subover{n-r} M_1.$$
(Here the numbers above the inclusion signs indicate the lengths of  
the respective cokernels.) Note that $M_0= M\cap N_0$ and $M_1= M\cap  
N_1$ for the analogous decomposition
$N=N_0\oplus N_1$.

Since the isocrystal $N$ is supersingular, the operator $\tau= V^{-1} 
F = pV^{-2}$ is a $\s^2$-linear
automorphism of degree $0$ and has all slopes $0$.
Let
$$C = N_0^{\tau=1}$$
be the space of $\tau$-invariants, so that $C$ is a $n$-dimensional $ 
\Q_{p^2}$-vector space and
$$N_0= C\tt_{\Q_{p^2},\ph_0}W_\Q.$$
For $x$ and $y\in N$, let
$$\hgs{x}{y} = \delta^{-1}\gs{x}{Fy},$$
where
$\delta\in \Z_{p^2}^\times$ with $\delta^\s=-\delta$ is fixed once  
and for all. Note that the corresponding form in \cite{vollaard}
is taken without the scaling by $\delta$.
By (\ref{skew}),
$$\hgs{x}{y} = \hgs{y}{\tau^{-1}(x)}^\s,$$
and hence
$$\hgs{\tau(x)}{\tau(y)} = \hgs{x}{y}^{\s^2}.$$
Thus, $\hgs{\,}{}$ defines a $\Q_{p^2}$-valued hermitian form on $N^ 
{\tau=1}$
and, in particular, on $C$.  Note that, since the polarization form $ 
\gs{}{}$ is non-degenerate on
$N$ with $N_0$ and $N_1$ isotropic subspaces, and since $FN_0=N_1$,  
the hermitian form
$\hgs{\,}{}$ is non-degenerate on $C$.

For a $W$-lattice $L\subset N_0$, let
\begin{equation}
L^\vee=\{\,x\in N_0\mid \hgs{x}{L}\subset W\,\} = (FL)^\perp.
\end{equation}
Then $(L^\vee)^\vee=\tau(L)$.
\begin{lem}\label{perpslemma}
For an $\OK$-stable  Dieudonn\'e module $M = M_0\oplus M_1$,
$$M= p^i M^\perp \quad \iff \quad FM_1 = p^{i+1}M_0^\vee.$$
\end{lem}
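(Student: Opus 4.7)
The plan is to decompose both sides along the grading $N = N_0\oplus N_1$ and translate between $M^\perp$ (defined via the symplectic form $\langle\,\cdot\,,\,\cdot\,\rangle$) and the hermitian dual $M_0^\vee$ (defined via $\{\,\cdot\,,\,\cdot\,\}$) using the Frobenius identity \eqref{skew}. First I would use the isotropy of $N_0$ and $N_1$ to write
\begin{equation*}
(M^\perp)_0 = \{\,x\in N_0 : \langle x,M_1\rangle\subset W\,\},\qquad (M^\perp)_1 = \{\,y\in N_1 : \langle y,M_0\rangle\subset W\,\},
\end{equation*}
and to observe that these two lattices determine one another by non-degeneracy of the induced perfect pairing $N_0\times N_1\to W_\Q$. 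Consequently $M = p^iM^\perp$ is equivalent to the single equation $M_1 = p^i(M^\perp)_1$.

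Next I would rewrite $M_0^\vee$ in terms of $\langle\,\cdot\,,\,\cdot\,\rangle$. Because $\delta\in W^\times$ and $W$ is $\s$-stable, the definition $\{x,y\} = \delta^{-1}\langle x,Fy\rangle$ gives
\begin{equation*}
M_0^\vee = \{\,x\in N_0 : \langle x, FM_0\rangle\subset W\,\}.
\end{equation*}
Applying skew-symmetry together with $\langle Fm,x\rangle = \langle m,Vx\rangle^\s$ for $m\in M_0$ and $x\in N_0$, this simplifies to $M_0^\vee = V^{-1}\bigl((M^\perp)_1\bigr)$, where $V$ is viewed as an automorphism of the isocrystal $N$. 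The identity $V = pF^{-1}$ then yields $F\bigl((M^\perp)_1\bigr) = p\,M_0^\vee$.

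Combining the two reductions, applying the isomorphism $F$ to the equation $M_1 = p^i(M^\perp)_1$ produces $FM_1 = p^i F\bigl((M^\perp)_1\bigr) = p^{i+1}M_0^\vee$, and the argument is reversible since $F$ is an isomorphism on $N$. The proof is essentially formal; the only real pitfall is the bookkeeping with the Galois twist $\s$ and the sign coming from skew-symmetry, both of which are harmless because $W$ and the unit $\delta$ are preserved by $\s$.
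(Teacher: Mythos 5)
Your proof is correct and takes essentially the same route as the paper: both reduce $M = p^iM^\perp$ to a statement about the graded pieces and then translate between $\langle\,,\,\rangle$ and $\{\,,\,\}$ using $VF = p$ together with the relation $\langle Fx,y\rangle = \langle x,Vy\rangle^\s$. The paper phrases the intermediate step as the single pairing identity $\langle M_0,M_1\rangle = p^iW$ and applies $F$ to both slots, whereas you track $(M^\perp)_0$ and $(M^\perp)_1$ and identify $M_0^\vee$ with $V^{-1}\bigl((M^\perp)_1\bigr)$, but the two presentations are interchangeable.
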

\begin{proof}
The condition $M= p^i M^\perp$ is equivalent to the condition
\begin{equation}\label{dual1}
\gs{M_0}{M_1}=p^i W.
\end{equation}
Since $VF=FV=p$, this is, in turn, equivalent to
$\gs{FM_1}{FM_0} = p^{i+1}W$,
by (\ref{skew}).  This last identity can be rewritten as
\begin{equation}
\hgs{FM_1}{M_0}= p^{i+1}W,
\end{equation}
as claimed.
\end{proof}

Note that the lattice $M_0$ in $N_0$ determines the lattice $M_1$
in $N_1$, either by (\ref{dual1}) or by the condition $FM_1 = p^{i+1} 
M_0^\vee$.

\begin{prop}\label{inken1}{\rm (\cite{vollaard}, Prop. 1.11, Prop.  
2.6 a))} There is a bijection between $\Cal N_i(\FF)$ and the set
of $W$-lattices
$$
\Cal D_i= \Cal D_i(C) = \big\{\ A\subset N_0 \ \big\vert\
p^{i+1}A^\vee \subover{r} A\subover{n-r} p^i A^\vee\ \big\},
$$
given by mapping the Dieudonn\'e lattice $M= M_0\oplus M_1$
to the $W$-lattice $A=M_0$ in $N_0$.  Under this correspondence
$$FM_1 = p^{i+1}A^\vee.$$
\end{prop}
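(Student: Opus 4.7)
The plan is to combine the preceding proposition with Lemma~\ref{perpslemma}. The lemma reformulates the polarization condition $M = p^i M^\perp$ as $FM_1 = p^{i+1}M_0^\vee$, and since $F : N_1 \to N_0$ is a $\s$-linear bijection on the isocrystal, this relation recovers $M_1$ uniquely from $M_0$. The natural candidate map is therefore $M \mapsto A := M_0$, with inverse sending $A \in \Cal D_i(C)$ to the pair $M_0 = A$, $M_1 = F^{-1}(p^{i+1}A^\vee) \subset N_1$. To establish the bijection one must translate each of the conditions of the preceding proposition into a statement about $A$, and conversely check that the reconstructed $M$ satisfies them all.

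First, substituting $FM_1 = p^{i+1}A^\vee$ into the chain condition $pM_0 \subover{n-r} FM_1 \subover{r} M_0$ gives
\begin{equation*}
p^{i+1}A^\vee \subover{r} A \subover{n-r} p^i A^\vee,
\end{equation*}
which is precisely the condition defining $\Cal D_i(C)$. This handles both directions of the bijection for the chain and polarization conditions, and the final displayed identity $FM_1 = p^{i+1}A^\vee$ holds by construction. The $\OK$-stability is automatic: the action of $\OK$ on $N_i$ factors through the $W$-action, and $M_0$, $M_1$ are by construction $W$-lattices.

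The remaining work is to verify $F$- and $V$-stability of the reconstructed $M = M_0 \oplus M_1$. I would use the identity $F^2 = p\tau$ on $N_0$, which follows from $VF = p$ and $\tau = V^{-1}F$. Then $FM_0 \subset M_1$ amounts to $F^2 A \subset p^{i+1}A^\vee$, i.e.\ $\tau A \subset p^i A^\vee$; similarly $VM_1 \subset M_0$ unwinds, via $V = pF^{-1}$ and $F^{-2} = p^{-1}\tau^{-1}$ on $N_0$, to $p^{i+1}A^\vee \subset \tau A$. The key observation is that $\tau$ commutes with the duality $(\cdot)^\vee$, which follows at once from $\hgs{\tau x}{\tau y} = \hgs{x}{y}^{\s^2}$. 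Using this together with $(L^\vee)^\vee = \tau L$, dualizing the two inclusions $A \subset p^i A^\vee$ and $p^{i+1}A^\vee \subset A$ of the chain condition produces exactly the required containments $\tau A \subset p^i A^\vee$ and $p^{i+1}A^\vee \subset \tau A$. This is the only real step requiring calculation, and it is the main obstacle; once it is in hand, the proposition follows.
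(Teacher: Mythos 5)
Your proof is correct, and it assembles exactly the ingredients the paper sets up for this purpose (the paper itself defers the proof to \cite{vollaard}): Lemma~\ref{perpslemma} to encode the polarization condition, the equivalence of the determinant condition with the chain condition, and the identity $(L^\vee)^\vee=\tau(L)$ together with $F^2=p\tau$ to recover $F$- and $V$-stability of the reconstructed $M$. In particular your reduction of $FM_0\subset M_1$ and $VM_1\subset M_0$ to $\tau A\subset p^iA^\vee$ and $p^{i+1}A^\vee\subset \tau A$, obtained by dualizing the two inclusions defining $\Cal D_i$, is precisely the intended argument.
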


\begin{rem} As observed in \cite{vollaard}, Lemma~1.16, for an $\OK$- 
lattice $A$ in $C$ with
$$pA^\vee \subover{r} A \subover{n-r} A^\vee,$$
there is a $\OK$-basis for $A$ for which the hermitian form $\hgs{}{} 
$ has matrix
$\diag(1_r, p 1_{n-r})$.
\end{rem}

\begin{rem}\label{rem10case} We record the two simplest examples for  
later use.  We first consider the case of signature $(1,0)$. Let us call $\YY$ the base point used to define $\mathcal N$ above. In this case
we have $C = \Q_{p^2}\cdot 1_0$ with hermitian form given by
$\hgs{1_0}{1_0} =  1$, and
$N_0 = W_\Q\cdot1_0$.
Moreover, $\Cal N_i(\F) \simeq \Cal D_i$ is empty for $i$ odd, and
$\Cal N_{2i}(\F) \simeq \Cal D_{2i}$ consists of a single point
corresponding to the $W$-lattice $A=p^i W\cdot 1_0$ in $N_0$, where  
$A^\vee = p^{-i}W\cdot 1_0$.
Let $\MM^0= \MM^0_0+\MM^0_1$ be the Dieudonn\'e  
module of $\YY$. Then,
$\MM^0_0=W\cdot1_0$,  $\MM^0_1=W\cdot1_1$,  $F 1_1 = p1_0$,  $F  
1_0=1_1$, and the polarization $\gs{\ }{\ }^0$ is given by
\begin{equation}
\gs{1_0}{1_1}^0 = \delta.
\end{equation}
Note that, on $N_0=\MM^0\tt_\Z\Q$,  $\hgs{1_0}{1_0} = \delta^{-1}\gs 
{1_0}{F1_0}^0 =1$, as required. Also note that $\End(\YY)$ can be  
identified with the ring of integers
$O_D$ in the quaternion division algebra over $\Q_p$, and that the  
endomorphism
\begin{equation}
\Pi :\ 1_0\mapsto p1_1, \ 1_1\mapsto 1_0
\end{equation}
is a uniformizer of $O_D$. \hfb
Now $\YY$ is a formal $\OK$-module over $\FF$. Hence there is a  
unique lift $Y$  of $\YY$ to $W$ as a formal $\OK$-module. In  
particular,  $\Cal N_{2i}=\Spf\ W$ for every $i$.

In the case of signature $(0,1)$,
again $C = \Q_{p^2}\cdot \bar1_0$ but now with hermitian form given by
$\hgs{\bar1_0}{\bar1_0} =  p$.  Moreover, $\Cal N_i(\F) \simeq \Cal  
D_i$ is empty for $i$ odd, and
$\Cal N_{2i}(\F) \simeq \Cal D_{2i}$ consists of a single point
corresponding to the $W$-lattice $A=p^i W\bar 1_0$ in $N_0$, where $A^ 
\vee = p^{-i-1}W\bar 1_0$.
We write  $\YYbar$ for the $p$-divisible group over $\FF$  
corresponding to the unique point of
$\Cal N_0(\F)$, and let $\MMbar^0= \MMbar^0_0+\MMbar^0_1$ be its  
Dieudonn\'e module. Then,
$\MMbar^0_0=W\cdot\bar1_0$,  $\MMbar^0_1=W\cdot\bar1_1$,  $F \bar1_1  
=  \bar1_0$,  $F \bar1_0=p\bar1_1$, and the polarization is given by
\begin{equation}
\gs{\bar1_0}{\bar1_1}^0 = \delta.
\end{equation}
On $\bar N_0=\MMbar^0\tt_\Z\Q$,  we have $\hgs{\bar1_0} 
{\bar1_0} = \delta^{-1}\gs{\bar1_0}{F\bar1_0}^0 =p$. 

Note that the formal $\OK$-module $\YYbar$ is obtained from $\YY$ by  
identifying the underlying $p$-divisible groups, but changing the $\OK 
$-action by precomposing it with the Galois automorphism $\sigma$ of $ 
\OK$. On the level of Dieudonn\'e modules,
this identification is given by switching the roles of $\MM^0_0$ and $ 
\MM^0_1$. In particular, the canonical lift $\bar Y$ of $\YYbar$ to $W$ is  
isomorphic to the canonical lift $Y$, with the conjugate $\OK$-action.

\end{rem}

\subsection{The case of signature $(1,n-1)$}
 From now on, we assume that $r=1$ so that our signature is $(1,n-1)$.

The structure of $\Cal D_i$ is best described in terms of strata  
associated to vertices
of the building for the unitary group $\text{\rm U}(C)$ of the  
hermitian space $(C$, $\hgs{}{})$.
Recall \cite{vollaard} that a vertex of level
  $i$ is a $\tau$-invariant $W$-lattice $\L$ in $N_0$
with
$$p^{i+1}\L^\vee\subover{t} \L\subset p^i\L^\vee.$$
Here we are identifying lattices in $C$ with $\tau$-invariant lattices
in $N_0$. 
The type of a vertex $\L$ is the index $t=2d+1$ of $p^{i+1}\L^\vee$  
in $\L$ (which is always an odd integer between $1$ and $n$).

\begin{lem}\label{zink} {\rm (quantitive version of Zink's Lemma, cf.  
\cite{vollaard}, Lemma 2.2.)}
For $A\in \Cal D_i$, there exists a minimal $d$, $0\le d\le \frac{n-1} 
2$, such that
the lattice
$$\L =\L(A):= A + \tau(A)+\dots+\tau^d(A)$$
is $\tau$-invariant. Then $\L$ is a vertex of level $i$, i.e., 
$$p^{i+1}\L^\vee \subseteq \L\subseteq p^i\L^\vee\ . $$
Moreover
$$p^{i+1}\L^\vee \subseteq p^{i+1}A^\vee \subover{1} A\subseteq \L 
\subseteq p^i\L^\vee,$$
and the index of $p^{i+1}\L^\vee$ in $\L$, the type of $\L$, is $2d+1$.
\end{lem}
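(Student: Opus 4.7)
The plan is to verify in sequence the four assertions: existence of a minimal $d$, the chain of inclusions, the type formula, and the bound $d \le (n-1)/2$. Set $\L_d := A + \tau(A) + \cdots + \tau^d(A)$. This is an ascending chain of $W$-lattices in $N_0$, and the qualitative Zink's Lemma (\cite{vollaard}, Lemma~2.2) provides a finite stabilization index. At the minimal such $d$, the condition $\L_{d+1} = \L_d$ amounts to $\tau^{d+1}(A) \subseteq \L_d$, which gives $\tau(\L_d) \subseteq \L_d$. Since $\tau$ has trivial Newton polygon on $N_0$ and so preserves the volume of $W$-lattices, this forces $\tau(\L_d) = \L_d$.

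The chain $p^{i+1}\L^\vee \subseteq p^{i+1}A^\vee \subover{1} A \subseteq \L$ is immediate: the inclusion $A \subseteq \L$ yields $\L^\vee \subseteq A^\vee$ and hence $p^{i+1}\L^\vee \subseteq p^{i+1}A^\vee$, while the remaining relations are built into $A \in \Cal D_i$. The nontrivial inclusion $\L \subseteq p^i \L^\vee$ is where the $\tau$-invariance of $\L$ is really used. Expanding $\L = \sum_{k=0}^d \tau^k(A)$ and $\L^\vee = \bigcap_{k=0}^d \tau^k(A^\vee)$, and using $\hgs{\tau x}{\tau y} = \hgs{x}{y}^{\sigma^2}$ together with the conjugate-symmetry of $\hgs{\,}{\,}$, this reduces to the claim
\[
\hgs{A}{\tau^m(A)} \subseteq p^i W \qquad \text{for } 0 \le m \le d.
\]
The case $m = 0$ is the hypothesis $A \subseteq p^i A^\vee$; the cases $m \ge 1$ follow inductively, bootstrapping from $\tau(\L) = \L$ and the analogous bound for $\L_{d-1}$.

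The type $t(\L) = [\L : p^{i+1}\L^\vee]$ is then computed via the three-term chain $p^{i+1}\L^\vee \subseteq p^{i+1}A^\vee \subover{1} A \subseteq \L$. The crucial input is that each nontrivial step $\L_{k-1} \subsetneq \L_k$ has index exactly $1$. In the signature $(1,n-1)$ case this reflects the fact that $p^{i+1}A^\vee \subover{1} A \subover{n-1} p^i A^\vee$ has a unique ``step of $1$'', so $\tau(A)$ can differ from $A$ in at most one elementary-divisor direction, and this propagates through the successive sums. Granting this, $[\L : A] = d$; and by the standard hermitian-duality identity $[\L : A] = [A^\vee : \L^\vee]$, also $[p^{i+1}A^\vee : p^{i+1}\L^\vee] = d$. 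Concatenating gives $t(\L) = d + 1 + d = 2d + 1$. Since any vertex of level $i$ has type at most $n$, the bound $d \le (n-1)/2$ is automatic.

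The main obstacle is the index-one growth claim, $[\L_k : \L_{k-1}] \le 1$ at each nontrivial step. This is a lattice-theoretic statement about how two neighboring members of $\Cal D_i$---equivalently, two vertices of the building $\Cal B(\UU(C))$ joined by an edge---can sit relative to each other, and it is the only step that genuinely uses the signature $(1,n-1)$ hypothesis. The remaining ingredients amount to bookkeeping with the hermitian-duality relation $\tau(L^\vee) = \tau(L)^\vee$ and the chain structure of $\Cal D_i$.
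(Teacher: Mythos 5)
You've correctly identified the skeleton (minimality $\Rightarrow$ $\tau$-invariance; the three-term chain plus the duality $[\L:A]=[A^\vee:\L^\vee]$ gives $t(\L)=2[\L:A]+1$), but the two claims you yourself single out as ``the ones to verify'' are both left unproved, and the argument you sketch for the second one does not actually close. For the index-one growth $[\L_k:\L_{k-1}]\le 1$: asserting that the ``unique step of $1$'' in $p^{i+1}A^\vee\subover{1}A\subover{n-1}p^iA^\vee$ ``propagates through the successive sums'' is not an argument, since you have established no relation at all between $\tau(A)$ and $A$. What makes it work is the identity $(L^\vee)^\vee=\tau(L)$: setting $B:=p^{i+1}A^\vee$, dualizing $B\subset A$ gives $A^\vee\subset B^\vee=p^{-(i+1)}\tau(A)$, i.e.\ $B\subset\tau(A)$, whence $B\subset A\cap\tau(A)\subset\tau(A)$ with $[\tau(A):B]=[\tau(A):\tau(B)]=1$. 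Therefore $[\L_1:A]=[\tau(A):A\cap\tau(A)]\le 1$, and $[\L_k:\L_{k-1}]\le[\L_{k-1}:\L_{k-2}]$ because $\L_k/\L_{k-1}$ is a quotient of $\tau(\L_{k-1}/\L_{k-2})$.

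For the inclusion $\L\subset p^i\L^\vee$: the reduction to $\hgs{A}{\tau^m(A)}\subset p^iW$ is correct, but the claimed ``inductive bootstrap from $\tau(\L)=\L$ and the analogous bound for $\L_{d-1}$'' does not go through. If you try to propagate $\hgs{\L_{m-1}}{\L_{m-1}}\subset p^iW$ to $\hgs{\L_m}{\L_m}\subset p^iW$ using $\L_m=A+\tau(\L_{m-1})$, three of the four terms are fine, but the cross-term $\hgs{A}{\tau(\L_{m-1})}$ converts, under $\hgs{x}{y}=\hgs{y}{\tau^{-1}(x)}^\s$, into pairings against $\tau^{-1}(A)$, which lies outside $\L_{m-1}$, so the inductive hypothesis is silent about it. The twisted symmetry does give $\hgs{\tau(A)}{A}=\hgs{A}{A}^\s\subset p^iW$ essentially for free, and more generally identifies the valuation of $\hgs{A}{\tau^m(A)}$ with that of $\hgs{A}{\tau^{-1-m}(A)}$, but this closes only the cases $m\in\{0,-1\}$; already $\hgs{A}{\tau(A)}\subset p^iW$ is a genuinely new statement, and it is the real content of the lemma. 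As written, the proposal supplies no argument for it.
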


A lattice $A\in \Cal D_i$ is {\it superspecial} if $\tau(A)=A$, so that $A= 
\L$ is also
a vertex of type $1$. In general, $\L(A)$ is the smallest $\tau$-invariant lattice  
containing $A$ and, equivalently,
$p^{i+1}\L^\vee$ is the largest $\tau$-invariant lattice contained in  
$p^{i+1} A^\vee$.

For a vertex $\L$ of level $i$, let
$$\Cal V(\L) = \{\ A\in \Cal D_i\ \mid \ A\subset \L\ \}.$$
Equivalently, $p^{i+1}\L^\vee\subset p^{i+1}A^\vee$.

As $\L$ ranges over
  the vertices of level $i$, the subsets $\Cal V(\L)$ of  $\Cal D_i$ are organized in a coherent way, as  
follows.

  For two vertices $\L$ and $\L'$ of level $i$,
  \begin{equation}\label{}
  \Cal V(\L)\subset \Cal V(\L') \iff \L\subset \L',
\end{equation}
cf. \cite{vollaard}, Prop. 2.7. In this case we have 
$t(\L)\leq t(\L')$ for the types.

Let
$$\Cal V(\L)^o= \Cal V(\L)\setminus \bigcup_{\L'\subsetneqq \L} \Cal V 
(\L').$$
Then by \cite{vollaard}, Prop. 3.5
\begin{equation}
\Cal V(\L)= \bigcup_{\L'\subset \L} \Cal V(\L')^o,
\end{equation}
where the (finite) union on the right hand side is disjoint and runs  
over all vertex lattices of level
$i$  contained in $\L$ (these are then of type $t(\L')\leq t(\L)$).

The subsets $\Cal V(\L)$ and $\Cal V(\L)^o$  have an algebraic-geometric
 meaning.
\begin{theo}\label{alggeo}{\rm (Vollaard, Wedhorn)}
Let $\Cal N_i$ be non-empty.  \hfb
\noindent (i) $\Cal N_{i,{\rm red}}$ is connected.\hfb
\noindent (ii) For any vertex $\L$ of level $i$,  $\Cal V(\L)$ is the set of $\FF$-points of a closed  
irreducible subvariety of dimension
$\frac{1}{2}(t(\L)-1)$ of $\Cal N_{i,{\rm red}}$, and the inclusions $ 
\Cal V(\L')\subset \Cal V(\L)$ for $\L'\subset \L$
are induced by closed embeddings of algebraic varieties over $\FF$.
\end{theo}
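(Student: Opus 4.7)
The plan is to attach to each vertex $\L$ of level $i$ and type $t(\L) = 2d+1$ a non-degenerate hermitian space of dimension $2d+1$ whose associated isotropic-subspace variety provides the required closed subvariety. This gives irreducibility, projectivity, and dimension $d = (t(\L)-1)/2$ in one stroke. Part (i) then follows by combining irreducibility of the pieces with combinatorial connectivity of the level-$i$ vertex lattices.

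Set $V_\L := \L/p^{i+1}\L^\vee$. Since $\L \subset p^i\L^\vee$, the renormalized pairing $p^{-i}\hgs{\ }{\ }$ takes values in $W$ on $\L$, and its reduction modulo $p$ gives an $\FF_{p^2}$-valued hermitian form on $V_\L$ (with respect to the Galois involution of $\FF_{p^2}/\FF_p$). The radical of this reduction is $p^{i+1}\L^\vee/p^{i+1}\L^\vee = 0$, so the form is non-degenerate, and $\dim_{\FF_{p^2}} V_\L = t(\L) = 2d+1$. Given $A \in \Cal V(\L)$, set $U_A := A/p^{i+1}\L^\vee \subset V_\L$; a direct computation identifies its orthogonal complement as $U_A^\perp = p^{i+1}A^\vee/p^{i+1}\L^\vee$ (one checks $p^{i+1}A^\vee \subset \L$ using $A \supset p^{i+1}\L^\vee$). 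Combined with the chain $p^{i+1}A^\vee \subover{1} A$ from the definition of $\Cal D_i$, this forces $\dim U_A = d+1$, $\dim U_A^\perp = d$, and $U_A^\perp \subset U_A$, so $U_A^\perp$ is a maximal isotropic subspace of $V_\L$. Conversely, any maximal isotropic subspace $U' \subset V_\L$ lifts uniquely to a lattice $A \in \Cal V(\L)$ by pulling back $(U')^\perp$ along $\L \twoheadrightarrow V_\L$; the integrality conditions for $A \in \Cal D_i$ drop out from $p^{i+1}\L^\vee \subset A \subset \L$ and the codimension-one condition. This sets up a bijection between $\Cal V(\L)$ and the set of maximal isotropic subspaces of $V_\L$.

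The variety $X_\L$ of maximal isotropic subspaces of $V_\L$ (interpreted as the appropriate Deligne--Lusztig-type $\FF$-scheme for $\UU(V_\L)$) is a smooth, projective, irreducible variety of dimension $d$, via a standard computation for unitary flag varieties. To realize $X_\L$ as a closed subvariety of $\Cal N_{i,\red}$ compatibly with the $\FF$-point bijection, one constructs, using Zink's theory of displays and windows, a universal family of displays over $X_\L$ satisfying the signature $(1,n-1)$ condition whose classifying morphism $X_\L \to \Cal N_{i,\red}$ is bijective onto $\Cal V(\L)$ on $\FF$-points. Properness of $X_\L$ and reducedness of $\Cal N_{i,\red}$ then force this morphism to be a closed immersion. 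The compatibility with inclusions $\L' \subset \L$ comes from identifying $\L'/p^{i+1}\L^\vee \subset V_\L$ with the non-degenerate sub-hermitian space $V_{\L'}$: maximal isotropics of $V_{\L'}$ embed into maximal isotropics of $V_\L$ as a closed subvariety, realizing $\Cal V(\L') \subset \Cal V(\L)$ at the scheme level.

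For (i), each $\Cal V(\L)$ is irreducible hence connected, and any two vertex lattices $\L, \L'$ of level $i$ are linked by a finite chain $\L = \L_0, \L_1, \ldots, \L_r = \L'$ in which each adjacent pair satisfies $\L_k \subset \L_{k+1}$ or $\L_{k+1} \subset \L_k$; this reflects the connectivity of the subcomplex of the building $\Cal B(\UU(C))$ spanned by level-$i$ vertices, a standard feature of buildings of unitary type. Each adjacent pair then satisfies $\Cal V(\L_k) \subset \Cal V(\L_{k+1})$ or the reverse inclusion, so the union $\bigcup_\L \Cal V(\L) = \Cal N_{i,\red}$ is connected. The hardest step in the plan is the display-theoretic upgrading of the $\FF$-point bijection to a closed immersion of reduced schemes: while the bijection itself is an immediate combinatorial consequence of Proposition \ref{inken1} combined with the hermitian form on $V_\L$, producing an honest family of displays over $X_\L$ and verifying the closed-immersion property is the technical heart of the argument and relies on Zink's window theory cited in the introduction.
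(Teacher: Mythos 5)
The paper does not prove Theorem~\ref{alggeo}; right after the statement the authors write that Vollaard \cite{vollaard} proved it for signature $(1,1)$ and $(1,2)$ and that the general case is in Vollaard--Wedhorn \cite{vollwedhorn}. So there is no ``paper's own proof'' to compare against, and the only question is whether your outline would stand on its own.

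Your outline has the right shape (pass from $A\in\Cal V(\L)$ to the subspace $U_A=A/p^{i+1}\L^\vee$, identify the parameter space with a smooth projective irreducible variety for a unitary group, upgrade the set-level description to a closed immersion via a family of displays, and deduce connectedness from the connectivity of the building), but the central step is stated too loosely to be correct as written. You claim a bijection between $\Cal V(\L)$ and ``maximal isotropic subspaces of $V_\L$.'' If $V_\L$ is the $\F_{p^2}$-hermitian space $(\L/p^{i+1}\L^\vee)^\tau$, this set is finite, which is absurd. So you must mean subspaces of $V_\L\otimes_{\F_{p^2}}\FF$, but then the induced pairing is only $\s$-sesquilinear (the form $\hgs{\ }{\ }$ on $N_0$ satisfies $\hgs{\a x}{y}=\a\hgs{x}{y}$ and $\hgs{x}{\a y}=\a^\s\hgs{x}{y}$), so ``$U^\perp$'' is asymmetric and the double perp is a Frobenius twist rather than the identity, exactly as in $(L^\vee)^\vee=\tau(L)$ on the lattice level. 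Consequently the assignment $A\mapsto U_A^\perp$ is not inverted by ``pull back $(U')^\perp$,'' and the locus $\{U:\dim U=d+1,\ U^\perp\subset U\}$ is not the naive isotropic Grassmannian of a hermitian $\F_{p^2}$-space but a Frobenius-constrained subvariety --- this is precisely where the (closed) Deligne--Lusztig variety for $\UU(V_\L)$ enters, and identifying $\Cal V(\L)$ with it, together with its smoothness, irreducibility and dimension $\tfrac12(t(\L)-1)$, is the genuine mathematical content of Vollaard's theorem, not a routine ``unitary flag variety'' computation. You gesture at this in a parenthesis, but the bijection as stated does not impose the Frobenius condition, and without it the dimension claim and the irreducibility both need separate justification. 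You correctly flag that constructing an actual family of displays over this variety (to get a morphism to $\Cal N_{i,\red}$, not just a map on $\FF$-points) is the technical heart; that part, and the building-theoretic connectivity claim in (i) (that level-$i$ vertices are linked by inclusion chains through level-$i$ vertices), are asserted but not argued.
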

Vollaard \cite{vollaard} has proved this for signature $(1, s)$ with  
$s=1$ or $2$. The general case is in \cite{vollwedhorn}. In \cite 
{vollwedhorn} it is also proved that the variety
corresponding to $\Cal V(\L)$ is  smooth.\hfb
We note that $\Cal N_i$ is always non-empty if $n$ is even; if $n$ is odd, then $\Cal N_i$
is non-empty if and only if $i$ is even, cf. \cite{vollaard}, Prop. 1.22.  

We conclude this section with the following observation about scaling,
which will be useful later.

\begin{lem}
(i) If $A\in \Cal D_i$, then, for $k\in \Z$, $p^k A\in \Cal D_{i+2k}$.  
\hfb
(ii) If $\L$ is a vertex of level $i$, then $p^k\L$ is a vertex of  
level $i+2k$. \hfb
(iii) If $\L$ is a vertex of level $i$ for the hermitian space $(C$, $ 
\hgs{}{})$, then $\L$ is a vertex of
level $i+k$ for the hermitian space $(C$, $p^k\hgs{}{})$.
\end{lem}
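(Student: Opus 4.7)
The plan is to prove all three parts by a direct computation of how the dual lattice operation transforms under rescaling the lattice or the hermitian form. The key identity I would establish first is this: for a $W$-lattice $L \subset N_0$ and $k \in \Z$,
\begin{equation*}
(p^k L)^\vee = p^{-k} L^\vee,
\end{equation*}
which is immediate from the definition $L^\vee = \{x \in N_0 \mid \hgs{x}{L} \subset W\}$ together with $\hgs{\,}{}$-sesquilinearity (and from the analogous identity $(FL)^\perp = L^\vee$ one sees nothing new is needed). Similarly, if the form is replaced by $p^k \hgs{\,}{}$, the resulting dual is $p^{-k} L^\vee$.

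For part (i), I would substitute $(p^k A)^\vee = p^{-k} A^\vee$ into the defining chain of $\Cal D_{i+2k}$, namely
\begin{equation*}
p^{i+2k+1}(p^k A)^\vee \subover{1} p^k A \subover{n-1} p^{i+2k}(p^k A)^\vee.
\end{equation*}
This simplifies to $p^{i+k+1} A^\vee \subover{1} p^k A \subover{n-1} p^{i+k} A^\vee$, which after multiplying by $p^{-k}$ is exactly the condition $A \in \Cal D_i$. Part (ii) is proved the same way: plug $(p^k \L)^\vee = p^{-k}\L^\vee$ into $p^{i+2k+1}(p^k\L)^\vee \subset p^k\L \subset p^{i+2k}(p^k\L)^\vee$, cancel $p^k$, and recover the vertex condition at level $i$. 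The type index also transfers verbatim because scaling by $p^k$ is an isomorphism of $\OK$-modules, so the cokernel lengths are unchanged.

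Part (iii) is the purely formal observation that rescaling the form by $p^k$ rescales the dual by $p^{-k}$: writing $\vee'$ for the dual with respect to $p^k\hgs{\,}{}$, one has $\L^{\vee'} = p^{-k} \L^\vee$, hence
\begin{equation*}
p^{i+k+1} \L^{\vee'} \subset \L \subset p^{i+k} \L^{\vee'} \quad \iff \quad p^{i+1}\L^\vee \subset \L \subset p^i\L^\vee,
\end{equation*}
and again the type index is preserved because the relevant indices are unchanged.

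Since every statement reduces to the single bookkeeping identity $(p^k L)^\vee = p^{-k} L^\vee$ (resp.\ its analogue for rescaled forms), there is no real obstacle; the only thing to be careful about is consistently tracking the powers of $p$ and noting that rescaling a lattice or a form by a nonzero element does not change cokernel lengths, so the type of a vertex is unaffected in (ii) and (iii).
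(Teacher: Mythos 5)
Your proof is correct and amounts to the same elementary scaling computation the paper uses; the identity $(p^k L)^\vee = p^{-k}L^\vee$ (and its analogue for the rescaled form) is exactly what is needed. The paper phrases the same observation slightly more compactly by noting that the vertex condition $p^{i+1}\L^\vee\subset \L\subset p^i\L^\vee$ is equivalent to $p^{i+1}W\subset\hgs{\L}{\L}\subset p^iW$, after which parts (ii) and (iii) are read off simultaneously, but this is only a cosmetic repackaging of the same bookkeeping.
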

\begin{proof}
  Here, in cases (ii) and (iii), note that
$$p^{i+1}\L^\vee\subset \L\subset p^i\L^\vee \quad\iff\quad p^{i+1}W  
\subset \hgs{\L}{\L}\subset p^iW.$$
\end{proof}

\section{Cycles in the moduli space $\Cal N$.}

Let $(\YYbar,\iota)$ be the $p$-divisible group of dimension $1$ and  
height $2$ over $\FF$,
with an action $\iota:\OK\rightarrow \End(\YYbar)$ of $\OK$ and with  
principal polarization
$\l_{\YYbar}$ satisfying the
signature condition  $(0, 1)$, cf. Example \ref{rem10case}.
Let $\Cal N^0 =\Cal N(0,1)$ be the corresponding moduli space as in  
section 1.
Recall that, by Remark~\ref{rem10case},
$$\Cal N^0 = \coprod_{i\in \Z} \Cal N^0_{2i},$$
and that  $\Cal N^0_{2i}=\Spf\ W$.  For example, the unique point of $ 
\Cal N_0^0(\F)$
corresponds to $(\YYbar,\iota, \l_{\YYbar}, \rho_{\YYbar})$ where $ 
\rho_{\YYbar}$ is the identity map.
\begin{defn}
The {\it space of  special homomorphisms} is the $\kay$-vector space
$$\VV:=\Hom_{\OK}(\YYbar,\X)\tt_\Z\Q.$$
\end{defn}

For  $x, y\in \VV$,
we let
\begin{equation}\label{hermVV}
h(x,y) = \l_{\YYbar}^{-1}\circ \hat{y}\circ \l_{\X}\circ x \in \End_ 
{\OK}(\YYbar)\tt\Q \overset{\iota^{-1}}{\isoarrow} \kay.
\end{equation}
This hermitian form  is $\OK$-valued on
the lattice $\LL:=\Hom_{\OK}(\YYbar,\X)$.

\begin{defn}
(i) For a given special homomorphism $x\in \VV$, define the {\it  
special cycle
$\ZZ(x)$ associated to $x$ in} $\Cal N^0\times \Cal N$
as the subfunctor of collections  $\xi=(\bar Y,\iota,\l_{\bar Y}, \rho_ 
{\bar Y}, X,\iota,\l_X,\rho_X)$ in
  $(\Cal N^0\times \Cal N)(S)$
such that the quasi-homomorphism
$$\rho_X^{-1}\circ x\circ \rho_{\bar Y}: \ \bar Y \times_S \bar S   
\lra X\times_S \bar S$$
extends to a homomorphism from $\bar Y$ to $X$.  Here $\bar S = S  
\times_W \FF$ is the special
fiber of $S$. \hfb
(ii) More generally, for a fixed $m$-tuple $\xx = [x_1,\dots. x_m]$  
of special homomorphisms $x_i\in \VV$,
the {\it associated special cycle} $\ZZ(\xx)$ is the subfunctor of $\Cal  
N^0\times \Cal N$  of collections  $\xi=(\bar Y,\iota,\l_{\bar Y}, \rho_ 
{\bar Y}, X,\iota,\l_X,\rho_X)$ in
  $(\Cal N^0\times \Cal N)(S)$
such that the quasi-homomorphism
$$\rho_X^{-1}\circ \xx\circ \rho_{\bar Y}: \ \bar Y^m \times_S \bar S   
\lra X\times_S \bar S$$
extends to a homomorphism from $\bar Y^m$ to $X$.\hfb
(iii) For $i$ and $j\in \Z$, let $\ZZ_{ij}(\xx)$ be the formal  
subscheme of $\ZZ(\xx)$ whose projection to
$\Cal N^0$ (resp. $\Cal N$) lies in $\Cal N^0_{2i}$ (resp. $\Cal N_j 
$), i.e.,
$$\begin{matrix} \ZZ_{ij}(\xx) &\lra& \ZZ(\xx)\\
\nass
\downarrow&{}&\downarrow\\
\nass
\Cal N^0_{2i}\times \Cal N_j&\lra&\Cal N^0\times \Cal N.\end{matrix}
$$
\end{defn}

We note that $\Cal N^0_{2i}$ has been identified with $\Spf \ W$, via  
the canonical lift
$\bar Y$ of $\YYbar$, with its $\OK$-action. Hence $\ZZ_{ij}(\xx)$  
can be identified with
a closed formal subscheme of $\Cal N_j$.

\begin{rem} (i) It is clear from the definition that $\ZZ(\xx)$  
depends only on the orbit of the vector $\xx$
under the right action of $\GL_m(\OK)$, which acts as automorphisms  
of $\YYbar^m$.

\noindent (ii) The definition of the special cycles is compatible  
with intersections. Specifically,
the intersection of $\ZZ(\xx)$ and $\ZZ(\yy)$ is the locus where the  
whole collection $[\xx,\yy] =[x_1,\dots, x_m, y_1,\dots, y_{m'}]$
deforms, i.e.,
$$\ZZ(\xx)\cap \ZZ(\yy) = \ZZ([\xx,\yy]).$$
\end{rem}
\begin{rem}\label{scalingfor}
We note that  $\Cal N^0_{2i}$ has been identified with  $\Cal N^0_{0}$. Explicitly,
$(\bar Y, \iota, \lambda_{\bar Y}, \rho_{\bar Y})$ in  $\Cal N^0_{2i}$ is sent to 
$(\bar Y, \iota, \lambda_{\bar Y}, p^{-i}\rho_{\bar Y})$. Under this identification 
the  subfunctor $ \ZZ_{ij}(\xx)$ of $\Cal N^0_{2i}\times \Cal N_j$ is identified with the  subfunctor $ \ZZ_{0,j-2i}(\xx)$ of 
$\Cal N^0_{0}\times \Cal N_{j-2i}$.  Here the point is that the compositions $\rho_X^{-1}\circ \xx\circ \rho_{\bar Y}$ and $(p^{-i}\rho_X)^{-1}\circ \xx\circ (p^{-i}\rho_{\bar Y})$
coincide.\hfb
For the same reason, $ \ZZ_{i, j}(\xx)$ can be identified with $ \ZZ_{0, j}(p^i\xx)$. 
\end{rem}

\begin{prop}\label{divisor}
The functor $\ZZ(\xx)$ is represented by a closed formal subscheme of  
$\Cal N^0\times \Cal N$. In fact,  $\ZZ(x)$ is a relative divisor in $ 
\Cal N^0\times \Cal N$ (or empty) for any $x\in \VV \setminus \{0\}$.

{\rm Recall that a relative divisor is a closed formal subscheme, locally defined  
by one equation, which is neither a unit nor divisible by $p$.}
\end{prop}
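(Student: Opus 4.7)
The plan is to split the proof into three pieces: representability by a closed formal subscheme, identification of a single local equation via Grothendieck--Messing, and the verification that this equation is neither a unit nor divisible by $p$.

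\textbf{Representability.} I will use rigidity of quasi-isogenies. Choose $N \ge 0$ with $p^N x_r \in \LL = \Hom_{\OK}(\YYbar,\X)$ for each component $x_r$ of $\xx$. For any $S \in \text{Nilp}_W$ and any point $(\bar Y,\iota,\l_{\bar Y},\rho_{\bar Y}, X,\iota,\l_X,\rho_X)$, the quasi-homomorphism $p^N(\rho_X^{-1} \circ x_r \circ \rho_{\bar Y})$ is an honest $\OK$-linear homomorphism $\bar Y_{\bar S} \to X_{\bar S}$, and by rigidity it lifts uniquely to an honest homomorphism $f_r \colon \bar Y \to X$ over $S$. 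The extension condition on $\rho_X^{-1} \circ x_r \circ \rho_{\bar Y}$ is then equivalent to $f_r$ being divisible by $p^N$, i.e.\ to the vanishing of $f_r$ on the finite flat subgroup $\bar Y[p^N]$. This is a closed condition, so $\ZZ(x_r)$ is a closed formal subscheme; intersecting over the components of $\xx$ yields $\ZZ(\xx)$.

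\textbf{Local equation.} Now fix $x \in \VV \setminus \{0\}$ and $\xi_0 \in \ZZ(x)(\FF)$, and let $R = \widehat{\OO}_{\Cal N^0 \times \Cal N, \xi_0}$. Since $\Cal N^0_{2i} \cong \Spf W$ and $\Cal N$ is formally smooth of relative dimension $n-1$ over $W$, one has $R \cong W[[t_1,\ldots,t_{n-1}]]$. Apply Grothendieck--Messing to the universal family: the deformation $X$ over $R$ is specified by a lift of the Hodge filtration inside $\D(X_0) \otimes R$, compatible with the $\OK$-action, polarization, and signature condition $(1,n-1)$. The canonical lift $\bar Y$, in contrast, has its Hodge filtration pinned down over $W$. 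The map $x$ has a rational crystalline realization on $R$ by rigidity of quasi-isogenies, and the condition for $x$ to be integral is that this realization carries the Hodge filtration of $\bar Y$ into that of $X$. By $\OK$-equivariance the obstruction to this lies in a single isotypic component of rank one (the one where the signature condition leaves the Hodge filtration free), producing one equation $f_x \in R$ that cuts out $\ZZ(x) \times_{\Cal N^0 \times \Cal N} \Spf R$.

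\textbf{Neither a unit nor in $pR$.} That $f_x$ is not a unit is immediate from $\xi_0 \in \ZZ(x)(\FF)$. Suppose towards contradiction that $f_x \in pR$; then $\ZZ(x)$ contains $\Spf(R/p)$, so $x$ would extend at every $\FF$-point of the formal neighborhood. Translating via the Vollaard--Wedhorn bijection (Proposition~\ref{inken1}) and Lemma~\ref{zink}, these $\FF$-points correspond to Dieudonn\'e lattices $A \in \Cal D_j$ varying in a combinatorial neighborhood of the vertex attached to $\xi_0$; extension of $x$ at $A$ amounts to the lattice conditions $x(\MMbar_0^0) \subset A$ and $x(\MMbar_1^0) \subset p^{j+1}A^\vee$. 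For $x \ne 0$ these conditions are nonempty in constraint and one can exhibit neighboring lattices in $\Cal D_j$ violating them, giving the contradiction. Alternatively, one can argue on the rigid generic fiber, where special cycles attached to nonzero $x$ are proper closed subsets, so that $f_x$ cannot lie in $pR$.

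I expect the third step to be the main obstacle: the existence and one-equation character of $f_x$ follow fairly formally from rigidity and Grothendieck--Messing, but the exclusion $f_x \notin pR$ genuinely requires lattice-level input from the structure theory of $\Cal N_{\red}$ recalled in Section~2.
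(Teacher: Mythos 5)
Your overall architecture matches the paper's proof (representability, one local equation via Grothendieck--Messing, exclusion of $p$), and your representability argument is a reasonable sketch of what the paper delegates to \cite{RZ}, Prop.~2.9. Your Grothendieck--Messing step is also essentially the paper's argument (obstruction in a rank-one graded piece of $\Lie X$, forced by the signature condition), though the paper, following \cite{terstiege}, is more careful to reduce ``locally one equation'' to a statement about square-zero thickenings with trivial divided powers.

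The third step has two genuine gaps.

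First, the passage from $f_x\in pR$ at the chosen point $\xi_0$ to a lattice-theoretic contradiction is not correctly argued. $\Spf(R/p)$ has a single $\FF$-point, namely $\xi_0$ itself, so there is no ``combinatorial neighborhood'' of lattices $A\in\Cal D_j$ on which you can test the extension condition. The correct propagation uses two facts: $p$ is a prime element in each completed local ring of $\Cal N_j$ (a power series ring over $W$), and the locus where $p$ divides a local equation of $\ZZ_{ij}(x)$ is therefore open and closed; since $\Cal N_{j,\red}$ is connected (Theorem~\ref{alggeo}(i)), divisibility at one point forces $\Cal N_j(\FF)=\ZZ_{ij}(x)(\FF)$, and then Proposition~\ref{proj} gives $x\in\bigcap_\Lambda p^{j+1}\Lambda^\vee$ over \emph{all} vertices $\Lambda$ of level $j$. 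Only then does the lattice computation (exhibiting enough vertices to force $\bigcap_\Lambda\Lambda=0$) apply. You have the right ingredients on the shelf, but the connectedness/factoriality bridge is missing.

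Second, the case $n=2$ is not covered by the lattice argument and needs a separate treatment. Here $\ord\det(C)$ is odd, so the unitary group of $C$ is anisotropic, there is essentially a unique vertex, and $\bigcap_\Lambda\Lambda\neq 0$; the lattice argument fails, while $\Cal N_{j,\red}$ is a finite set of points with two-dimensional local rings, so the assertion is still nontrivial. The paper handles this by uniformizing the local ring (\cite{RZ}, Thm.~6.30) and invoking Wedhorn's ordinariness theorem to see that the universal $p$-divisible group at the generic point of the mod-$p$ fiber is ordinary, hence admits no nonzero $\OK$-linear map from $\bar\YY$. Your ``alternatively argue on the rigid generic fiber'' remark does not substitute for this: $f_x\in pR$ precisely says $\ZZ(x)$ is locally the special fiber and hence \emph{empty} on the rigid generic fiber, so no contradiction arises without additional input.
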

\begin{proof}
The first statement follows from \cite{RZ}, Proposition 2.9. To prove the  
second statement, it suffices to prove that $\ZZ_{i, j}$ is a relative  
divisor in $\Cal N_j$ (via the second projection in $\Cal N^0\times  
\Cal N$). By following the proof of the corresponding statement in  
\cite{terstiege}, Prop. 4.5, we see that, in order to prove that $\ZZ_{i, j}(x)$ is  
locally defined by the vanishing of one equation, it suffices to  
prove the following statement. Let $A$ be a $W$-algebra and let  
$A_0=A/I$, where the ideal $I$ satisfies $I^2=0$. We equip $I$ with  
trivial divided powers. We assume given a morphism $\phi: \Spec A\to  
\Cal N_j$ whose restriction to $\Spec A_0$ factors through $\ZZ_{i, j} 
$. Then the obstruction to factoring the given morphism through $\ZZ_ 
{i, j}$ is given by the vanishing of one element in $I$.

The value $\Cal D_{\bar Y}$ of the Dieudonn\'e crystal  of  $\bar Y 
\times_{\Spec W}\Spec A$ on $(A, A_0)$ is given by $\MMbar\otimes_WA 
$, and is equipped with its Hodge filtration
\begin{equation}\label{hodgeY}
0\to \Cal F_{\bar Y}\to \Cal D_{\bar Y}\to {\rm Lie}\  \bar Y\otimes_WA\to 0 ,
\end{equation}
where $\Cal F_{\bar Y}$ is generated by the element $\bar 1_0$.  
Similarly, the Dieudonn\'e
crystal $\Cal D_X$ of the pullback to $A$ of the universal object $ 
(X, \iota, \lambda)$ over $\Cal N$  comes with its Hodge filtration
\begin{equation}\label{hodgeX}
0\to \Cal F_X\to \Cal D_{X}\to {\rm Lie}\ X\to 0 .
\end{equation}
The fact that the restriction of $\phi$ to $\Spec A_0$ factors through $\ZZ_{i, j}(x)$ implies that there  
is an $\OK$-linear homomorphism $\alpha: \Cal D_{\bar Y}\to \Cal D_X$  
of $A$-modules, which respects the filtrations (\ref{hodgeY}) and  
(\ref{hodgeX}) after tensoring with $A_0$. We need to show that the  
condition that $\alpha$ respect the filtrations (\ref{hodgeY}) and 
(\ref{hodgeX}) is locally defined by one equation. However, this  
condition is obviously that $\alpha(\bar 1_0)\in \Cal F_X$,
i.e., that the image of $\alpha(\bar 1_0)$ in the degree zero component
$({\rm Lie}\ X)_0$ vanishes.
However, thanks to the signature condition, $({\rm Lie}\ X)_0$ is a  
locally free $A$-module of rank $1$. After choosing a local  
generator of $({\rm Lie}\ X)_0$, we may identify $\alpha(\bar 1_0)$ with an element in $A$
with zero image in $A_0$. Hence the condition is described by the  
vanishing of one element in the ideal $I$.

We still have to show that this element is non-trivial, and  that  
it is not divisible by $p$.
We first note the following simple fact.
\begin{lem}
Let $\frak X$ be a formal scheme such that $\Cal O_{\frak X, x}$ is  
factorial for each
$x\in \frak X_{\red}$. Let $g\in\Gamma(\frak X, \Cal O_{\frak X})$  
with $g|\frak X_{\red}\equiv 0$
and such that $g\in\Cal O_{\frak X, x}$ is an irreducible element for  
each
$x\in\frak X_{\red}$. Let $f\in\Gamma(\frak X, \Cal O_{\frak X})$ and  
consider the subset
\begin{equation*}
V = \{x\in\mathfrak X_{\rm red}\mid  g \text{{ \rm divides }} f  \text 
{{ \rm in } } \Cal O_{\mathfrak X, x}\}\ .
\end{equation*}
Then $V$ is open and closed in $\frak X_{\rm red}.$
\end{lem}
\begin{proof}
Consider the ideal sheaf
\begin{equation*}
\frak a = \{h\in\Cal O_\frak X\mid hf\in g\Cal O_\frak X\}\ .
\end{equation*}
Then $x\in V\iff{\frak a}_x = \Cal O_{\frak X, x}\iff x \notin {\rm  
Supp} (\Cal O_\frak X /\frak a)$.
Hence $V$ is open. To show that $V$ is closed, let $x\in V$ and let $x'$
be a specialization of $x$. In $\Cal O_{\frak X, x}$ we have an identity
\begin{equation*}
\frac{f}{g} = \frac{h}{s}\quad ,\quad s\in\Cal O_{\frak X, x'} 
\setminus \frak p_x\ .
\end{equation*}
Hence we obtain an identity in $\Cal O_{\frak X, x'}$,
\begin{equation*}
f s = g h\ .
\end{equation*}
If $g\mid f$ in $\Cal O_{\frak X, x'}$, then $x'\in V$. Otherwise,  
since $g$ is
irreducible in $\Cal O_{\frak X, x'}$, we have $g\mid s$. But then
$s\in g\Cal O_{\frak X, x'}\subset\frak p_x$, a contradiction.
\end{proof}
Now we prove that a local equation for $\ZZ_{i, j}(x)$ is not divisible  
by $p$.
Otherwise, by the previous lemma, and since, by Theorem \ref 
{alggeo} (i), $\Cal N_j$ is connected, it would
follow that $\Cal N_j(\FF)= \ZZ_{i, j}(x)(\FF)$. We now distinguish the
cases $n\geq 3$ and $n\leq 2$.

In the case $n\geq 3$ we appeal to Proposition \ref{proj} below (of  
course, the
proof of this proposition does not use the statement we are in the  
process
of proving). According to this proposition, the inclusion
$\Cal N_j (\FF)\subset \ZZ_{i, j}(x) (\FF)$ implies
\begin{equation}\label{intersection}
x\in\bigcap_\Lambda\ p^{j+1}\Lambda^\vee\ ,
\end{equation}
where $\Lambda$ runs through all vertices of level $j$. By the next  
lemma,
the intersection (\ref{intersection}) is trivial.
\begin{lem}
Let $n\geq 3$. Then
\begin{equation*}
\bigcap_\Lambda \ \Lambda\ = (0)\ .
\end{equation*}
Here the intersection runs over all vertices of level $j$.
\end{lem}
\begin{proof}
 By the reduction argument right after Corollary
\ref{integralityT} below, we may assume $j = 0$. First let $n$ be odd. By the parity condition  of \cite{vollaard} recalled in 
(\ref{Codd}) below, we may
choose a basis of $C$ as follows. Let $n-1 = 2k$ with $k\geq 1$. We  
choose
a basis $e, f_{\pm 1}, \ldots , f_{\pm k}$ with
\begin{equation*}
\{e, e\} = 1,\ \{e, f_{\pm i}\} = 0\ ,\quad \{f_i, f_j\} = p\delta_ 
{i, -j}\ .
\end{equation*}
Let ${\bf a} = (a_1, \ldots , a_k)\in\Z^k$. Set
\begin{equation*}
\Lambda =\Lambda_{\bf a}= [e, p^{a_1} f_1, \ldots , p^{a_k} f_k, p^{- 
a_k} f_{-k}, \ldots , p^{-a_1} f_{-1}]\ .
\end{equation*}
Then
\begin{equation*}
\Lambda^\vee = [e, p^{a_1-1} f_1, \ldots , p^{a_k-1} f_k, p^{-a_k-1}  
f_{-k}, \ldots , p^{-a_1-1} f_{-1}]\ .
\end{equation*}
Hence $\Lambda$ is a vertex of level $0$. By varying
${\bf a}\in\Z^k$, the intersection of these vertices is zero, which  
implies
the assertion in this case.

  Now let $n$ be even. Let $n-1 = 2k+1$, with $k\geq 1$.
We choose a basis $e, f_{\pm 1}, \ldots , f_{\pm k}, g$ with
\begin{equation*}
\{e, e\} = 1, \{g, g\} = p, \{e, g\} = \{e, f_{\pm i}\} = \{g, f_{\pm  
i}\} = 0, \{f_{\pm i}, f_{\pm j}\} = p\delta_{i, -j}\ .
\end{equation*}
Then for ${\bf a} = (a_1, \ldots , a_k)\in\Z^k$, the lattice
\begin{equation*}
\Lambda_{\bf a} = [e, p^{a_1} f_1, \ldots , p^{a_k} f_k, g, p^{-a_k}  
f_{-k}, \ldots , p^{-a_1} f_{-1}]
\end{equation*}
is a vertex of level $0$, and the assertion follows as before.
\end{proof}

This proves the assertion for $n\geq 3$. For $n = 1$, the assertion  
is trivial.
Now let $n = 2$. In this case $\Cal N_{\rm red}$ is a discrete set of points, 
but the local rings are two-dimensional, hence the assertion is non-trivial in this case.
  Let $y\in\ZZ_{i,j} (x)(\FF)$.
By the uniformization theorem \cite{RZ}, Thm. 6.30, the complete  
local ring $\widehat{\Cal O}_{\Cal N_j, y}$ is isomorphic to the  
completion
of the local ring of a closed point of the integral model of the  
Shimura variety
attached to ${\rm GU}(1, 1)$. Hence by Wedhorn's theorem 
\cite{mu-ord}, the pullback of the
universal $p$-divisible group $X$ to the generic point of
$\Spec (\widehat{\Cal O}_{\Cal N_j, y}\otimes\FF)$ is ordinary, i.e.,  
isogenous to
$\widehat{\G}_m^2\times (\Q_p/\Z_p)^2$. Hence there is no non-trivial
homomorphism from $\bar{\YY}$ into this $p$-divisible group, hence  
the closed
subscheme of $\Spec\widehat{\Cal O}_{\Cal N_j, y}$ cut out by an  
equation of
$\ZZ_{i, j} (x)$ does not contain the special fiber, as was to be shown.
\end{proof}

To study the set $\ZZ(\xx)(\FF)$ of $\FF$-points of a special cycle $ 
\ZZ(\xx)$, we
apply the construction of the previous section and reformulate things  
in terms of the hermitian space $C$.

We begin by describing the space $\VV$ of special homomorphisms.
Recall from Remark \ref{rem10case} the Dieudonn\'e module $\MMbar^0=  
\MMbar^0_0+\MMbar^0_1=W\bar 1_0+W\bar 1_1$  of $\YYbar$.
\begin{comment}
$$M^0 = W^2, \quad F = \Pi\s, \quad \Pi= \begin{pmatrix}{}&1\\ p&{} 
\end{pmatrix}, \quad
\iota(\a) = \begin{pmatrix} \a&{}\\{}&\bar\a\end{pmatrix},$$
with polarization form given by the determinant,
$$\gs{(a_0,a_1)}{(b_0,b_1)} = \delta(a_0b_1-a_1b_0).$$
%up to scaling.
In particular, $F 1_1 = 1_0$
and $F 1_0=p 1_1$.
\end{comment}
A special homomorphism $x\in \VV$ corresponds to a homomorphism,  
which we also denote by $x$,
from $\bar N^0$ to $N$. This homomorphism has degree zero with  
respect to the grading given by the
$\OK$-action, and so we may write $x=x_0+x_1$ where
$x_0:\bar N^0_0 \rightarrow N_0$ and $x_1:\bar N^0_1\rightarrow N_1 
$.  Moreover, since $x$ is $F$-linear,
$x_1 F= Fx_0$, so that $x_0$ determines $x_1$.  In particular, $x$ is  
determined by $x_0(\bar 1_0)$.
Note that $x_0(\bar 1_0)\in C = N_0^{\tau=1}$, since $x$ commutes with
$F$ and $V$, and $\bar 1_0\in C^0 = (\bar N^0_0)^{\tau=1}$.

The hermitian form on $\VV$  defined by (\ref{hermVV}) can be written as
$$h(x,y) = \iota^{-1}(y^*\circ x),$$
where $y^*$ is the adjoint of $y$ with respect to the polarizations,  
i.e., for $u^0\in \bar N^0$ and $u\in N$,
$$\gs{y(u^0)}{u} = \gs{u^0}{y^*(u)}^0.$$

\begin{lem} For $x = x_0+x_1$ and $y= y_0+y_1$ in $\VV$,
$$y^*\circ x = y_1^*\circ x_0 + y_0^*\circ x_1 \in \OK\tt W\simeq W 
\oplus W,$$
with
$$y_1^*\circ x_0  = p^{-1}\hgs{x_0(\bar 1_0)}{y_0(\bar 1_0)},$$
and
$$y_0^*\circ x_1 = p^{-1}\hgs{y_0(\bar 1_0)}{x_0(\bar 1_0)}.$$
\end{lem}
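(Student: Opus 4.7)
The plan is to exploit the $\OK$-grading. First I would observe that $y^*\colon N \to \bar N^0$ is $\OK$-linear: applying the defining relation $\gs{y(u^0)}{u} = \gs{u^0}{y^*(u)}^0$ with $\iota_N(\a)u$ in place of $u$, and using the $\OK$-linearity of $y$ together with the fact that both Rosati involutions induce $\s$ on $\OK$, one obtains $y^*(\iota_N(\a)u) = \iota_{\bar N^0}(\a)\,y^*(u)$. Hence $y^*$ respects the $\OK$-eigenspace decompositions, so $y^*(N_0)\subset \bar N^0_0$ and $y^*(N_1)\subset \bar N^0_1$. Calling these restrictions $y_1^*$ and $y_0^*$, specializing the adjoint relation to arguments in the appropriate graded pieces identifies them with the adjoints of $y_1\colon \bar N^0_1\to N_1$ and $y_0\colon \bar N^0_0\to N_0$ under the perfect dualities $\bar N^0_0\times \bar N^0_1\to W_\Q$ and $N_0\times N_1\to W_\Q$ induced by the two skew-symmetric forms. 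The decomposition $y^*\circ x = y_1^*\circ x_0 + y_0^*\circ x_1$ in $\OK\tt W\simeq W\oplus W$ is then simply the splitting of $y^*\circ x$ along the two summands of $\bar N^0$.

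For the scalar formula for the first component $y_1^*\circ x_0\in \End_W(W\bar 1_0)$, I would evaluate on $\bar 1_0$ and pair with $\bar 1_1$: the adjoint relation gives
$$\gs{y_1(\bar 1_1)}{x_0(\bar 1_0)} = \gs{\bar 1_1}{y_1^*(x_0(\bar 1_0))}^0.$$
Writing $y_1^*(x_0(\bar 1_0)) = c_0 \bar 1_0$ with $c_0\in W_\Q$, and using $\gs{\bar 1_0}{\bar 1_1}^0 = \delta$ together with skew-symmetry, the right-hand side becomes $-c_0 \delta$. On the left, the $F$-linearity of $y$ combined with $F\bar 1_0 = p\,\bar 1_1$ yields $y_1(\bar 1_1) = p^{-1} F\,y_0(\bar 1_0)$; then the skew-symmetry of $\gs{\ }{\ }$ and the definition $\hgs{u}{v} = \delta^{-1}\gs{u}{Fv}$ convert the left-hand side to $-p^{-1}\delta\,\hgs{x_0(\bar 1_0)}{y_0(\bar 1_0)}$. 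Cancelling $-\delta$ from both sides gives the first identity. The second formula is obtained by the mirror computation: evaluate on $\bar 1_1$, pair with $\bar 1_0$ under $\gs{\ }{\ }^0$, and substitute $x_1(\bar 1_1) = p^{-1} F\,x_0(\bar 1_0)$ in place of the analogous identity for $y$.

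The only real obstacle is bookkeeping: keeping straight which skew form is in play, carefully tracking the signs introduced by skew-symmetry on both sides, and threading the $\delta$-factor from the definition of $\hgs{\ }{\ }$. Conceptually the proof is a direct unwinding of the adjoint relation using the explicit Dieudonn\'e data of $\bar\YY$ recorded in Remark \ref{rem10case}.
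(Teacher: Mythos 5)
Your argument is correct and coincides with the paper's proof: the authors likewise unwind the adjoint relation on the graded pieces, substitute $y_1(\bar 1_1) = Fy_0F^{-1}(\bar 1_1) = p^{-1}Fy_0(\bar 1_0)$, and convert $\gs{\ }{\ }$ to $\hgs{\ }{\ }$ via the $\delta$-twist, exactly as you do. The preliminary remark about $y^*$ respecting the $\OK$-grading is left implicit in the paper but is a sound addition.
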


\begin{proof} Writing $y_1^*\circ x_0(\bar 1_0) = \a\,\bar 1_0$, we have
\begin{align*}
-\a \delta&= \gs{\bar 1_1}{y_1^*\circ x_0(\bar 1_0)}^0\\
\nass
{}&= \gs{y_1(\bar 1_1)}{x_0(\bar 1_0)}\\
\nass
{}&= \gs{Fy_0F^{-1}(\bar 1_1)}{x_0(\bar 1_0)}\\
\nass
{}&=-p^{-1}\delta \hgs{x_0(\bar 1_0)}{y_0(\bar 1_0)}.
\end{align*}
The component $y_0^*\circ x_1$ is found in the same way.
\end{proof}

In summary, we have proved the following.

\begin{lem}\label{relVC}
 There is an isomorphism
$$\VV = \Hom_{\OK}(\YYbar,\X)\tt\Q \isoarrow C, \qquad x\mapsto x_0 
(\bar 1_0).$$
The hermitian forms on the two spaces are related by
$$h(x,y) =  p^{-1}\hgs{x_0(\bar 1_0)}{y_0(\bar 1_0)}.$$\qed
\end{lem}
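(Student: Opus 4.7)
The plan is to extract the claim directly from the Dieudonné-theoretic setup laid out just above; the lemma is essentially a summary. First, for the isomorphism $\VV \isoarrow C$, $x \mapsto x_0(\bar 1_0)$: a quasi-homomorphism $x\in\VV$ tensored with $\Q$ yields an $F$- and $V$-equivariant, $\OK$-linear, $W_\Q$-linear map $\bar N^0 \to N$. Under the $\OK$-grading this decomposes as $x = x_0 + x_1$ with $x_i\colon \bar N^0_i \to N_i$, and $F$-equivariance $x_1 F = F x_0$ reduces everything to $x_0$. Since $\tau = V^{-1}F$, the map $x_0$ is $\tau$-equivariant, hence restricts to a $\Q_{p^2}$-linear map $C^0 \to C$; since $C^0 = \Q_{p^2}\cdot \bar 1_0$ is one-dimensional, this restriction is determined by the vector $x_0(\bar 1_0)\in C$, and since $\bar N^0_0 = C^0\tt_{\Q_p} W_\Q$ the whole $x_0$ is recovered by $W_\Q$-linear extension. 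This yields the bijection, with inverse simply extending a given $c\in C$ by $W_\Q$-linearity on $\bar N^0_0$ and then reconstructing $x_1$ through $F$.

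For the form identity, the preceding lemma computes
$$y^*\circ x = y_1^*\circ x_0 + y_0^*\circ x_1 \in \OK\tt_\Z W \simeq W\oplus W,$$
with degree-$0$ component $y_1^*\circ x_0 = p^{-1}\hgs{x_0(\bar 1_0)}{y_0(\bar 1_0)}$. By definition $h(x,y) = \iota^{-1}(y^*\circ x)\in \kay$, and the two projections of $\OK\tt W \simeq W\oplus W$ correspond to the embeddings $\ph_0,\ph_1\colon \kay \hookrightarrow W_\Q$. Therefore $h(x,y)$ is the unique element of $\kay$ whose $\ph_0$-image equals the degree-$0$ component, which, under the standard identification of $\kay$ with its $\ph_0$-image in $W_\Q$, is exactly $p^{-1}\hgs{x_0(\bar 1_0)}{y_0(\bar 1_0)}$, as claimed.

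The only step requiring mild attention is the compatibility of the $\iota^{-1}$ identification $\End_{\OK}(\YYbar)\tt\Q \simeq \kay$ with the coordinate description $\OK\tt W \simeq W\oplus W$, together with a sanity check that the hermitian symmetry of $h$ is reflected in the two degree components being $\s$-conjugate. The latter follows from the identity $\hgs{x}{y} = \hgs{y}{\tau^{-1}(x)}^\s$ established earlier, which forces $y_0^*\circ x_1 = \s(y_1^*\circ x_0)$. This is bookkeeping rather than a genuine obstacle; once it is in place, the statement of the lemma is the literal translation of the preceding computation.
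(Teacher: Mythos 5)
Your proof is correct and follows the paper's own line of reasoning: the lemma carries a terminal \textbf{\qedsymbol} in the source precisely because it is a summary of the two paragraphs that precede it (the identification of $x$ with $x_0(\bar 1_0)\in C$ via $F$-, $V$-, and $\OK$-equivariance, and the unnumbered lemma computing the two graded components of $y^*\circ x$). You reconstruct that argument faithfully, including the only nontrivial bookkeeping point — that the degree-$0$ component of $y^*\circ x$ under $\OK\tt W\simeq W\oplus W$ corresponds to $\ph_0(h(x,y))$ and that $\hgs{x_0(\bar 1_0)}{y_0(\bar 1_0)}$ lies in $\kay\subset W_\Q$ because both arguments are $\tau$-invariant — and the sanity check that the two components are $\s$-conjugate. (One small inherited slip: $\bar N^0_0 = C^0\tt_{\Q_{p^2}}W_\Q$, with $W_\Q$ a $\Q_{p^2}$-algebra via $\ph_0$, rather than $\tt_{\Q_p}$, but this is a typo in the source text and does not affect the argument.)
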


\begin{prop}\label{proj}
For $\xx = [x_1,\dots, x_m]\in \VV^m$, let
$L$ be the $W$-submodule in $N_0$ spanned by the
components of
$$\xx_0(\bar 1_0) =[(x_1)_0(\bar 1_0), \dots, (x_m)_0(\bar 1_0)]\in  
C^m.$$
Then the image of the projection of $\ZZ_{i,j}(\xx)$ to
$\Cal N_j(\FF)\simeq \Cal D_j$ is
$$\Cal W_{i,j}(\xx):=\{\ A\in \Cal D_j\mid p^iL\subset p^{j+1} A^\vee 
\ \}.$$
\end{prop}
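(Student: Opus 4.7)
The plan is to work on $\FF$-points and translate the extension condition defining $\ZZ_{i,j}(\xx)(\FF)$ into a lattice inclusion, via covariant Dieudonn\'e theory and the dictionary of Proposition~\ref{inken1}.

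First, I would reduce to $i=0$ using Remark~\ref{scalingfor}, which identifies $\ZZ_{i,j}(\xx)$ with $\ZZ_{0,j}(p^i\xx)$; under the isomorphism $\VV \isoarrow C$ of Lemma~\ref{relVC}, the tuple $p^i\xx$ is represented by $p^i\xx_0(\bar 1_0)$, whose $W$-span is $p^iL$. Next, fix a point of $\Cal N_j(\FF)$ corresponding via Proposition~\ref{inken1} to a Dieudonn\'e lattice $M=M_0\oplus M_1\subset N$ with $A:=M_0\in\Cal D_j$. For a special homomorphism $x\in\VV$ (thought of as a quasi-homomorphism $\YYbar\to\X$), the quasi-homomorphism $\rho_X^{-1}\circ x$ extends to a genuine homomorphism $\bar Y\to X$ if and only if the associated $\OK$-linear, $F$-equivariant graded map $x\colon \bar N^0\to N$ satisfies $x(\MMbar^0)\subset M$.

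Writing $x=x_0+x_1$ with $x_1 F=F x_0$, so that $x$ is determined by $v:=x_0(\bar 1_0)\in C$, and using $\MMbar^0_0=W\cdot\bar 1_0$ and $\MMbar^0_1=W\cdot\bar 1_1=W\cdot F^{-1}\bar 1_0$, the condition $x(\MMbar^0)\subset M$ unpacks as the pair of inclusions
\begin{enumerate}
\item[(a)] $v\in M_0=A$,
\item[(b)] $F^{-1}v\in M_1$, equivalently $v\in FM_1$.
\end{enumerate}
By Proposition~\ref{inken1}, $FM_1=p^{j+1}A^\vee$, and the chain condition from $A\in\Cal D_j$ gives $p^{j+1}A^\vee\subover{1} A$, so (a) is automatic from (b). Thus the single effective condition is $v\in p^{j+1}A^\vee$.

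Applying this to each $p^i x_r$ and taking the $W$-span over $r=1,\dots,m$ yields exactly $p^i L\subset p^{j+1}A^\vee$, that is, $A\in \Cal W_{i,j}(\xx)$. Conversely, any $A\in\Cal D_j$ satisfying this inclusion arises from an $\FF$-point of $\ZZ_{i,j}(\xx)$ by running the same computation backwards: pair the associated Dieudonn\'e lattice with the base point in $\Cal N^0_{2i}(\FF)$, noting that the quasi-homomorphisms $p^i x_r$ then extend by the above equivalence. There is no serious obstacle here; the only nontrivial point is the observation that condition (a) is subsumed by (b) thanks to the chain condition built into the definition of $\Cal D_j$, which is precisely why the image is cut out by the single inclusion $p^iL\subset p^{j+1}A^\vee$ rather than a pair of conditions.
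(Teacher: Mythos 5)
Your proof is correct and follows essentially the same route as the paper's: translate the extension condition into the lattice condition $x(\bar M^0)\subset M$ via covariant Dieudonn\'e theory, split it by grading into $v\in A$ and $F^{-1}v\in M_1$, use $FM_1=p^{j+1}A^\vee$ from Proposition~\ref{inken1}, and observe that the inclusion $p^{j+1}A^\vee\subset A$ from the chain condition makes the first condition redundant. The only cosmetic difference is that you reduce to $i=0$ via Remark~\ref{scalingfor}, whereas the paper keeps general $i$ and works with the scaled Dieudonn\'e lattice $p^i\MMbar^0$ directly; these are equivalent.
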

\begin{proof}
Recall that  $\Cal N^0_0(\FF)$ consists of a single point  
corresponding to  $\YYbar$ with Dieudonn\'e
lattice given above, cf. Remark~\ref{rem10case}.  Similarly, $\Cal  
N^0_{2i}(\FF)$ consists of a single point
corresponding to $\bar Y$ with Dieudonn\'e lattice $\bar M^0=p^i  
\MMbar^0$ in $\bar N^0$.
A special homomorphism $x\in \VV$, extends  to $\bar Y \rightarrow X$  
if and only if
$x(p^i \MMbar^0) \subset M$, where $M$ is the Dieudonn\'e lattice of  
$X$.
The latter condition is equivalent to $p^i x_0(\bar 1_0)\in M_0$ and  
$p^i x_1(\bar 1_1)\in M_1$.
Recall that $FM_1 = p^{j+1} A^\vee \subset A$.  Then,
$p^i x_1(\bar 1_1)\in M_1$ if and only if $p^iFx_1(\bar 1_1)\in  
FM_1=p^{j+1} A^\vee$.
But, since $Fx_1= x_0 F$ and $F(\bar 1_1)=\bar 1_0$, this last  
condition is equivalent to
$p^i x_0(\bar 1_0)\in p^{j+1} A^\vee$, which, in turn, implies the  
condition $p^i x_0(\bar 1_0)\in M_0=A$.
Thus, a  collection $\xx$ extends if, for each component $x_r$,  $p^i(x_r)_0(\bar 1_0)$ lies  
in $p^{j+1}A^\vee$, or, equivalently,
$p^iL\subset p^{j+1}A^\vee$.
\end{proof}

We call the hermitian matrix
$$T=h(\xx,\xx) = \big(h(x_i,x_j)\big)\in\Herm_m(\kay)$$
the {\it fundamental matrix} determined by $\xx$. 
 There is a variant of it which will also be useful in the sequel. Namely, when considering $\ZZ_{i,j}(\xx)$, where the fundamental matrix of $\xx$ is $T$, we will call the matrix $\tilde T=p^{2i-j} T$ the {\it scaled fundamental matrix} attached to $\ZZ_{i,j}(\xx)$. 

\begin{cor}\label{integralityT} If $\ZZ_{i,j}(\xx)(\FF)$ is 
non-empty, then the corresponding scaled fundamental matrix $\tilde T$ is  
integral, i.e., 
$$\tilde T  \in \Herm_m(\OK).$$
\end{cor}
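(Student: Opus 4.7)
The plan is to combine Proposition \ref{proj} (which describes the image in $\Cal D_j$ of $\ZZ_{i,j}(\xx)(\FF)$) with Lemma \ref{relVC} (which expresses $h$ in terms of $\hgs{\,}{}$) and then exploit the duality built into the vertex condition $p^{j+1}A^\vee \subset A \subset p^j A^\vee$.

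First, I would use non-emptiness of $\ZZ_{i,j}(\xx)(\FF)$ to pick, by Proposition \ref{proj}, some $A\in\Cal D_j$ with $p^iL\subset p^{j+1}A^\vee$, where $L$ is the $W$-span of the $(x_r)_0(\bar 1_0)\in C$. Equivalently, $(x_r)_0(\bar 1_0)\in p^{j-i+1}A^\vee$ for every $r$. Via Lemma \ref{relVC}, the entries of the fundamental matrix are
$$T_{rs}=h(x_r,x_s)=p^{-1}\hgs{(x_r)_0(\bar 1_0)}{(x_s)_0(\bar 1_0)},$$
so controlling $\tilde T_{rs}=p^{2i-j}T_{rs}$ reduces to estimating $\hgs{\,}{}$ on $p^{j-i+1}A^\vee\times p^{j-i+1}A^\vee$.

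The core estimate is the upper bound $\hgs{A^\vee}{A^\vee}\subset p^{-j-1}W$. This comes directly from the vertex relation: any $u\in A^\vee$ satisfies $p^{j+1}u\in A$, hence for $v\in A^\vee$ one gets $p^{j+1}\hgs{u}{v}=\hgs{p^{j+1}u}{v}\in \hgs{A}{A^\vee}\subset W$, using the defining property $\hgs{A^\vee}{A}\subset W$ and hermitian symmetry. Scaling by $p^{j-i+1}$ on both arguments (and remembering that $p$ is $\sigma$-fixed) gives $\hgs{p^{j-i+1}A^\vee}{p^{j-i+1}A^\vee}\subset p^{2(j-i+1)-(j+1)}W=p^{j-2i+1}W$, so $T_{rs}\in p^{j-2i}W$ and $\tilde T_{rs}\in W$.

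Finally, since $T_{rs}\in \kay$ by construction, $\tilde T_{rs}\in \kay\cap W=\OK$, which is the desired integrality. No serious obstacle is expected: the only substantive point is packaging the vertex condition on $A$ as the duality estimate $\hgs{A^\vee}{A^\vee}\subset p^{-j-1}W$ and tracking the scaling factors correctly.
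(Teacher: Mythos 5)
Your approach matches the paper's: both invoke Proposition~\ref{proj} to extract a lattice $A\in\Cal D_j$ with $p^iL\subset p^{j+1}A^\vee$, pass through Lemma~\ref{relVC}, and estimate the pairing using the chain condition on $A$; the paper simply distributes the scaling as $p^{j-i+1}A^\vee\subset p^{-i}A$ and pairs $A^\vee$ against $A$ directly, rather than deriving $\hgs{A^\vee}{A^\vee}\subset p^{-j-1}W$ as an intermediate estimate. One small caution about your justification: the appeal to ``hermitian symmetry'' to pass from $\hgs{A^\vee}{A}\subset W$ to $\hgs{A}{A^\vee}\subset W$ is not valid on all of $N_0$, where $\hgs{\cdot}{\cdot}$ satisfies only the twisted relation $\hgs{x}{y}=\hgs{y}{\tau^{-1}(x)}^\sigma$ and $A$ is a $W$-lattice that need not be $\tau$-stable; the form is genuinely hermitian only on $C=N_0^{\tau=1}$. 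The cleanest fix is to move $p^{j+1}$ to the \emph{second} slot: for $u,v\in A^\vee$ one has $p^{j+1}\hgs{u}{v}=\hgs{u}{p^{j+1}v}\in\hgs{A^\vee}{A}\subset W$ directly from the definition of $A^\vee$, so no symmetry is needed. (Alternatively, note that the vectors actually being paired lie in $C$, where hermitian symmetry does hold.) With that minor repair, your argument is correct and coincides with the paper's.
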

\begin{proof} The components of the matrix $h(\xx,\xx)$
have the form $p^{-1}\hgs{x_0(\bar 1_0)}{y_0(\bar 1_0)}$ with
$x_0(\bar 1_0)$ and $y_0(\bar 1_0)$ contained in $p^{j-i+1} A^\vee  
\subset p^{-i}A$. Note that the matrix determined by the
components of $\xx_0(\bar 1_0)$ is then
$$\hgs{\xx_0(\bar 1_0)}{\xx_0(\bar 1_0)} =  \big(\hgs{(x_i)_0(\bar  
1_0)}{(x_j)_0(\bar 1_0)}\big)  =p\,T\in p^{j-2i+1} \,\Herm_m(\OK).$$
\end{proof}

We note various `scaling relations' among the cycles $\ZZ_{i,j}(\xx)$. First, there is an isomorphism
\begin{equation}\label{scale1}
\ZZ_{i,j}(\xx)(\FF) \isoarrow \ZZ_{0, j-2i}(\xx)(\FF), \qquad (\bar  
M^0,M) \mapsto (\MMbar^0, p^{-i}M).
\end{equation}
Next, note that
\begin{equation}\label{scale2}
\Cal W_{i,j}(\xx) = \Cal W_{i,j}(L) = \Cal W_{0,j}(p^iL).
\end{equation}
These two relations
are simply the translation into the language of lattices of the scaling relations on the level of formal schemes in Remark \ref{scalingfor}. 
Finally, note that the set of lattices $\Cal D_j$ for the hermitian  
space $(C, \hgs{}{})$ coincides with the
set of lattices $\Cal D_0$ for the space $(C, p^j\hgs{}{})$. Both sets are empty if $n$ and $j$ are both odd.

\section{Hermitian lattices}

In this section, we consider the cycle $\ZZ_{i,j}(\xx)$ determined by  
an $n$-tuple $\xx$ of special homomorphisms
with {\it nonsingular} fundamental matrix $T = h(\xx,\xx)\in \Herm_n 
(\kay)$.  For global reasons (cf.~the Introduction),
 the cycle $\ZZ_ 
{i,j}(\xx)$ has empty generic fiber in this case, i.e., $p$ is  
locally nilpotent on  $\ZZ_{i,j}(\xx)$.
We give a description
of $\ZZ_{i,j}(\xx)(\FF)$ as a union of the strata in $\Cal N^0_{2i} 
\times \Cal N_j$ defined in \cite{vollaard}.

Recall \cite{vollaard} that the hermitian space $(C,  \hgs{}{})$ is  
determined up to isomorphism by  its dimension, since
\begin{equation}\label{Codd}
\ord\det(C)\equiv \dim(C)+1\mod 2.
\end{equation}
Let $\xx_0(\bar 1_0)= \big[(x_1)_0(\bar 1_0), \dots, (x_n)_0(\bar 1_0)\big]$  
be an $n$--tuple of vectors in $C$
spanning a lattice $L$, %with $L\subset pL^\vee$,
and let
$T'= \{\xx_0(\bar 1_0),\xx_0(\bar 1_0)\} = p\, T$ be the  
corresponding matrix of inner products.
By Lemma \ref{relVC},
$\ord\det(T) = \ord\det(C)-n$, hence
\begin{equation}\label{CoddT}
\ord\det(T) \text { is odd.}
\end{equation}
The cycle $\ZZ_{i,j}(\xx)$ determined by $\xx$ depends only on the $ 
\OK$-lattice $L$,
and, by Proposition~\ref{proj}, the projection of $\ZZ_{i,j}(\xx)(\FF) 
$ to $\Cal N_j(\FF) \isoarrow \Cal D_j$
is the set
$$\Cal W_{i,j}(L) = \{\  A\in \Cal D_j\mid p^iL\subset p^{j+1}A^\vee\  
\}.$$

We first note that $\Cal W_{i, j}(L)$ is a union of strata $\Cal V(\L)$.
\begin{prop}
$$\Cal W_{i,j}(L) = \bigcup_{\substack{ \L\\ \snass p^iL\,\subset\, p^ 
{j+1}\L^\vee}} \Cal V(\L),$$
where the $\L$'s are vertices of level $j$.
\end{prop}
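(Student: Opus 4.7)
The plan is to prove both inclusions by translating the defining conditions through the dual lattice operation and then invoking Zink's Lemma for the harder direction. First I would record the obvious reformulation: for $A\in \Cal D_j$ and a vertex $\L$ of level $j$, the containment $A\subset \L$ is equivalent to $\L^\vee\subset A^\vee$, i.e., to $p^{j+1}\L^\vee\subset p^{j+1}A^\vee$. Hence
$$\Cal V(\L) = \{\,A\in \Cal D_j \mid p^{j+1}\L^\vee\subset p^{j+1}A^\vee\,\}.$$

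For the inclusion $\supset$, if $A\in \Cal V(\L)$ with $\L$ a vertex of level $j$ satisfying $p^iL\subset p^{j+1}\L^\vee$, then chaining gives $p^iL\subset p^{j+1}\L^\vee\subset p^{j+1}A^\vee$, so $A\in \Cal W_{i,j}(L)$.

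For the inclusion $\subset$, given $A\in \Cal W_{i,j}(L)$, the natural candidate is the vertex $\L = \L(A) = A+\tau(A)+\cdots+\tau^d(A)$ produced by Lemma \ref{zink}. This lemma provides two things I would use in tandem: $\L$ is a vertex of level $j$ containing $A$ (so $A\in \Cal V(\L)$), and $p^{j+1}\L^\vee$ is the \emph{largest} $\tau$-invariant lattice contained in $p^{j+1}A^\vee$. It remains to verify $p^iL\subset p^{j+1}\L^\vee$, and for this the key observation is that $L$ is $\tau$-stable. Indeed, by definition $L$ is the $W$-span in $N_0$ of elements $(x_r)_0(\bar 1_0)\in C=N_0^{\tau=1}$, and since $\tau$ acts $\sigma^2$-linearly on $N_0$, the $W$-span of elements of $C$ is preserved. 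Thus $p^iL$ is a $\tau$-invariant $W$-lattice inside $p^{j+1}A^\vee$ (by hypothesis on $A$), and the maximality property from Zink's Lemma forces $p^iL\subset p^{j+1}\L^\vee$.

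The main conceptual ingredient is Zink's Lemma; once it is applied there is no real obstacle. The only subtle point to check carefully is the $\tau$-invariance of $L$, but this is immediate from the fact that the vectors $(x_r)_0(\bar 1_0)$ lie in the $\tau$-fixed subspace $C$. This same maximality argument, applied to the $\tau$-invariant lattice $p^iL$, is exactly what forces the natural candidate $\L(A)$ to satisfy the side condition $p^iL\subset p^{j+1}\L^\vee$ needed to place $A$ in the asserted union.
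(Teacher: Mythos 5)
Your argument is correct and follows essentially the same route as the paper: both directions are handled by passing to the vertex $\L(A)$ from Zink's Lemma, using that $p^{j+1}\L(A)^\vee$ is the largest $\tau$-stable lattice in $p^{j+1}A^\vee$ and that $p^iL$ is $\tau$-stable (a point the paper leaves implicit but which you rightly spell out, since $L$ is spanned by elements of $C=N_0^{\tau=1}$ and $\tau$ is $\sigma^2$-linear).
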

%\marginpar{We had missed the fact that $L\subset pA^\vee$ and not  just $L\subset A$ is required!}
\begin{proof}  The lattice $\L(A)$ associated to $A\in \Cal D_j$ by  
Lemma~\ref{zink} is the
smallest $\tau$-stable $W$-lattice containing $A$. By duality, $p^{j 
+1}\L(A)^\vee$ is the largest
$\tau$-stable $W$-lattice contained in $p^{j+1} A^\vee$. Thus, $p^iL 
\subset p^{j+1}A^\vee$ if and only if $p^iL\subset p^{j+1}\L(A)^\vee$.
Thus, $A\in \Cal W_{i,j}(L)$ if and only if $\Cal V(\L(A))\subset  
\Cal W_{i,j}(L)$.
\end{proof}

%In fact, one can show that $L\subset A$ implies that $L\subset p A^ \vee$.
%\marginpar{\it Omit this remark?}

%Next we have the following result about the equidimensionality of $ \Cal W(L)$.

Our main results about the structure of $\Cal W_{i,j}(L)$ are the  
following. In the rest of the section, $\tilde T=p^{2i-j} T$ will denote the corresponding scaled fundamental matrix. 

\begin{theo}\label{propdim} (i) If
$\tilde T \notin \Herm_n(\OK)$, then $\Cal W_{i,j}(L)$ is empty.\hfb
(ii) If
$\tilde T \in \Herm_n(\OK)$, let $\red (\tilde T)$ be the image of $\tilde T$ in $\Herm_n(\F_{p^2})$,
and let $t_0=t_0(\tilde T)$ be the largest odd integer less than or equal to  
$n - \text{\rm rank}(\,\red(\tilde T)\,)$.
Then
$$\Cal W_{i,j}(L) = \bigcup_{\substack{ \L\\ \snass p^iL\,\subset\, p^ 
{j+1}\L^\vee \\ \snass t(\L)=t_0}} \Cal V(\L).$$
\end{theo}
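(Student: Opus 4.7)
Part (i) follows immediately from Corollary~\ref{integralityT}. For part (ii), my plan is first to reduce to the case $i = 0$ using the scaling relations (\ref{scale1})--(\ref{scale2}), which preserve $\tilde T$. By the preceding proposition, $\Cal W_{0,j}(L) = \bigcup_\L \Cal V(\L)$, where $\L$ runs over all vertices of level $j$ with $L \subset p^{j+1}\L^\vee$. Using the monotonicity $\Cal V(\L) \subset \Cal V(\L')$ for $\L \subset \L'$ of the same level, the desired restriction of the union to vertices of type exactly $t_0$ will follow from two claims: an upper bound $t(\L) \le t_0$ for every such $\L$, and an extension statement that every such $\L$ is contained in a vertex of type exactly $t_0$ still satisfying the containment.

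For the upper bound, I will choose a $\OK$-basis of $\L$ in which the hermitian form has diagonal gram matrix $\diag(p^j 1_t, p^{j+1} 1_{n-t})$; such a basis exists by the remark following Proposition~\ref{inken1} (\cite{vollaard}, Lemma~1.16) applied to the rescaled form $p^{-j}\hgs{\cdot}{\cdot}$. Then $p^{j+1}\L^\vee$ has gram matrix $\diag(p^{j+2} 1_t, p^{j+1} 1_{n-t})$, and the inclusion $L \subset p^{j+1}\L^\vee$ yields a non-singular $D \in M_n(\OK)$ with
$$
\tilde T = D^t \diag(p \cdot 1_t, 1_{n-t}) \bar D.
$$
Reducing modulo $p$ gives $\text{rank}(\red \tilde T) \le n - t$, whence $t \le t_0$ by the oddness of $t$.

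The heart of the argument is the extension. I plan to construct $\L' \supset \L$ of type $t_0$ by iteratively enlarging $\L$ by one $\OK$-direction at a time, each step raising the type by $2$ and preserving $L \subset p^{j+1}\L^\vee$. Given $\L$ of type $t < t_0$ satisfying the containment, I will search for $v \in p^{j+1}\L^\vee \setminus p\L$ with $\hgs{v}{v} \in p^{j+2} W$ and $\hgs{v}{L} \subset p^{j+2} W$, and set $\L'' := \L + \OK \cdot p^{-1} v$. A direct verification (most easily carried out after rescaling the form by $p^{-j}$ so that $\L$ becomes a vertex of level $0$) shows $\L''$ is then a vertex of level $j$ and type $t+2$ with $L \subset p^{j+1}\L''^\vee$. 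The key structural observation is that the quotient $V := p^{j+1}\L^\vee/p\L$ inherits a non-degenerate hermitian $\F_{p^2}$-form of dimension $n - t$ via $p^{-(j+1)}\hgs{\cdot}{\cdot} \mod p$, and the conditions on $v$ above translate exactly into requiring $\bar v \in V$ to be non-zero, isotropic, and orthogonal to the image $\bar L$ of $L$ in $V$. The induced form on $\bar L$ coincides with $\red \tilde T$ (read off from the basis of $L$), so it has rank $\text{rank}(\red \tilde T)$.

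Existence of such $\bar v$ splits into two cases. If $\bar L$ is degenerate in $V$ (that is, $\dim \bar L > \text{rank}(\red \tilde T)$), its radical is non-zero and lies inside $\bar L \cap \bar L^\perp$, supplying isotropic vectors in $\bar L^\perp$. Otherwise $\bar L$ is non-degenerate, so $\bar L^\perp$ is non-degenerate of dimension $(n - t) - \text{rank}(\red \tilde T) \ge t_0 - t \ge 2$; the last inequality uses that $t$ and $t_0$ are both odd with $t < t_0 \le n - \text{rank}(\red \tilde T)$. Since every non-degenerate hermitian $\F_{p^2}$-form of dimension $\ge 2$ is isotropic, the required $\bar v$ exists. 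The main obstacle will be the careful verification that a lift $v$ of $\bar v$ really produces a bona fide vertex of the correct type with the containment of $L$ preserved, and that iterating the enlargement terminates at type exactly $t_0$ (which is automatic since $t_0 - t$ is even).
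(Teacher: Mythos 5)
Your argument is correct, and its skeleton --- reduce to $i=0$ by scaling, bound the type of every admissible vertex by $t_0$, then saturate any admissible vertex to one of type exactly $t_0$ by repeatedly adjoining $\OK\cdot p^{-1}v$ for a suitable isotropic $v$ --- is the same as the paper's. Indeed, after rescaling to level $0$ and multiplying by $p$, your quotient $V=p^{j+1}\L^\vee/p\L$ together with the subspace $\bar L^{\perp}$ is canonically the paper's $\L^\vee/\L$ together with $(\L^\vee\cap pL^\vee)/\L$, and your enlargement $\L''=\L+\OK\, p^{-1}v$ is the preimage construction of Lemma~\ref{biggertype}. You diverge in two places, both in the direction of economy. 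For the upper bound $t(\L)\le t_0$, the paper passes through the module $D(L)=pL^\vee/L$ and its submodules $B=\L/L$ (Lemma~\ref{Dform}), bounding $t=\dim B/B^{\perp}\le\dim B[p]\le\dim D[p]=n-\text{\rm rank}(\red\tilde T)$; your factorization $\tilde T={}^tD\,\diag(p1_t,1_{n-t})\,\bar D$ reaches the same conclusion more directly. For the existence of the isotropic vector, the paper runs a dimension count ($2r\ge m-t$, Lemma~\ref{tight}) showing that the relevant subspace has dimension $r\ge 2$ --- whence an isotropic line --- \emph{except} in a special case $(\star)$ with $r=1$, which it then disposes of by a separate lemma. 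Your dichotomy on whether $\bar L$ is degenerate absorbs that exceptional case: when $r=1$ your nondegenerate Case 2 cannot occur, and the radical of $\bar L$ supplies the isotropic vector, yielding the same enlarged lattice $\L^\vee\cap pL^\vee$ that the paper constructs by hand. The only point worth spelling out in a final write-up is that the induced form on $\bar L$ has rank equal to $\text{\rm rank}(\red\tilde T)$ even though the images of the generators of $L$ need not form a basis of $\bar L$: the gram matrix of any spanning set has the same rank as the form itself.
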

By Theorem \ref{alggeo}, (ii) we deduce from this theorem:
\begin{cor}
 If it is non-empty, $\Cal W_{i,j}(L)$  is the set of $\FF$-points of a variety of  
pure dimension $\frac{1}{2}(t_0(\tilde T)-1)$.\qed
\end{cor}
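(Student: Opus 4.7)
The plan is to derive the corollary directly from part (ii) of Theorem~\ref{propdim} combined with part (ii) of Theorem~\ref{alggeo}. By Theorem~\ref{propdim}(ii), under the hypothesis $\tilde T\in\Herm_n(\OK)$ one has the decomposition
$$\Cal W_{i,j}(L)=\bigcup_{\L}\Cal V(\L),$$
where the union ranges over the vertex lattices $\L$ of level $j$ satisfying $p^iL\subset p^{j+1}\L^\vee$ and $t(\L)=t_0(\tilde T)$. The crucial observation is that all vertices that appear share the \emph{same} type $t_0(\tilde T)$.

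The first step is to check that the index set of vertices $\L$ in this union is finite. The condition $p^iL\subset p^{j+1}\L^\vee$ gives a lower bound on $\L^\vee$ in $C$, so via $\L\subset p^{j}\L^\vee$ one traps $\L$ between two fixed $\OK$-lattices in $C$; the set of $\OK$-lattices lying between two fixed $\OK$-lattices is finite, so only finitely many such $\L$ can occur. (Alternatively, finiteness is forced since $\Cal W_{i,j}(L)$ is the $\FF$-point set of the non-empty locally noetherian formal scheme $\ZZ_{i,j}(\xx)$ with $p$ locally nilpotent.)

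Next I would invoke Theorem~\ref{alggeo}(ii): for each vertex $\L$ appearing in the union, $\Cal V(\L)$ is the set of $\FF$-points of a closed irreducible subvariety of $\Cal N_{j,\mathrm{red}}$ of dimension $\frac{1}{2}(t(\L)-1)$. Since every vertex appearing has the common type $t(\L)=t_0(\tilde T)$, each such irreducible piece has dimension $\frac{1}{2}(t_0(\tilde T)-1)$. The finite union of these closed irreducible subvarieties is therefore a closed reduced subvariety of $\Cal N_{j,\mathrm{red}}$ whose irreducible components are among the $\Cal V(\L)$, all of the same dimension. Consequently it is of pure dimension $\frac12(t_0(\tilde T)-1)$, and its $\FF$-points coincide with $\Cal W_{i,j}(L)$.

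There is no substantive obstacle: once one grants the finiteness of the indexing set and the inclusions $\Cal V(\L')\subset\Cal V(\L)\iff\L'\subset\L$ recorded earlier, purity of dimension is automatic from the constancy of $t(\L)$ across the union supplied by Theorem~\ref{propdim}(ii). The whole content of the corollary is thus the uniform-type statement already built into Theorem~\ref{propdim}(ii); the geometric input (that $\Cal V(\L)$ is an irreducible variety of the predicted dimension) is imported wholesale from Theorem~\ref{alggeo}(ii).
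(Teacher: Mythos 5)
Your proposal is correct and follows exactly the route the paper intends: the corollary is stated in the text as an immediate consequence of Theorem~\ref{propdim}(ii) (which gives $\Cal W_{i,j}(L)$ as a union of $\Cal V(\L)$'s all of the same type $t_0(\tilde T)$) together with Theorem~\ref{alggeo}(ii) (which gives each $\Cal V(\L)$ the structure of a closed irreducible variety of dimension $\tfrac12(t(\L)-1)$). Your added remarks on finiteness of the index set are harmless elaboration of what the paper leaves implicit, so there is nothing to flag.
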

\begin{rem}\label{remalggeo}
   Note that $t_0$ only depends on the span $L$ of the components of $ 
\xx_0(\bar 1_0)$.
\end{rem}

\begin{theo}\label{propirred}
Let\footnote{Since $p$ is odd, every $\GL_n(\OK)$-orbit in $\Herm_n(\OK)$ has a unique representative 
of this form.
%Here $\diag(A_0, A_1, \dots, A_k)$ denotes a matrix with 
%the give square matrices $A_i$ along its diagonal and all other entries $0$.
}
$$\tilde T \simeq \diag(1_{n_0},p 1_{n_1}, \dots,  p^k 1_{n_k})$$
be a Jordan decomposition of $\tilde T$ and let
$$n^+_{\text{\rm \ even}} = \sum_{\substack{i \ge 2\\ i\ \text{\rm \  
even}}} n_i\qquad\text{
and}\qquad
n^+_{\text{\rm \ odd}} = \sum_{\substack{i \ge 3\\ i\ \text{\rm \  
odd}}} n_i.$$
Then $\Cal W_{i,j}(L) = \Cal V(\L)$ for a unique vertex $\L$ of level  
$j$ and type $t_0(\tilde T)$ if and only if
\begin{equation*}
\max(n^+_{\text{\rm \ even}}, n^+_{\text{\rm \ odd}}) \le 1. \tag{$\star$}
\end{equation*}
\end{theo}
By Theorem \ref{alggeo}, (ii) we deduce from this theorem:
\begin{cor}
  $\Cal W_{i,j}(L)$ is the set of $\FF$-points of an   irreducible variety  
if and only if the condition $(\star)$ in the previous theorem is satisfied.\qed
\end{cor}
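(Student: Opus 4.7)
I will reduce the statement to a counting problem for vertex lattices and then resolve the count via the Jordan decomposition of $\tilde T$.

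By Theorem \ref{propdim}(ii), $\Cal W_{i,j}(L)$ is the union of the closed strata $\Cal V(\L)$ as $\L$ ranges over the vertices of level $j$ and type $t_0 = t_0(\tilde T)$ satisfying $p^i L \subset p^{j+1}\L^\vee$; by Theorem \ref{alggeo}(ii), each $\Cal V(\L)$ is irreducible of the common dimension $\frac12(t_0 - 1)$. For two distinct vertices $\L \neq \L'$ of the common type $t_0$, neither can contain the other (the type pins down the type-index), so the open dense strata $\Cal V(\L)^o$ and $\Cal V(\L')^o$ are disjoint and neither of the closed strata contains the other. Consequently, $\Cal W_{i,j}(L)$ is the underlying set of an irreducible variety iff exactly one vertex occurs in the union, and the theorem therefore reduces to showing that the number of such vertices $\L$ equals one iff $(\star)$ holds.

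Using the scaling relations (\ref{scale1}) and (\ref{scale2}) of Remark \ref{scalingfor} together with part (iii) of the final scaling lemma of Section 2 (which shifts the level by rescaling $\hgs{}{}$), I normalize to $i = 0$ and $j = 0$. A vertex $\L$ of level $0$ and type $t_0$ admits a basis in which its Gram matrix is $\diag(1_{t_0}, p\, 1_{n - t_0})$, so $p\L^\vee$ has Gram matrix $\diag(p^2\, 1_{t_0}, p\, 1_{n - t_0})$. The constraint $p^i L \subset p^{j+1}\L^\vee$ becomes $L \subset p\L^\vee$, where $L$ has Gram matrix $p\tilde T$. The count is therefore the number of overlattices $N \subset C$ of $L$ with Jordan form $p\, 1_{n - t_0} + p^2\, 1_{t_0}$ (each such $N$ being $p\L^\vee$ for a unique vertex $\L$).

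I then analyze this count via the Jordan decomposition $\tilde T = \bigoplus_k p^k 1_{n_k}$ and the associated orthogonal decomposition $L = \bigoplus_k L_k$, in which $L_k$ has rank $n_k$ and Gram matrix $p^{k+1}\, 1_{n_k}$. The target overlattice $N$ has only two Jordan levels, $p$ and $p^2$. The blocks $L_0$ and $L_1$ embed canonically into $N$ --- their Gram valuations $1$ and $2$ already match the two levels in $N$ --- and contribute no free parameters. Each remaining block $L_k$ with $k \ge 2$ is accommodated by a rescaling-and-companion construction: a basis vector $v \in L_k$ is replaced by $p^{-r} v + w$ with $w$ drawn from a complementary summand of $L$ and $r$ of order $k/2$, the parity of $k$ dictating whether the result lands in $N$'s $p$-part or $p^2$-part (blocks with $k$ even $\ge 2$ feed the $p^2$-part, blocks with $k$ odd $\ge 3$ feed the $p$-part). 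The companion $w$ is uniquely forced (so only one $N$ arises) iff each of the two parity classes contributes at most one such $L$-vector, i.e.\ $n^+_{\text{\rm \ even}} \le 1$ and $n^+_{\text{\rm \ odd}} \le 1$, which is precisely $(\star)$. When $(\star)$ fails, a positive-dimensional family of companion choices gives distinct overlattices $N$, hence distinct vertices $\L$.

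The main obstacle is the third step. Making the rescaling-and-companion construction rigorous requires a careful, block-by-block classification of hermitian $\OK$-overlattices of $L$ of prescribed Jordan type, the key technical point being that two ``deep'' Jordan blocks of matching parity ($k$ both $\ge 2$ and of the same parity) can be paired into a single hyperbolic-type block of $N$ (with exactly one projective parameter of choice per such pairing), while blocks of mismatched parity do not interact. The clean emergence of the two independent quantities $n^+_{\text{\rm \ even}}$ and $n^+_{\text{\rm \ odd}}$ in $(\star)$ reflects the two valuations $1$ and $2$ available in $N$'s Gram matrix, each absorbable only by rescaled vectors drawn from Jordan blocks of matching parity.
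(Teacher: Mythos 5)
Your first paragraph is the correct (and essentially complete) argument: by Theorem~\ref{propdim}(ii), $\Cal W_{i,j}(L)$ is a finite union of the irreducible, equidimensional strata $\Cal V(\L)$ as $\L$ ranges over vertices of level $j$ and type $t_0$ with $p^iL\subset p^{j+1}\L^\vee$; distinct vertices of equal type cannot contain one another (since $\L\subsetneq\L'$ of the same level forces $t(\L')=t(\L)+2\dim_{\F_{p^2}}(\L'/\L)>t(\L)$), so no stratum swallows another, and thus the union is irreducible iff there is exactly one such $\L$. At that point the statement of the corollary has been reduced \emph{exactly} to the statement of Theorem~\ref{propirred}, which asserts that $\Cal W_{i,j}(L)=\Cal V(\L)$ for a unique vertex of level $j$ and type $t_0$ if and only if $(\star)$ holds --- and whose proof the paper supplies in full. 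The corollary is deliberately phrased so that it is an immediate consequence of Theorem~\ref{propirred} combined with Theorem~\ref{alggeo}(ii); the paper's "proof" is literally a one-line citation.

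The gap is therefore in your third paragraph, where, instead of invoking Theorem~\ref{propirred}, you set out to re-derive it via a ``rescaling-and-companion'' classification of overlattices $N=p\L^\vee$ with Jordan type $p\,1_{n-t_0}\oplus p^2\,1_{t_0}$. You yourself flag this as ``the main obstacle,'' and rightly so: as written it is a heuristic, not a proof. In particular, the assertion that the companion vector $w$ is ``uniquely forced'' when each parity class has at most one deep block is not justified, nor is the claim that two deep blocks of matching parity always produce a positive-dimensional family of valid $N$'s (the paper instead exhibits two explicit vertices $\L_1\neq\L_2$ and checks non-containment via $h(\L_1,\L_2)\not\subset p^{-1}\OK$, then handles the $(\star)$ case by reducing to $n=2$ and a direct matrix computation). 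Since the corollary's whole force is ``irreducible $\iff$ unique vertex,'' your sketchy count is duplicating the hard part the paper already proves; if you simply replaced your third and fourth paragraphs with an invocation of Theorem~\ref{propirred}, your first paragraph would give a correct and clean proof.
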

\begin{cor}\label{point}
  $\Cal W_{i,j}(L)$ consists of a single point if and only if
$$n-\text{\rm rank}(\red(\tilde T)) \le 2.$$\qed
\end{cor}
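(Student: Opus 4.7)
The plan is to combine Theorem \ref{propdim}(ii), the corollary to Theorem \ref{propirred}, and the dimension formula of Theorem \ref{alggeo}(ii). By Theorem \ref{propdim}(ii), $\Cal W_{i,j}(L)$ is a union of strata $\Cal V(\L)$ with $\L$ a vertex of level $j$ and common type $t(\L)=t_0(\tilde T)$; and by Theorem \ref{alggeo}(ii) each such stratum is irreducible of dimension $\tfrac12(t_0(\tilde T)-1)$. Hence $\Cal W_{i,j}(L)$ is a single point exactly when (a) the union reduces to a single stratum $\Cal V(\L)$, which by the corollary to Theorem \ref{propirred} is the condition $(\star)$, and (b) that stratum is zero-dimensional, i.e.\ $t_0(\tilde T)=1$.

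Since $t_0(\tilde T)$ is by definition the largest odd integer $\le n-\text{rank}(\red(\tilde T))$, condition (b) is equivalent to $n-\text{rank}(\red(\tilde T))\in\{1,2\}$. The direction ``single point $\Rightarrow n-\text{rank}(\red\tilde T)\le 2$'' is immediate. For the converse, assume $n-\text{rank}(\red\tilde T)\le 2$. Using (\ref{CoddT}), the scaling identity $\tilde T=p^{2i-j}T$, and the fact that $\Cal D_j$ is empty when $n$ and $j$ are both odd, one checks that $\ord\det(\tilde T)$ is odd whenever $\Cal W_{i,j}(L)$ is non-empty. This rules out $n-\text{rank}(\red\tilde T)=0$, which would force $\tilde T\in \GL_n(\OK)$ and hence $\ord\det(\tilde T)=0$. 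So $n-\text{rank}(\red\tilde T)\in\{1,2\}$, securing (b).

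It remains to verify $(\star)$. Writing the Jordan decomposition $\tilde T=1_{n_0}+p\,1_{n_1}+\cdots+p^k\,1_{n_k}$, we have $n-\text{rank}(\red\tilde T)=n_1+\cdots+n_k\in\{1,2\}$, and the constraint that $\sum_{i\ge 1}i\,n_i=\ord\det(\tilde T)$ is odd. A direct case check shows the only admissible configurations are $n_a=1$ for a single odd $a\ge 1$, or $n_a=n_b=1$ for distinct $a<b$ with $a+b$ odd; in each pattern at most one block sits at an even exponent $\ge 2$ and at most one at an odd exponent $\ge 3$, so $n^+_{\text{even}},n^+_{\text{odd}}\le 1$, i.e., $(\star)$ holds. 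The main obstacle is exactly this parity bookkeeping in the final step: all of the geometric content is already packaged in the two preceding theorems, and what remains is to confirm that the parity constraint on $\ord\det(\tilde T)$ both excludes the degenerate case $n-\text{rank}(\red\tilde T)=0$ and forces the Jordan data to satisfy $(\star)$ automatically once the rank deficit is at most two.
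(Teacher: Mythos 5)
Your proof is correct and fills in precisely the details the paper leaves implicit (the corollary is stated with \qed, with no explicit argument). Combining the pure-dimension formula $\tfrac12(t_0(\tilde T)-1)$ from the corollary to Theorem \ref{propdim} with the irreducibility criterion $(\star)$ from the corollary to Theorem \ref{propirred}, and then using the parity constraint $\ord\det(\tilde T)\equiv 1\pmod 2$ (which holds whenever $\Cal W_{i,j}(L)\neq\emptyset$, by (\ref{CoddT}) and the discussion of when $\Cal D_j$ is non-empty) both to exclude $n-\mathrm{rank}(\red\tilde T)=0$ and to show $(\star)$ is automatic once the rank deficit is at most two, is exactly the intended route.
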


\begin{rem}  Is it true that $\Cal W_{i,j}(L)$ is always connected?
\end{rem}

Before proving these results, we make a few simple observations.
First of all, since $\Cal W_{i,j}(L) = \Cal W_{0,j}(p^iL)$, it  
suffices to consider the case where $i=0$.
Second, for a lattice $A\subset C$,  note that
$p^j A^\vee$ is the dual lattice with respect to the scaled hermitian  
form $p^{-j}\hgs{}{}$.
Thus, if we denote by $C^{(j)}$ the scaled hermitian space $(C, p^{-j} 
\hgs{}{})$, we have
$\Cal D_j = \Cal D_0^{(j)}$ in the obvious notation.  Thus,
$$\Cal W_{i,j}(L) = \Cal W_{0,0}^{(j)}(p^iL).$$
Moreover, $\L$ is a vertex of level $j$ in $C$ if and only if $\L$ is  
a vertex of level $0$ in $C^{(j)}$.
Finally, note that if $T = h(\xx,\xx)$, then $p^{2i-j}T = p^{-j}h(p^i 
\xx,p^i\xx)$.  Thus it suffices to
consider the case where $i=j=0$. It is important to note that for $n$  
odd, $\Cal N_j(\FF)$ is empty unless
$j$ is even. Thus, in all cases, the space $C^{(j)}$ again satisfies  
the parity relation (\ref{Codd}).

 From now on, in this section, we assume that $i=j=0$ and write $\Cal  
W(L)$ for $\Cal W_{0,0}(L)$.
Also, all vertices will have level $0$,
and we assume that $\tilde T=T= h(\xx,\xx) \in \Herm_n(\OK)$.

First note that if $L\subset p\L^\vee$ for some vertex $\L$, then the  
inclusions
$$L\subset p\L^\vee\overset{t}\subset \L \subset p L^\vee,$$
show that $\L/p\L^\vee$ is a subquotient of $pL^\vee/L$ and, in  
particular, the type $t=t(\L)$
is constrained by the structure of $pL^\vee/L$.
More precisely, let $D=D(L) = pL^\vee/L$ with $\kay/\OK$-valued  
hermitian form
determined by $h(\ , ) = p^{-1} \hgs{}{}$. Note that $pL^\vee$ is the  
dual lattice of $L$
with respect to $h$, so that the resulting hermitian form on $D$ is  
nondegenerate.  Below we identify  $\OK/p\OK$ with $\F_{p^2}$. 
\begin{lem}\label{Dform}
(i) Let
$$m=\dim_{\F_{p^2}} D[p] = \dim_{\F_{p^2}} D/pD.$$
Then $m = n - \text{\rm rank}(\red(T))$. \hfb
(ii) There is a bijection between the sets
$$\Ver(L):=\left\{ \L \mid  \text{$\L$ a vertex with $L\subset p\L^ 
\vee$}\right\}$$
and
$$GrD:=\left\{ B \mid \text{$B$ an $\OK$-submodule of $D$ with $pB 
\subset B^\perp \subset B$}\right\}, $$
given by $\L\mapsto \L/L$.
The type $t(\L)$ of $\L$ is the dimension of  the   $
\F_{p^2}$-vector space $B/B^\perp$.
\qed
\end{lem}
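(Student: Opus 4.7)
The plan is to establish both parts by carefully translating between lattices in $C$ that lie between $L$ and $pL^\vee$ and their images in the finite hermitian $\OK$-module $D = pL^\vee/L$.

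The first step is the preliminary observation that the integrality hypothesis $T \in \Herm_n(\OK)$ is equivalent to $\hgs{L}{L} \subset pW$, i.e.\ $L \subset pL^\vee$, so that $D$ is a well-defined finite $\OK$-module. The hermitian form $h = p^{-1}\hgs{\cdot}{\cdot}$ on $pL^\vee$ descends to a well-defined nondegenerate $\kay/\OK$-valued hermitian form on $D$: nondegeneracy follows because any $v \in pL^\vee$ with $\hgs{v}{L}\subset pW$ must lie in $L$ (by definition of $pL^\vee$ and $L$'s being its own double dual).

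For (i), I would bring $T$ into Jordan form by $\GL_n(\OK)$: up to an equivalence we may assume $T = \diag(u_1 p^{a_1}, \ldots, u_n p^{a_n})$ with $u_i \in \OK^\times$. In the corresponding basis $v_1, \ldots, v_n$ of $L$ one has $\hgs{v_i}{v_j} = u_i p^{a_i+1}\delta_{ij}$; the dual basis of $L^\vee$ is $v_i^* = u_i^{-1} p^{-a_i-1} v_i$, so $pL^\vee$ has basis $p v_i^* = u_i^{-1} p^{-a_i} v_i$. Hence $D \cong \bigoplus_i \OK/p^{a_i}\OK$, and both $\dim_{\F_{p^2}}(D/pD)$ and $\dim_{\F_{p^2}}D[p]$ equal $\#\{i : a_i \geq 1\} = n - \#\{i : a_i = 0\} = n - \text{rank}(\red(T))$.

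For (ii), I would verify the bijection directly. Given a vertex $\L$ of level $0$ with $L \subset p\L^\vee$, the vertex condition $p\L^\vee \subset \L$ yields $L \subset \L \subset pL^\vee$, so $B := \L/L$ is a well-defined $\OK$-submodule of $D$. The key identity is $B^\perp = p\L^\vee/L$: for $u \in \L$ and $v \in p\L^\vee$ we have $\hgs{u}{v} \in pW$ because $p\L^\vee \subset \L^\vee$, so $p\L^\vee/L \subset B^\perp$; conversely, any lift $\tilde v \in pL^\vee$ of an element of $B^\perp$ satisfies $\hgs{\tilde v}{\L} \subset pW$, forcing $\tilde v \in p\L^\vee$. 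Given this, the vertex conditions $p\L^\vee \subset \L$ and $\L \subset \L^\vee$ translate exactly to $B^\perp \subset B$ and $pB \subset B^\perp$ respectively. For the inverse map, starting from $B \subset D$ with $pB \subset B^\perp \subset B$, define $\L$ as the preimage of $B$ under the projection $pL^\vee \twoheadrightarrow D$; the same computation shows the preimage of $B^\perp$ equals $p\L^\vee$, so the conditions on $B$ yield $p\L \subset p\L^\vee \subset \L \subset pL^\vee$, which is precisely the vertex condition of level $0$ together with $L \subset p\L^\vee$. The two maps are manifestly mutually inverse. Finally, $t(\L) = \text{length}_{\OK}(\L/p\L^\vee) = \text{length}_{\OK}(B/B^\perp)$; since $pB \subset B^\perp$, the quotient $B/B^\perp$ is an $\F_{p^2}$-vector space whose $\OK$-length coincides with its $\F_{p^2}$-dimension.

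I do not expect any significant obstacle here: the whole argument amounts to carefully tracking the identification of dual lattices in $C$ with orthogonal complements in $D$. The only mildly subtle point is the nondegeneracy of the induced form on $D$, which is needed to ensure the perp operation on submodules of $D$ is an involution compatible with inclusions.
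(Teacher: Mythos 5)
Your proof is correct, and it is the natural, direct verification that the authors evidently considered immediate: the paper states Lemma~\ref{Dform} with no proof (the \qed follows the statement directly). The two small things worth noting: in (i) the diagonalization of $T$ by $\GL_n(\OK)$-equivalence is exactly the hermitian Jordan decomposition used throughout the paper, so that step is legitimate; and in (ii) your key identity $B^\perp = p\L^\vee/L$ (and its converse for the inverse map) is precisely the content that makes the translation between the vertex conditions $p\L^\vee\subset\L\subset\L^\vee$ and the conditions $pB\subset B^\perp\subset B$ mechanical. Everything checks out.
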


\begin{example}\label{00case}
   Suppose that $T \simeq \diag(1_{n_0}, p\,1_{n_1})$, so that $D=D(L)$ is an  
$\F_{p^2}$-vector
space of dimension $n_1$. Then $GrD$ can be identified with the set  
of all
isotropic subspaces $U$ in $D$ via the map $B\mapsto U=B^\perp$.  
Thus, there is a unique
maximal vertex $\L$ in $\Ver(L)$ of type $n_1$ corresponding to $U=0 
$, and $\Cal W(L) = \Cal V(\L)$,
as asserted in Theorem~\ref{propirred}.
\end{example}

\begin{proof}[Proof of Theorem~\ref{propdim}]

\begin{lem}\label{biggertype}
Suppose that $\L\in \Ver(L)$ with
  $\dim_{\F_{p^2}} (\L^\vee\cap pL^\vee)/\L \ge 2$.
Then there exists a lattice $\L_1\in \Ver(L)$ with $\L\subset \L_1$
  and $t(\L_1)>t(\L)$.
\end{lem}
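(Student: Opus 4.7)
The plan is to encode enlargements of $\L$ inside $\L^\vee\cap pL^\vee$ as isotropic subspaces of a certain $\F_{p^2}$-hermitian space, and then to produce one via a dimension-two linear-algebra argument over $\F_{p^2}$. Since $\L$ is a vertex of level $0$, the quotient $\L^\vee/\L$ is killed by $p$ and is an $\F_{p^2}$-vector space of dimension $n-t(\L)$; the form $\hgs{}{}$, which is well-defined on $\L^\vee/\L$ modulo $\OK$ because $\hgs{\L}{\L^\vee}\subset\OK$, induces there a nondegenerate $\F_{p^2}$-valued hermitian form $\bar h$, nondegeneracy coming from $\L^{\vee\vee}=\L$.

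I would first set up the following dictionary. For any $\OK$-lattice $\L_1$ with $\L\subsetneq\L_1\subset\L^\vee$, let $U=\L_1/\L\subset\L^\vee/\L$. The vertex condition $p\L_1^\vee\subset\L_1$ is automatic, since $\L\subset\L_1$ forces $\L_1^\vee\subset\L^\vee$, whence $p\L_1^\vee\subset p\L^\vee\subset\L\subset\L_1$. The remaining inclusion $\L_1\subset\L_1^\vee$ is equivalent to $U$ being isotropic for $\bar h$; and the membership $\L_1\in\Ver(L)$, equivalent to $\L_1\subset pL^\vee$, translates to $U\subset B':=(\L^\vee\cap pL^\vee)/\L$, which has dimension $\ge 2$ by hypothesis. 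A duality count then yields
$$t(\L_1)=2\,\dim_{\F_{p^2}}U + t(\L),$$
using $\dim(\L^\vee/\L_1^\vee)=\dim(\L_1/\L)$ together with the filtration $p\L_1^\vee\subset p\L^\vee\subset\L\subset\L_1$. Hence any proper enlargement strictly increases the type.

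The remaining step, which is the key point, is to produce a nonzero isotropic line inside $B'$. If the restriction of $\bar h$ to $B'$ is degenerate, any nonzero element of its radical is automatically isotropic. If the restriction is nondegenerate, then since $\dim B'\ge 2$ it contains a nondegenerate two-dimensional hermitian subspace; the surjectivity of the norm $N\colon\F_{p^2}^\times\to\F_p^\times$ implies that every nondegenerate two-dimensional hermitian space over $\F_{p^2}$ is hyperbolic and hence contains a nonzero isotropic vector. Lifting such a vector gives some $v\in(\L^\vee\cap pL^\vee)\setminus\L$ with $\hgs{v}{v}\in\OK$, and $\L_1:=\L+\OK v$ is then a vertex of level $0$ in $\Ver(L)$ with $t(\L_1)=t(\L)+2$. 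The main point to get right is the dictionary above---especially the observation that $p\L_1^\vee\subset\L_1$ is free once $\L_1\supset\L$, so that only the isotropy condition on $U$ needs to be imposed when building $\L_1$.
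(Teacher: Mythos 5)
Your proof is correct and takes essentially the same approach as the paper: pass to the nondegenerate $\F_{p^2}$-hermitian space $\L^\vee/\L$, find an isotropic line in the image of $\L^\vee\cap pL^\vee$, and pull it back to enlarge $\L$. You simply supply more detail than the paper (justifying the existence of the isotropic line via norm surjectivity, and computing $t(\L_1)=t(\L)+2\dim U$ by an explicit duality count), but the underlying argument is identical.
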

\begin{proof}
Note that the $\F_{p^2}$-vector space $\L^\vee/\L$ has a non- 
degenerate $\kay/\OK$-valued hermitian
form determined by $\hgs{}{}$.  If $\dim_{\F_{p^2}} (\L^\vee\cap pL^ 
\vee)/\L \ge 2$, then
this subspace contains an isotropic line $\ell$.  Let $\L_1 = \pr^{-1} 
(\ell)$
where $\pr: \L^\vee \rightarrow \L^\vee/\L$.  By construction, $\L 
\subset \L_1\subset \L^\vee\cap pL^\vee$,
so that, in particular, $L\subset p\L_1^\vee$ and $p\L_1^\vee\subset p 
\L^\vee\subset \L\subset \L_1$.
Also, since $\ell\subset \ell^\perp$ in $\L^\vee/\L$, we have $\L_1 
\subset \L_1^\vee = \pr^{-1}(\ell^\perp)$.
Thus $\L_1\in \Ver(L)$ and the index $t_1$ of $p\L_1^\vee$ in $\L_1$  
is strictly larger than $t$.
\end{proof}

Next, we record a few more general facts.
For any $\L\in \Ver(L)$,
let $B\in Gr D(L)$ be the $\OK$-submodule of
$D=D(L)$ associated to $\L$. Since the image of $p\L^\vee$ in $D$ is  
$B^\perp$,  the image of $\L^\vee\cap pL^\vee$ in $D$ is
$$\{\, x\in D\mid px\in B^\perp\} = (pB)^\perp.$$
In particular, note that $D[p]\subset (pB)^\perp$ and that the  
quotient $(pB)^\perp/B^\perp$ is killed by $p$.
Since the pairing on $D$ induced by $h$ is perfect,  we have
$$pB\subover{r} B^\perp\subover{t}B\subover{r}(pB)^\perp,$$
where $r=\dim (\L^\vee\cap pL^\vee)/\L$.  But the inclusion on the  
right implies that
the subspace $B[p]\subset D[p]$ has codimension at most $r$, so that  
we obtain
\begin{equation}\label{squeeze}
m\ge t+r = \dim B/pB =\dim B[p] \ge m-r.
\end{equation}
This gives
$$2r \ge m-t = m-t_0 + (t_0-t).$$

\begin{lem}\label{tight} If $\L\in \Ver(L)$ with $t(\L)<t_0$, then
either
$$\dim_{\F_{p^2}}( \L^\vee\cap pL^\vee)/\L \ge 2,$$
or the special case \hfb
{\rm ($\star\star$)}\qquad  $m=t_0$, $t= m-2$, $r=1$,\  and $\ \dim B[p]=m-1$\hfb
holds. 
\end{lem}

\begin{proof}
By assumption, $t_0-t\ge 2$ is even so that $r\ge 2$, as claimed,  
except in the special case.
\end{proof}

\begin{lem} In the special case $(\star\star)$, the lattice $\L_1 = \L^\vee 
\cap pL^\vee$
is in $\Ver(L)$ with $t(\L_1)=t_0$.
\end{lem}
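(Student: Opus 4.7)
The strategy is to work entirely in $D = pL^\vee/L$ via the bijection of Lemma~\ref{Dform}, and to identify $B_1 := \L_1/L$ as an explicit submodule of $D$.  From the discussion preceding the statement, $B_1 = (pB)^\perp$, where $B = \L/L \in GrD$.  Two things must be verified: first, that $B_1 \in GrD$, i.e.\ $pB_1 \subset B_1^\perp \subset B_1$, so that $\L_1 \in \Ver(L)$; and second, that $\dim_{\F_{p^2}}(B_1/B_1^\perp) = t_0 = m$.

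The crucial observation, valid in the special case $(\star)$, is the identity
\begin{equation*}
B_1 \ = \ B + D[p] \qquad \text{in } D.
\end{equation*}
Indeed, the inclusion $D[p] \subset (pB)^\perp = B_1$ (noted in the derivation of (\ref{squeeze})) induces a map
\begin{equation*}
D[p]/B[p] \,\lra\, B_1/B,
\end{equation*}
whose source has $\F_{p^2}$-dimension $m - (m-1) = 1$ and whose target has dimension $r = 1$; so this map is an isomorphism, yielding the asserted equality.  Combined with $B_1^\perp = \bigl((pB)^\perp\bigr)^\perp = pB$, which follows from perfectness of the pairing on $D$, the identity gives $pB_1 = p(B + D[p]) = pB = B_1^\perp$, and in particular the chain $pB_1 \subset B_1^\perp \subset B_1$ holds.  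Hence $\L_1 \in \Ver(L)$.

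For the type, the quotient $B_1/B_1^\perp = B_1/pB$ admits the filtration $pB \subset B \subset B_1$; the successive quotients $B/pB$ and $B_1/B$ have $\F_{p^2}$-dimensions $\dim B[p] = m-1$ and $r = 1$ respectively, and the whole quotient $B_1/pB$ is killed by $p$ because $pB_1 = pB$.  Therefore
\begin{equation*}
t(\L_1) \ = \ \dim_{\F_{p^2}} B_1/pB \ = \ (m-1) + 1 \ = \ m \ = \ t_0,
\end{equation*}
as required.  The only step with any content is the dimension count that forces $D[p]/B[p] \to B_1/B$ to be an isomorphism; once this is in place, both inclusions and the type computation are pure bookkeeping, and no serious obstacle is anticipated.
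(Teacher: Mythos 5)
Your proof is correct. It rests on the same key identification as the paper's argument, namely that $\L_1/\L$ is one-dimensional and generated by the class of an element of $D[p]$ not in $B$ (the paper phrases this as $\L_1=\L+\OK x_0$ for $\bar x_0\in D[p]\setminus B[p]$; you phrase it as $B_1=B+D[p]$, obtained by a dimension count). The packaging differs: you stay entirely inside $D$ and deduce $pB_1\subset B_1^\perp$ from $B_1^\perp=((pB)^\perp)^\perp=pB$ together with $pD[p]=0$, whereas the paper passes back to lattices and verifies $\hgs{\L_1}{\L_1}\subset\OK$ by a direct inner-product computation using $px_0\in L$ — these are the same fact, since $p\bar x_0=0$ in $D$ is precisely $px_0\in L$. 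Your version has the minor advantage of making the duality do the work and of spelling out the type computation $\dim_{\F_{p^2}}(B_1/B_1^\perp)=m=t_0$, which the paper treats as evident once $\L_1=\L+\OK x_0$ is in hand.
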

\begin{proof}
In this case,  we have the picture.
$$pB\subover{1} B^\perp\subover{m-2}B\subover{1}(pB)^\perp.$$
For any $x_0\in pL^\vee$ whose image $\bar x_0$ in $D$ lies in $D[p] 
\setminus B[p]$, we have $(pB)^\perp = B + \OK\cdot \bar x_0$.
Thus,  $\L_1 = \L^\vee\cap pL^\vee = \L + \OK x_0$.
As in the proof of Lemma \ref{biggertype}, it follows that $L\subset p 
\L_1^\vee$ and $p\L_1^\vee\subset p\L^\vee\subset \L\subset \L_1$.
But now
$$\hgs{\L_1}{\L_1} = \hgs{\L^\vee\cap pL^\vee}{\L+\OK x_0} \subset \OK$$
because $\hgs{\L^\vee}{\L} = \OK$ and
$$\hgs{pL^\vee}{x_0} = p\, h(pL^\vee, x_0) \subset h(pL^\vee,L)\subset 
\OK,$$
since $p x_0\in L$.  Thus $\L_1\subset \L_1^\vee$, so that $\L_1$ is  
in $\Ver(L)$, as claimed.
\end{proof}
The previous lemmas show that every $\L\in \Ver(L)$ is contained in  
some $\L_0\in \Ver(L)$ with $t(\L_0)=t_0$.
On the other hand, by Lemma~\ref{Dform}, the type of any lattice $\L 
\in \Ver(L)$ is at most $t_0$.
This completes the proof of Theorem~\ref{propdim}.
\end{proof}

\begin{cor}
Suppose that $\L\in \Ver(L)$ with $t_0=t(\L)$ maximal and let $B\in  
Gr D(L)$ be the associated $\OK$-submodule.
Then
$$r=\dim_{\F_{p^2}} (\L^\vee\cap pL^\vee)/\L = \begin{cases} 0&\text 
{if $m$ is odd,}\\
\nass
1&\text{if $m$ is even.}
\end{cases}
$$
Moreover, $p^{-1}L\cap pL^\vee \subset \L$.
\end{cor}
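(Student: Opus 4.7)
The plan is to extract both parts of the corollary directly from dimension inequalities already established in the proof of Theorem~\ref{propdim}. The key identity is
$$\dim_{\F_{p^2}} B[p] = t + r,$$
where $B = \L/L \subset D = pL^\vee/L$, $t = t(\L)$, and $r = \dim_{\F_{p^2}}(\L^\vee \cap pL^\vee)/\L$. This comes from the chain $pB \subset B^\perp \subset B \subset (pB)^\perp$: the perfectness of the hermitian pairing on $D$ (so $|X|\cdot|X^\perp| = |D|$ for every submodule $X$) forces the outer two indices of the chain to coincide, each equal to $r$. Hence $\dim_{\F_{p^2}} B/pB = t + r$, and this equals $\dim_{\F_{p^2}} B[p]$ since $B$ is a finite $\OK$-module.

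Next I would combine two containments. The inclusion $B[p] \subset D[p]$ directly gives $t + r \le m$, while the inclusion $D[p] \subset (pB)^\perp$ (observed in the paragraph preceding equation~(\ref{squeeze})) produces an injection $D[p]/B[p] \hookrightarrow (pB)^\perp/B$, hence $m - (t+r) \le r$. Together, these give the two-sided bound $m - 2r \le t \le m - r$. Now substitute $t = t_0$, the largest odd integer $\le m$: if $m$ is odd then $t_0 = m$ and the upper bound forces $r = 0$; if $m$ is even then $t_0 = m - 1$, the upper bound yields $r \le 1$, and the lower bound $2r \ge m - t_0 = 1$ forces $r \ge 1$ since $r$ is an integer, so $r = 1$. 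This proves the first assertion of the corollary.

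In either case $t_0 + r = m$, so $\dim_{\F_{p^2}} B[p] = m = \dim_{\F_{p^2}} D[p]$ and hence $B[p] = D[p]$. Unwinding the identifications $D[p] = (p^{-1}L \cap pL^\vee)/L$ and $B = \L/L$ translates this equality into the inclusion $p^{-1}L \cap pL^\vee \subset \L$, which is the second assertion. There is no genuine obstacle here — the argument is pure bookkeeping with dimensions of $\OK$-modules equipped with the perfect hermitian pairing on $D$, and every inequality it uses is already assembled in the discussion preceding the corollary.
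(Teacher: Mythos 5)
Your proposal is correct and takes essentially the same route as the paper: both read off $r$ by substituting $t=t_0$ into the two-sided bound $m - 2r \le t \le m - r$ already encoded in (\ref{squeeze}), and both deduce the last assertion from $B[p]=D[p]$, which follows because $\dim B[p]=t_0+r=m=\dim D[p]$. You simply unfold the derivation of (\ref{squeeze}) and the translation of $B[p]=D[p]$ into lattice language more explicitly than the paper does.
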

\begin{proof}  If $m$ is odd, then $t=t_0=m$ in (\ref{squeeze}), so  
that $r=0$, while, if $m$ is even, then
$t=t_0=m-1$ and (\ref{squeeze}) forces $r=1$.   The last assertion  
follows from the fact that $B[p]=D[p]$, since both have dimension $m$. 
\end{proof}

\begin{proof}[Proof of Theorem~\ref{propirred}.]
Suppose that $T$ has the given Jordan decomposition with respect to  
some basis $e_1, \dots, e_n$ of $L$,
and note that $pL^\vee$ has basis $p^{-a_i}e_i$, where $h(e_i,e_i)= p^ 
{a_i}$.  If  $\max(n^+_{\text{\rm \ even}}, n^+_{\text{\rm \ odd}})\ge 2$, i.e., if
\begin{equation}\label{excess}
n^+_{\text{\rm \ even}}=\sum_{\substack{i\ge 2 \\ \snass i \text{\rm  
\ even}}} n_i \ge 2 \qquad
\text{(resp. }\quad
n^+_{\text{\rm \ odd}}=\sum_{\substack{i\ge 3 \\ \snass i \text{\rm \  
odd}}} n_i \ge 2 \ ),
\end{equation}
we can scale the $e_i$'s to a basis $f_1, \dots, f_n$ of $C$ for  
which the hermitian form $h$
has matrix
$$ T_0 = \diag(1_{n'},p\,1_{n''}, p^a \,1_2)$$
with $a=2$ (resp. $a=3$).
Let $L_0$ be the lattice spanned by $f_1, \dots, f_n$, and note that
$$pL_0^\vee =[ f_1, \dots, f_{n'}, p^{-1} f_{n'+1}, \dots, p^{-1} f_ 
{n'+n''}, p^{-a} f_{n-1}, p^{-a} f_n].$$
  Also note that $L\subset L_0$ so that $\Ver(L_0)\subset \Ver(L)$.
Take $u\in \Z_{p^2}^\times$ with $uu^\s=-1$, and let
$$g_1 = f_{n-1}+uf_n, \qquad g_2 = f_{n-1}-uf_n.$$
These are isotropic vectors in $L_0$ with $h(g_1,g_2) = 2 p^a$.
Let
$$\L_1 = [f_1, \dots, f_{n'},p^{-1} f_{n'+1}, \dots, p^{-1} f_ 
{n'+n''}, p^{-1}g_1, p^{-a} g_2],$$
and
$$\L_2 = [f_1, \dots, f_{n'},p^{-1} f_{n'+1}, \dots, p^{-1} f_ 
{n'+n''}, p^{-a}g_1, p^{-1} g_2].$$
Then
$$p\L_1^\vee = [f_1, \dots, f_{n'}, f_{n'+1}, \dots,  f_{n'+n''},  
g_1, p^{1-a} g_2],$$
and
$$p\L_2^\vee= [f_1, \dots, f_{n'}, f_{n'+1}, \dots,  f_{n'+n''}, p^{1- 
a}g_1,  g_2],$$
so that $\L_1$ and $\L_2$ are vertices in $\Ver(L_0)$ of type $n''+2$.
Suppose that $\L_1$ and $\L_2$ were contained in a common vertex $\L 
\in \Ver(L)$.
Then we would have
$$\L_1 \subset \L \subset \L^\vee \subset \L_1^\vee,
\qquad\text{and}\qquad \L_2\subset \L \subset \L^\vee\subset  \L_2^ 
\vee,$$
and hence $h(\L_1,\L_2) \subset p^{-1}\OK$.  But $h(p^{-a}g_2,p^{-a} 
g_1) = 2\,p^{-a}$, so this is not the case.
Thus there is more than one vertex $\L\in \Ver(L)$ with $t(\L)=t_0$  
when (\ref{excess}) holds.

Now we prove the converse. Suppose that 
$\max(n^+_{\text{\rm \ even}}, n^+_{\text{\rm \ odd}})\le 1$.  
Then there are several cases for the Jordan decomposition of $T$. 
First suppose that $n^+_{\text{\rm \ even}}=n^+_{\text{\rm \ odd}}=1$, so that $T$ has Jordan decomposition
$\diag(1_{n_0}, p 1_{n_1}, p^a, p^b)$ with $2\le a<b$ and $a+b$ odd.   
Thus, $L= [e_1,\dots,e_n]$ and
$$pL^\vee = [e_1, \dots, e_{n_0}, p^{-1}e_{n_0+1}, \dots, p^{-1}e_ 
{n-2}, p^{-a} e_{n-1},  p^{-b}e_n].$$
Recall that,  by  (\ref{CoddT}),  $\ord \det (T)$ is odd. Thus  $n_1$ must be even, $t_0 = n_1+1$, and
any $\L\in \Ver(L)$ with $t(\L)=t_0$ contains the lattice
$$p^{-1}L\cap pL^\vee = [e_1, \dots, e_{n_0}, p^{-1}e_{n_0+1}, \dots,  
p^{-1}e_{n-2}, p^{-1} e_{n-1},  p^{-1}e_n].$$
 Let $L'=[e_{n-1}, e_n]$, a lattice in the two-dimensional hermitian vector space $V'$ spanned by $e_{n-1}, e_n$. Then the map 
 $$
 \Lambda\mapsto \Lambda\cap V'
 $$
 gives a bijection between the lattices $\Lambda\in \Ver(L)$ with $t(\Lambda)=t_0$, and the lattices $\Lambda'\in\Ver(L')$ with $t(\Lambda')=1$.

 Thus, we may assume that $n=2$ and that $T=\diag(p^a,p^b)$ where $2\le a,\, b$ and $a$ is even. 
 We proceed by  
explicit computation.
Suppose that $L=[e_1,e_2]$ and write $p\L^\vee = [e_1,e_2]S$ for $S 
\in \GL_2(\kay)$
unique up to right multiplication by an element of $\GL_2(\OK)$.
Then we have
$\L  = [e_1,e_2] T^{-1}{}^t\bar{S}^{-1}$ and
$pL^\vee=[e_1,e_2]T^{-1}$,
and the various inclusions amount to the following conditions:
$$\begin{matrix}
L&\subset&p\L^\vee & \iff  &S^{-1}&\in M_2(\OK)\\
\nass
\nass
p\L^\vee&\subset&\L &\iff  & {}^t\bar{S}TS&\in M_2(\OK)\\
\nass
\nass
\L&\subset&\L^\vee&\iff & p S^{-1} T^{-1}{}^t\bar{S}^{-1}&\in M_2(\OK).
\end{matrix}$$
Moreover, $\L$ has type $t(\L) =t_0=1$ if and only if $\ord(\det {}^t 
\bar{S}TS)=1$.
Assuming that this is the case, we may modify $S$ on the right by an  
element of $\GL_2(\OK)$
so that $T_1 := {}^t\bar{S}TS = \diag(1, p)$.
Note that the last of the above conditions is then immediate.
Write
$S = \diag(p^{-a/2},p^{-(b-1)/2}) S_0$.
Then ${}^t\bar{S}_0 T_1 S_0 = T_1$ so that $u=\det(S_0)$ has norm $1$  
and hence is a unit.
After replacing $S_0$ by $S_1=\diag(1,\bar u)S_0$, so that  $\det(S_1) 
=1$, a short calculation shows that
$$S_1= \begin{pmatrix}\a&\b\\ -p^{-1}\bar \b&\bar \a\end{pmatrix}$$
for $\a$ and $\b\in \kay$ with $1=\a\bar\a + p^{-1}\b\bar\b$. Since  
the two terms on the right side of this last identity
have $\ord$'s of opposite parity, we must have $\ord(\a)=0$ and $\ord 
(\b)\ge1$, so that
$S_1$ and $S_0$ lie in $\GL_2(\OK)$.
Thus
$$\L = [p^{-a/2}e_1,p^{-(b+1)/2}e_2]$$
%\marbull%\qquad (\ \text{resp.}\quad \L  = [p^{-(a-1)/2}e_1,p^{-b/2}e_2]\ )$$
is the unique vertex in $\Ver(L)$  with $t(\L)=1$.  % if $a$ is even
%\marginpar{I thought $a$ is even?}
%(resp. $a$ is odd).
%In case $a$ is odd, the analogous reasoning gives $\L = [p^{-(a-1)/2} 
%e_1,p^{-b/2}e_2]$
%as unique vertex in $\Ver(L)$ with $t(\L)=1$.
Of course, this argument just amounts to the fact that the isometry  
group of $T_1$
is anisotropic so that the building of this group reduces to a single  
point.

The cases where $(n^+_{\text{\rm \ even}}, n^+_{\text{\rm \ odd}}) = (1,0)$,  $(0,1)$, or $(0,0)$, 
i.e., where $T=\diag(1_{n_0}, p 1_{n_1}, p^a)$ or $T=\diag(1_{n_0}, p 1_{n_1})$ are easier and will  
be omitted.  The $(0,0)$ case is discussed in Example~\ref{00case}.

This completes the proof of Theorem~\ref{propirred}.
\end{proof}

\section{Intersection multiplicities}

In this section, we fix $i, j$ and consider a non-empty special cycle
$\ZZ_{i,j}(\xx)$, where we assume that $\xx$ is an $n$-tuple of special  
homomorphisms
whose scaled fundamental matrix $\tilde T=p^{2i-j}h(\xx,\xx)=p^{2i-j-1}\hgs{\xx}{\xx}$, which  
is still assumed to be non-degenerate, satisfies
$$ \text{\rm rank}({\rm red}(\tilde T)) \ge n-2.$$
By Corollary \ref{point}, this implies that the cycle $\ZZ_{i, j}(\xx)$ in
$\Cal N_{2i}^0 \times \Cal N_j$ has underlying reduced scheme
of  dimension $0$ which, in fact, reduces to a single point.  Write $ 
\ZZ_{i,j}(\xx) =\Spec R(\xx)$
for a local $W$-algebra $R(\xx)$ with residue field $\F$.
The arithmetic degree  of $\ZZ_{i,j}(\xx)$ is then, by definition, 
$$\degh(\ZZ_{i, j}(\xx)) = \text{length}_W \, R(\xx)\,\cdot \log p,$$
where $\text{length}_W \, R(\xx)$ is the length of $R(\xx)$ as a $W$- 
module.

\begin{theo}\label{theomult} Suppose that $\tilde T$ is $\GL_n(\OK)$- 
equivalent to
$\diag(1_{n-2}, p^a,p^b)$, where $0\le a\le b$. Then $R(\xx)$ is of  
finite length and
$$\degh(\ZZ_{i,j}(\xx)) = \log p \cdot \frac12\,\sum_{l =0}^{a} p^l (a 
+b+1-2l).$$
\end{theo}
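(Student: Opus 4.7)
The overall strategy is to reduce the $n$-dimensional deformation problem to a two-dimensional one by ``peeling off'' the $n-2$ special homomorphisms whose mutual inner products form the unimodular block $1_{n-2}$ in the Jordan decomposition of $\tilde T$. Concretely, after a change of basis I may assume $\xx = [x_1,\ldots,x_{n-2},y_1,y_2]$ where the sublattice $L_0 = \OK x_1+\cdots+\OK x_{n-2}\subset \LL$ is unimodular with respect to the hermitian form $p^{2i-j-1}\hgs{}{}$, and $h(y_k,x_r)=0$. By Proposition~\ref{divisor}, each $\ZZ_{i,j}(x_r)$ is a relative divisor; the unimodularity of $L_0$ should imply that the intersection $\bigcap_r \ZZ_{i,j}(x_r)$ is a formally smooth closed formal subscheme of $\Cal N_j$ of relative dimension $1$ over $W$, identified with the deformation space of a single $\OK$-stable factor of $\X$ complementary to the piece corresponding to $L_0$ (which stays rigid along its canonical lift). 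Thus the problem is reduced to computing $\mathrm{length}_W R([y_1,y_2])$, where now the ambient space is this smooth $2$-dimensional relative formal subscheme.

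The remaining $2$-dimensional deformation problem concerns a formal $\OK$-module of dimension $1$, height $2$, and a pair of special homomorphisms $y_1,y_2\in \VV$ with fundamental matrix (after scaling) $\diag(p^a,p^b)$ where $a+b$ is odd by the parity condition~(\ref{CoddT}). In this setting, I would invoke the unitary analogue of Gross's theory of quasi-canonical lifts, which is to be developed in sections 6--7 of the paper. The key construction is a family of \emph{quasi-canonical divisors} $\ZZ^{(l)}$, indexed by the ``level'' $l\ge 0$, inside the reduced $2$-dimensional ambient formal scheme; each $\ZZ^{(l)}$ should meet the closed point $\xi$ transversally after a suitable base change, and its local ring should have an explicit description in terms of the ring of integers $\OK[\pi_l]$ of a ramified quadratic extension of $\kay$ (or its unramified/inert variant). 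The cycle $\ZZ_{i,j}([y_1,y_2])$ is then the pullback along one of the $y_k$'s of a divisor $\ZZ_{i,j}(y_1)$ of ``order'' $b$, restricted to the locus where $y_2$ also lifts, which cuts down further.

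Once the quasi-canonical divisors are in place, the length computation becomes a bookkeeping exercise: one decomposes
\[
\mathrm{length}_W R(\xx) \;=\; \sum_{l=0}^{a} (\text{multiplicity at level } l)\cdot(\text{contribution of level } l),
\]
where level $l$ runs only up to $a = \min(a,b)$ because beyond that the second homomorphism $y_2$ can no longer extend. The analogue of Gross's lifting lemma (to be established in section 7) should show that at level $l$ there are $p^l$ quasi-canonical lifts above a fixed residual lift, each contributing the ``thickness'' $(a+b+1-2l)$ measuring how far $y_2$ deforms along the quasi-canonical divisor of level $l$, and the factor $\frac12$ arises from the fact that each quasi-canonical divisor is defined over a quadratic extension of $W$ whose two geometric branches are identified by the Galois involution. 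Summing yields exactly $\frac{1}{2}\sum_{l=0}^{a} p^l(a+b+1-2l)$.

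The main obstacle is the second step: constructing the unitary quasi-canonical divisors and proving the lifting theorem that controls how far a second special homomorphism extends along them. This requires a careful analysis via Zink's theory of displays and windows (in particular the equal-characteristic length computation of Proposition~\ref{length} attributed to Zink in the introduction), since the standard Gross framework for CM elliptic curves has to be adapted to the signature $(1,n-1)$ unitary setting with its specific polarization behavior. Once that lifting theorem is in hand, the reduction to $n=2$ in the first step and the final arithmetic summation are essentially formal.
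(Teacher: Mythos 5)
Your high-level plan matches the paper's: reduce to $n=2$ by splitting off the unimodular block, construct unitary quasi-canonical divisors, and obtain the answer by summing Gross-type intersection numbers. The first reduction is essentially what the paper does, though more cleanly — the paper forms the idempotent $e_0 = \sum_{r\le n-2} y_r\circ y_r^*$ from the unimodular part, splits $X = e_0X\times(1-e_0)X$ along with the polarization, and identifies the residual deformation problem with that of $(\YYbar\times\YY,\iota,\lambda)$ with the two remaining special homomorphisms $y_{n-1}=\inc_1\circ\Pi^a$, $y_n=\inc_2\circ\Pi^b$. Your guess that sections 6--7 furnish quasi-canonical divisors and a lifting theorem for homomorphisms along them is also correct (the divisors are $\ZZ_s$, images of $\Spf W_s\to\M$ given by $X^{(s)}=\OK\otimes F_s$).

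Where your sketch drifts is in the final bookkeeping, and there is a genuine structural ingredient you do not name. The crucial step in the paper is the clean decomposition of each individual divisor as a sum of quasi-canonical ones: $\ZZ(y_1)=\sum_{s\text{ even},\,0\le s\le a}\ZZ_s$ and $\ZZ(y_2)=\sum_{s\text{ odd},\,1\le s\le b}\ZZ_s$, with completeness of the decomposition verified by computing intersection multiplicities with the special fiber $\M_p$ using Zink's equal-characteristic length formula (Proposition~\ref{length}); you invoke Zink's result but not for this purpose. The intersection number is then computed as $\sum_{s\text{ even}\le a}\ZZ_s\cdot\ZZ(y_2)$, each term given by Theorem~\ref{tlinearThm}. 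Your explanations of the factors $p^l$ and $\tfrac12$ are not right: the $p^l$ arises after reorganizing the double sum $\sum_s(\tfrac{p^s-1}{p-1}+\tfrac12(b+1-s)e_s)$ (using $e_s=p^{s-1}(p+1)$), not from counting $p^l$ quasi-canonical lifts above a fixed residual lift; and the $\tfrac12$ traces back to Gross's multiplicity formula $n_{0,0}(\psi)=\tfrac12(l+1)e$, not to a Galois identification of branches (each $\ZZ_s$ is irreducible, cut out by a single closed immersion $\Spf W_s\hookrightarrow\M$). These are not merely cosmetic: without the explicit decomposition of each $\ZZ(y_k)$ into $\ZZ_s$'s and without the pairwise intersection formula, the sum $\tfrac12\sum_{l=0}^a p^l(a+b+1-2l)$ does not follow from the outline as written.
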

Note that $a+b \equiv \ord(\det(T))\!\!\mod 2$ is odd, cf.~(\ref{CoddT}) and the remarks after  
Corollary \ref{point},  so that, in fact, $0\leq a<b$.  As in the previous section,
we may reduce to the case $i=j=0$, which we assume from now on.  
Accordingly we write
$\ZZ(\xx)$ for $\ZZ_{0,0}(\xx)$.

The first step in the proof is to reduce to the case $n=2$.
\begin{lem} \label{redto2}
  Suppose that $\yy = \xx g$ for $g\in \GL_n(\OK)$ has matrix of  
inner products
$h(\yy,\yy) = \diag(1_{n-2}, p^a,p^b)$ where $a$ is even and $b$ is  
odd {\rm (but $a$ and $b$ are not ordered by size)}.
Then $\ZZ(\xx)(\F) =\ZZ(\yy)(\F)$ corresponds to the lattice $A=\tau(A)$
given  by\footnote{Here we have written $y_i$ for $(y_i)_0(\bar 1_0)$, so that $y_i\in C$. }
$$A = Wy_1+\dots+Wy_{n-2}+W p^{-a/2}y_{n-1}+ W p^{-(b+1)/2} y_n.$$

If $(\bold X, \iota,\l_{\bold X}, \rho_{\bold X})$ is the  
corresponding $p$-divisible group, then
there is an isomorphism
$$\underbrace{\YYbar\times\dots\times\YYbar}_{n-1}\times \YY  
\isoarrow \bold X$$
such that, as elements of $\Hom_{\OK}(\YYbar,\bold X)$,
$$\rho_{\bold X}^{-1}\circ y_i =
\begin{cases} \inc_i &\text{if $i\le n-2$,}\\
\nass
\inc_i\circ\Pi^a&\text{if $i=n-1$,}\\
\nass
\,\inc_i\circ \Pi^b&\text{if $i=n$,}
\end{cases}
$$
where $\inc_i$ denotes the inclusion into the $i$-th factor of the  
product. \hfb
Here $\Pi$ denotes the fixed uniformizer in $O_D=\End(\YYbar)$, cf.  
Remark \ref{rem10case}.
\end{lem}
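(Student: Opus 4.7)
The equality $\ZZ(\xx)(\F) = \ZZ(\yy)(\F)$ is immediate from the $\GL_n(\OK)$-invariance of the special cycles under reparametrization of the defining tuple, which was recorded in Remark \ref{scalingfor}. So my plan is to work with $\yy$ throughout and produce both the claimed lattice $A$ and the claimed product decomposition of $\bold X$.

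The first step is to verify that $A = Wy_1 + \cdots + Wy_{n-2} + Wp^{-a/2}y_{n-1} + Wp^{-(b+1)/2}y_n$ is the unique lattice in $\Cal D_0$ attached via Proposition \ref{proj} to the unique point of $\ZZ(\yy)(\F)$. The key computation uses Lemma \ref{relVC}, which gives $\hgs{y_i}{y_j} = p\,h(y_i,y_j)$. After reordering the basis of $A$ as $(p^{-(b+1)/2}y_n, y_1, \ldots, y_{n-2}, p^{-a/2}y_{n-1})$, the matrix of $\hgs{}{}$ becomes $\diag(1,p,\ldots,p)$. By the remark after Proposition \ref{inken1}, $A$ is then a type-$1$ (superspecial) vertex at level $0$; and since each $y_i \in C = N_0^{\tau=1}$ and scaling by $p^k$ preserves $\tau$-invariance, one has $\tau(A) = A$. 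The inclusion $L \subset pA^\vee$ is checked by hand, and the uniqueness of such $A$ follows from Theorem \ref{propirred}, as the Jordan type of $\tilde T$ satisfies condition $(\star)$.

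The second step is to decompose $\bold X$. The orthogonal basis of $A$ with inner products $\diag(1,p,\ldots,p)$ gives a decomposition of $A$ into rank-$1$ $\OK$-stable orthogonal summands. This extends to a decomposition of the full Dieudonn\'e module $M = M_0 \oplus M_1$ of $\bold X$ (using $M_1 = F^{-1}(pA^\vee)$, which respects the orthogonality). The summand generated by the norm-$1$ vector $p^{-(b+1)/2}y_n$ is isomorphic, as polarized $\OK$-linear Dieudonn\'e module, to that of $\YY$ (the signature $(1,0)$ base point of Remark \ref{rem10case}), while each summand generated by a norm-$p$ vector is isomorphic to that of $\YYbar$. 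Assembling these gives the claimed isomorphism $\YYbar^{n-1} \times \YY \isoarrow \bold X$.

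Finally, the formulas for the maps $\rho_{\bold X}^{-1} \circ y_i$ are obtained by expressing each $y_i$ in this orthogonal basis of $A$ and identifying the resulting coefficient with an element of $\End_{\OK}(\YYbar) = \OK$ or $\Hom_{\OK}(\YYbar, \YY) \subset O_D$. For $i \le n-2$, $y_i$ is already the generator of the $i$-th $\YYbar$-slot, giving $\inc_i$. For $i = n-1$, $y_{n-1}$ is $p^{a/2}$ times the generator of the last $\YYbar$-slot, which equals $\Pi^a$ since $\Pi^2 = p$ and $a$ is even; this yields $\inc_{n-1} \circ \Pi^a$. For $i=n$, the analogous computation uses that odd powers of $\Pi$ give $\OK$-linear maps $\YYbar \to \YY$ (conjugation by $\Pi$ implements the Galois involution on $\OK$). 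The main obstacle is the careful bookkeeping in the decomposition $O_D = \OK \oplus \OK\cdot\Pi$ and the interplay between the two $\OK$-actions on the underlying $p$-divisible groups of $\YY$ and $\YYbar$; once these identifications are pinned down, the verification reduces to a power-of-$p$ count in each factor.
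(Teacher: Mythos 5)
Your proposal is correct and follows essentially the same route as the paper: identify $A$ as the $\tau$-invariant type-$1$ vertex underlying the single point of $\ZZ(\yy)(\F)$, split the Dieudonn\'e module $M=M_0\oplus M_1$ (with $M_1=F^{-1}(pA^\vee)$) into orthogonal rank-one $\OK$-summands matching $\MMbar^0$ for the norm-$p$ vectors and $\MM^0$ for the norm-$1$ vector, and then read off $y_i$ as $\inc_i$ composed with the appropriate power of $\Pi$ by a power-of-$p$ count. The only part you leave more implicit than the paper is the final bookkeeping for $i=n$ (the paper writes out $\bar1_0\mapsto p^{s+1}u_n$, $\bar1_1\mapsto p^{s}v_n$ explicitly to land on $\inc_n\circ\Pi^b$), but you correctly identify the mechanism, so this is the same proof.
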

\begin{proof} Let $a=2r$ and $b=2s+1$. For the given lattice $A$, we  
have
$$pA^\vee = Wy_1+\dots+Wy_{n-2}+W p^{-r}y_{n-1}+ W p^{-s} y_n.$$
so that $pA^\vee\subset A$ with index $1$.  Moreover, since $\tau(y_i) 
=y_i$,
we have $\tau(A)=A$ so that $A=\L(A)$ is a vertex and $\ZZ(\xx) =  
\Cal V(\L(A))$ as claimed.
For convenience, we let $u_i=y_i$, for $i\le n-2$, $u_{n-1}=p^{-r}y_ 
{n-1}$ and
$u_n=p^{-s-1}y_n$. The Dieudonn\'e module $M= M_0+M_1$ associated to $A$
in Proposition \ref{inken1} has
$M_0=A$ and $M_1= Wv_1+\dots+W v_n$, where
$v_i= F^{-1}u_i$, for $i\le n-1$ and $v_n = p F^{-1}u_n$. Here recall  
that $FM_1= pA^\vee$.
Then, since
$$\gs{u_i}{Vu_j} = \gs{u_i}{Fu_j} =\delta\,\hgs{u_i}{u_j} = \delta\, 
\delta_{ij}\,\begin{cases} p&\text{if $i\le n-1$,}\\
1&\text{if $i=n$,}
\end{cases}
$$
we have $\gs{u_i}{v_j} = \delta\,\delta_{ij}$ for all $i$ and $j$.

Recalling that $\YYbar$ has Dieudonn\'e module $\MMbar^0 = W\bar1_0+W  
\bar1_1$, with
$F\bar1_0=p\bar1_1$, $F\bar1_1=\bar1_0$ and $\gs{\bar1_0}{\bar1_1}^0= 
\delta$, we see that
$$M = \oplus_{i=1}^n (W u_i+Wv_i) \ \simeq\ \underbrace{ \MMbar^0 
\oplus\dots\oplus \MMbar^0}_{n-1}\oplus\, \MM^0$$
as polarized Dieudonn\'e modules. The inclusion maps are then given by
$$
\inc_i: W \bar1_0+W\bar1_1 \lra M, \qquad \bar1_0 \mapsto u_i, \quad  
\bar1_1\mapsto v_i,
$$
for $ i\le n-1$, resp.
$$
\inc_i: W 1_0+W1_1 \lra M\qquad 1_0\mapsto v_n, \quad 1_1 \mapsto u_n,
$$
for $i=n$.

Finally, recalling that we have already identified the isocrystal of $ 
\bold X$ with that of $\X$
via $\rho_{\bold X}$, we see that the morphism $y_i: \YYbar  
\rightarrow \bold X$  does indeed yield the given elements
$y_i = y_{i0}(\bar1_0)$ of $N_0$. On the other hand, $y_{i1}F=Fy_{i0} 
$, so that $p y_{i1}(\bar1_1) = y_{i1}(F\bar1_0) = Fy_{i0}(\bar1_0)
=F y_i$. Hence $y_i: \MMbar^0\rightarrow M$ is given by
$$\bar1_0\mapsto y_i=\begin{cases} u_i&\text{if $i\le n-2$,}\\p^r u_i& 
\text{if $i=n-1$,}\\
p^{s+1} u_i&\text{if $i=n$,}
\end{cases}\qquad \bar1_1\mapsto \begin{cases} v_i&\text{if $i\le n-2 
$,}\\p^r v_i&\text{if $i=n-1$,}\\
p^{s} v_i&\text{if $i=n$.}\end{cases}$$
Thus, for $i=n$, we have
$$y_n = p^s\,\inc_n\circ \Pi$$
where   $\Pi: \bar1_0 \mapsto p 1_1$, $\bar1_1\mapsto 1_0$ and  
is $W$-linear.
The cases $i\le n-1$ are clear.
\end{proof}

Let $\text{\rm Def}\,(\bold X, \iota,\l_{\bold X}; \xx)$ be the universal formal deformation ring of 
the collection $(\bold X, \iota,\l_{\bold X}; \xx)$, where $\xx$ is our given $n$-tuple of special homomorphisms $x_i: 
\YYbar\rightarrow \bold X$. 
Since, as in the previous lemma,  it is equivalent to deform the linear  
combination $\yy=\xx g$ for
$g\in \GL_n(\OK)$, we have
\begin{equation}\label{defring}
\text{\rm Def}\,(\bold X, \iota,\l_{\bold X}; \xx)=
\text{\rm Def}\,(\bold X, \iota,\l_{\bold X};\yy).
\end{equation}
We now want to calculate the length of this deformation ring.

If $(X,\iota,\l_X;\yy)$ is any deformation, the element $$e_0 =\sum_ 
{i=1}^{n-2} y_i\circ y_i^*$$
is an idempotent in $\End_{\OK}(X)$ so that $X=e_0X\times (1-e_0)X$,  
where
$e_0X$ (resp. $(1-e_0)X$) is a deformation of
$e_0\bold X$ (resp. of  $\bold X':=(1-e_0)\bold X$). Furthermore,   
the polarization decomposes into the product of polarizations of $e_0X 
$ and $(1-e_0)X$. But the collection $[y_1,\dots,y_{n-2}]$
defines an isomorphism $\bar Y^{n-2} \isoarrow e_0X$, compatibly with  
polarizations.
Thus, the deformations of $(\bold X, \iota,\l_{\bold X};\yy)$
are in bijection with those of $(\bold X', \iota,\l_{\bold X'};\yy') 
$, where
$\bold X'\simeq \YYbar\times\YY$, and where $\yy'=[y_{n-1},y_n]$.
Hence we have
$$\text{\rm Def}\,(\bold X, \iota,\l_{\bold X};\yy) =
\text{\rm Def}\,(\bold X', \iota,\l_{\bold X'};\yy').$$

Thus, it suffices to compute the length of the deformation ring (\ref 
{defring})
in the case $n=2$, where $\yy=[y_1,y_2]$.
By the previous lemma, we have
\begin{equation}\label{boldX}
\bold X = \YYbar \times \YY, \ \ \ y_1 = \inc_1\circ \Pi^a, \ \ \  y_2=\inc_2\circ\Pi^b,
\end{equation}
 where $a$ is  
even and $b$ is odd (but $a$ and $b$ are not ordered by size).

Let $\Cal M$ denote the universal deformation space of $(\bold X,  
\iota, \lambda_{\bold X})$. Then $\Cal M\simeq \Spf\  W[[t]]$ and the
locus $\ZZ(y_1)$ (resp. $\ZZ(y_2)$) where $y_1$ (resp. $y_2$)
deforms is a (formal) divisor on this $2$-dimensional regular  
(formal) scheme. The problem is now to determine the length
\begin{equation}\label{deforminter}
\text{\rm length}_W \text{\rm Def}(\bold X, \iota,\l_{\bold X};\yy) =  
\ZZ(y_1)\cdot \ZZ(y_2)
\end{equation}
of the intersection of these two formal subschemes of $\Cal M$.

This problem is solved in section~\ref{sectionmultcomp} as an  
application of a variant of Gross's theory of quasicanonical
liftings described in the next two sections.

\section{Quasi-canonical liftings}

In this section, we first explain a general construction  of  
extending the
endomorphism ring of a $p$-divisible group. We then apply this
construction to quasi-canonical lifts, and show how this can be used to
construct liftings of  $(\bold X, \iota, \lambda_{\bold X})$ to  
finite extensions of $W$.

For a $p$-divisible group $X$, we define the $p$-divisible group $\OK 
\tt X$ in the standard way as an exterior tensor product,
e.g.~\cite{DM}, p.~\!131.
Explicitly, after choosing a $\Z_p$-basis ${\bf e}= (e_1, e_2)$ of $\OK$, we  
define  $\OK\tt X$ to be $X\times X$.
Any other choice $\bf e'$ of a $\Z_p$-basis differs from the first  
choice by a matrix
$g\in \GL_2(\Z_p)$. Then $g$ defines an automorphism $\alpha_{\bf e,  
\bf e'}: X\times X\to X\times X$.
Since $\alpha_{\bf e, \bf e''}=\alpha_{\bf e, \bf e'}\circ\alpha_{\bf  
e', \bf e''}$, we obtain a system of compatible isomorphisms. Hence  
we obtain in an unambiguous way a $p$-divisible group $\OK\tt X$, unique
up to unique isomorphism, with isomorphisms $\beta_{\bf e}: \OK\tt X 
\to X\times X$, for any choice of a basis $\bf e$, such that $\beta_ 
{\bf e'}=\alpha_{\bf e, \bf e'}\circ \beta_{\bf e}$. The action by  
left translation of $\OK$ on itself
defines an action $\OK\to \End(\OK\tt X)$.
We obtain in this way a functor from the category of $p$-divisible  
groups to the
category of $p$-divisible groups with $\OK$-action. It is compatible  
with base change,
$$
(\OK\tt X)\times_SS'=\OK\tt (X\times_SS').
$$
We mention the following properties of this functor.

\begin{lem}\label{OKendos}
  (i) The functor $X\mapsto \OK\tt X$ from the category of $p$-divisible
  groups to the category of $p$-divisible groups with $\OK$-action is  
left adjoint to the functor forgetting the $\OK$-action.\hfb
  (ii) For the Lie algebras
$$
\Lie (\OK\tt X)=\OK\tt_{\Z_p}\Lie\ X.
$$
Similarly, for the $p$-adic Tate modules,
$$
T_p(\OK\tt X)=\OK\tt_{\Z_p}T_p(X).
$$
There is an analogous statement for the Dieudonn\'e module, if $X$ is  
a $p$-divisible group over
a perfect field.\hfb
\noindent (iii) For two $p$-divisible groups $X$ and $Y$,
$$
\Hom(\OK\tt X, \OK\tt Y)\simeq M_2(\Hom (X, Y)),
$$
and
$$
\Hom _{\OK}(\OK\tt X, \OK\tt Y)=\OK\tt_{\Z_p}\Hom (X, Y).
$$
\end{lem}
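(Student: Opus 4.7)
The plan is to unwind the definition $\OK\tt X = X\times X$ (via a $\Z_p$-basis) and exploit the fact that $\OK$ is free of rank $2$ as a $\Z_p$-module, so that all the functors in sight ($\Lie$, $T_p$, the Dieudonn\'e functor, $\Hom$) are additive and commute with finite products. All three parts then reduce to essentially formal manipulations.

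For (i), I would construct the unit $\eta_X : X \to \OK\tt X$ of the adjunction as the morphism $x\mapsto 1\tt x$. Concretely, after choosing a $\Z_p$-basis $\mathbf e=(1,e_2)$ of $\OK$ so that $\OK\tt X$ is identified with $X\times X$, the unit is $x\mapsto (x,0)$. Given a $p$-divisible group $Y$ with $\OK$-action $\iota_Y$ and any morphism $f:X\to Y$, the unique $\OK$-linear extension $\tilde f:\OK\tt X\to Y$ is forced to be
$$(x_1,x_2)\,\longmapsto\, f(x_1)+\iota_Y(e_2)\,f(x_2),$$
and independence of the basis choice follows from the cocycle identity $\beta_{\mathbf e'}=\alpha_{\mathbf e,\mathbf e'}\circ\beta_{\mathbf e}$ built into the construction of $\OK\tt X$. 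The inverse bijection sends $g:\OK\tt X\to Y$ to $g\circ\eta_X$, and one checks $\tilde f\circ\eta_X=f$ and $\widetilde{g\circ\eta_X}=g$, giving the adjunction.

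For (ii), after choosing a basis we have $\Lie(\OK\tt X)=\Lie X\oplus\Lie X$, $T_p(\OK\tt X)=T_p X\oplus T_p X$, and, over a perfect field, the analogous direct sum decomposition of the Dieudonn\'e module. Transporting the $\OK$-action across each such identification (which by construction corresponds under $\beta_{\mathbf e}$ to the left regular action of $\OK$ on itself) rewrites each right-hand side as $\OK\tt_{\Z_p}(-)$, since tensoring a free $\Z_p$-module of rank $2$ with any module just produces two copies of it with the left regular $\OK$-action.

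For (iii), the first formula is the immediate computation
$$\Hom(\OK\tt X,\OK\tt Y)=\Hom(X\times X, Y\times Y)=M_2(\Hom(X,Y)).$$
For the second, I would combine (i) with additivity:
$$\Hom_{\OK}(\OK\tt X,\OK\tt Y)=\Hom(X,\OK\tt Y)=\Hom(X,Y\times Y)=\Hom(X,Y)^{\oplus 2},$$
and the $\OK$-module structure carried along from the target on the right matches the left regular $\OK$-action on $\OK\tt_{\Z_p}\Hom(X,Y)$ under the basis $\mathbf e$. No real obstacle arises; the only point demanding mild care is the well-definedness in (i), i.e.~verifying that the extension $\tilde f$ is intrinsic and not merely defined with respect to a chosen basis, which is precisely what the compatibility system $\alpha_{\mathbf e,\mathbf e'}$ was set up to ensure.
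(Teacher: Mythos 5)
Your proof is correct. Parts (i), (ii), and the first isomorphism of (iii) are handled exactly as the paper does (the paper simply says (i) and (ii) ``follow immediately from the definitions,'' and your explicit construction of the unit $\eta_X$ is what that phrase unpacks to). The one place where you genuinely diverge is the second isomorphism in (iii). The paper's proof identifies $\Hom_{\OK}(\OK\tt X,\OK\tt Y)$, after choosing a basis, with the subset of $M_2(\Hom(X,Y))$ consisting of matrices that commute with the matrix of multiplication by $\delta$, and then implicitly invokes the fact that the commutant of $\OK$ in $M_2(\Z_p)$ is $\OK$ itself (so that, tensoring with the torsion-free $\Z_p$-module $\Hom(X,Y)$, the commutant is $\OK\tt_{\Z_p}\Hom(X,Y)$). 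You instead deduce the identification directly from the adjunction in (i): $\Hom_{\OK}(\OK\tt X,\OK\tt Y)\cong\Hom(X,\OK\tt Y)\cong\Hom(X,Y)^{\oplus 2}$, with the $\OK$-structure being the left regular action transported from the target. Your route is a bit more economical, since it avoids the centralizer computation and the small flatness point needed to carry it across a tensor with $\Hom(X,Y)$, and it makes better structural use of part (i); the paper's argument, on the other hand, is more concrete and self-contained within (iii). Both are valid.
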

\begin{proof}
The first two assertions  follow immediately from the definitions.   
For the first
isomorphism in (iii), choose a $\Z_p$-basis of $\OK$ which identifies  
$\OK\tt X$ and $\OK\tt Y$
with $X^2$ and $Y^2$.  For the second isomorphism, note that the left  
hand side is then identified with the
matrices in ${\rm M}_2(\Hom(X, Y))=\Hom (X^2, Y^2)$ that commute with  
the matrix for multiplication by $\delta$.
\end{proof}

Another useful fact is the following.

\begin{lem}\label{directsum}
Let $p\neq 2$. If $(X,\iota)$ is a p-divisible group with $\OK$-action, 
then there is an isomorphism
$$X\times \bar X\isoarrow \OK\tt X$$
given by
$$(z_1,z_2) \mapsto \a_+(1\tt z_1) + \a_-(1\tt z_2),$$
where
$$\a_\pm = \delta\tt1\pm 1\tt\iota(\delta)\in \End(\OK\tt X).$$
Here $\bar X$ denotes the group $X$ with $\OK$-action given by $\iota 
\circ \s$.
This isomorphism is $\OK$-linear, where $\a\in\OK$ acts on $\OK\tt X$  
via $\a\tt1$.
\end{lem}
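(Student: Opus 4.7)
The plan is to interpret $\alpha_+$ and $\alpha_-$ as (up to rescaling by the unit $2\delta$) orthogonal idempotents in $\End(\OK \tt X)$ that split this group into copies of $X$ and $\bar X$; the map of the lemma will then be the induced direct-sum isomorphism. The first observation is that $\OK \tt X$ carries two commuting $\OK$-actions, namely by left translation on the first factor and via $\iota$ on the second, which agree on $\Z_p \subset \OK$. Since $\delta^\s = -\delta$ forces $\delta^2 \in \Z_p$, one has $(\delta \tt 1)^2 = (1 \tt \iota(\delta))^2$, yielding
$$\alpha_+ \alpha_- = (\delta \tt 1)^2 - (1 \tt \iota(\delta))^2 = 0 \qquad \text{and} \qquad \alpha_+ + \alpha_- = 2(\delta \tt 1).$$
Because $p \ne 2$, $2\delta \in \OK^\times$, so $e_\pm := ((2\delta)^{-1} \tt 1) \cdot \alpha_\pm$ are orthogonal idempotents with $e_+ + e_- = 1$. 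They give a decomposition $\OK \tt X = e_+(\OK \tt X) \oplus e_-(\OK \tt X)$ on which, respectively, $\alpha_-$ and $\alpha_+$ vanish: that is, $\delta \tt 1 = 1 \tt \iota(\delta)$ on the first summand and $\delta \tt 1 = 1 \tt \iota(\s(\delta))$ on the second.

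To identify this decomposition with the formula stated in the lemma, I would choose the $\Z_p$-basis $(1, \delta)$ to identify $\OK \tt X \cong X \times X$ via $1 \tt z_1 + \delta \tt z_2 \leftrightarrow (z_1, z_2)$. A direct expansion gives
$$\alpha_+(1 \tt z_1) + \alpha_-(1 \tt z_2) = \bigl(\iota(\delta)(z_1 - z_2),\ z_1 + z_2\bigr),$$
and this map $X \times \bar X \to X \times X$ has the explicit inverse
$$(w_1, w_2) \longmapsto \tfrac12\bigl(w_2 + \iota(\delta^{-1}) w_1,\ w_2 - \iota(\delta^{-1}) w_1\bigr),$$
which is a well-defined homomorphism of $p$-divisible groups precisely because $p \ne 2$ and $\delta \in \OK^\times$. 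Hence the map is an isomorphism.

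For the $\OK$-linearity, I would note that $1 \tt \iota(\beta)$ commutes with each $\alpha_\pm$ for any $\beta \in \OK$; combining this with $\delta \tt 1 = 1 \tt \iota(\delta)$ on the $e_+$-summand and $\delta \tt 1 = 1 \tt \iota(\s(\delta))$ on the $e_-$-summand gives
$$(\beta \tt 1) \cdot \alpha_+(1 \tt z_1) = \alpha_+(1 \tt \iota(\beta) z_1), \qquad (\beta \tt 1) \cdot \alpha_-(1 \tt z_2) = \alpha_-(1 \tt \iota(\s(\beta)) z_2),$$
which is the required $\OK$-equivariance since the $\OK$-action on $\bar X$ is $\iota \circ \s$. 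No step is technically hard; the only issue is keeping the two commuting $\OK$-actions on $\OK \tt X$ straight, and the hypothesis $p \ne 2$ enters exactly through the invertibility of $2\delta$, reflecting the étale splitting $\OK \tt_{\Z_p} \OK \cong \OK \times \OK$.
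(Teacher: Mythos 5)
Your proof is correct and follows essentially the same route as the paper: establish that $\a_+\a_-=0$, $\a_++\a_-=2\delta\tt1$, and $(\a_\pm)^2=(2\delta\tt1)\a_\pm$, so that (after rescaling by the unit $2\delta$) the $\a_\pm$ give an orthogonal idempotent decomposition of $\OK\tt X$, with $\OK$-linearity read off from the commutation relation $(\delta\tt1)\circ\a_\pm=\pm\a_\pm\circ(1\tt\iota(\delta))$. You supply somewhat more detail than the paper's terse one-paragraph proof, in particular the explicit matrix form of the map in the basis $(1,\delta)$ and its inverse, but the underlying argument is the same.
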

\begin{proof} We simply observe the following facts. First,
\begin{equation}\label{muswitch}
(\delta\tt 1) \circ \a_{\pm} = \pm \a_{\pm}\circ (1\tt\iota(\delta))  
= \a_{\pm}\circ (\delta\tt 1).
\end{equation}
Also $\a_++\a_- = 2\delta\tt1$ is an automorphism of $\OK\tt X$ with
$\a_+\circ \a_-=0$, and $(\a_{\pm})^2 = (2\delta\tt1)\,\a_\pm$.
\end{proof}
Finally, we will need the following construction of polarizations on $ 
\OK\tt X$.
We consider the standard hermitian form on $\OK$, with $\hgs{1}{1}=1 
$, and the associated
perfect alternating $\Z_p$-valued form
$$
\gs{\alpha}{\beta}=\tr_{\OK/\Z_p} (\alpha\delta^{-1}\beta^\s).
$$
Using this form on the first factor, we have a canonical perfect pairing
\begin{equation}
\langle\ ,\ \rangle:\ (\OK\tt X)\times (\OK\tt X^\vee)\ \to \hat  
{\mathbb G}_m,
\end{equation}
satisfying the identity on points
$$
\langle \alpha\, x,  y\rangle=\langle x,  \alpha^\sigma\, y\rangle .
$$
Using this pairing, we may identify the Cartier dual $(\OK\tt X)^\vee 
$ of $\OK\tt X$ with $\OK\tt X^\vee$.
Hence, starting with a polarization $\lambda: X\to X^\vee$, we obtain  
a $\OK$-linear polarization
$$
{\rm id}_{\OK}\tt\lambda: \OK\tt X\to \OK\tt X^\vee=(\OK\tt X)^\vee,
$$
such that the associated Rosati involution induces the Galois  
automorphism
$\sigma$ on $\OK$.

We now apply this construction to Gross's quasi-canonical lifts.
Let us recall briefly a few facts from Gross's theory, \cite 
{grossqc}, \cite{ARGOS}, that we will use in what follows.

Let $G$ be a  $p$-divisible formal group of height $2$ and dimension  
$1$ over $\FF$. Then $G$ is uniquely
determined up to isomorphism, and $\End (G)=O_D$,
the maximal order in the quaternion division algebra $D$ over $\Q_p$.
After fixing an embedding $i:\OK\hookrightarrow O_D$, $G$ becomes a
formal $\OK$-module of height $2$. There is a unique lifting $F_0$ of  
this
formal $\OK$-module to $W$, the {\it canonical lift}.

\begin{rem}\label{YYvsbarYY}
Note that after fixing the embedding $i$ of $\OK$ into $O_D$ we may
identify $G$  with the
group $\YY$ of the previous sections. Let $\Pi$ be a uniformizer of $D 
$ that normalizes $\OK$ and with $\Pi^2=p$.
Then the embedding $i\circ\s$ of $\OK$ into $O_D$ is the conjugate of  
$i$ by $\Pi$, and the group
$G$ with the embedding $i\circ \s$ is the group $\bar\YY$.
The formal $\OK$-modules  $\YY$ and $\bar\YY$ are not isomorphic\footnote{The  
analogous groups for a ramified
extension $\smallkay$ are isomorphic.} and $\Pi$ gives an isogeny between  
them of
degree $p$.
\end{rem}

For an integer $s\ge1$,  let
$$
\OKs=\Z_p+p^s\OK.
$$
be the order of conductor $s$ in
$\OK$.

A {\it quasi-canonical lift $F_s$ of level $s$} of $G$ is a lifting  
of $G$ to some finite
extension $A$ of $W$ with  endomorphism ring equal
to $\OKs$, such that the induced action of $\OKs$ on $\Lie\ F_s$ is  
through the
embedding $\OKs\subset W\subset A$ and such that the image of $\OKs$  
in $\End(G)$ is
contained in the image of the fixed embedding of $\OK$ in $O_D$, cf.~ 
\cite{wewers1}, 
Definition~3.1\footnote{More precisely, the induced embedding of $\OKs 
$ coincides with the restriction to $\OKs\subset \OK$ of
 $i$, when $s$ is even,  or $i\circ \s$, when $s$ is odd.
This definition differs in fact from that given  in loc.cit, where it  
is required
that the induced embedding of $\OKs$ in $\End(G)$ is equal to the restriction of 
the fixed embedding $i$ of $\OK$ into $O_D$. However, this condition
is too strong since it would not allow
quasi-canonical lifts for odd $s$, cf.  for example Lemma~\ref 
{frobtwist}. }.
  Any quasi-canonical lift of level $s$ is defined over $W_s$, the ring
  of integers in the ring class field $M_s$ of $\OKs^{\times}$,
i.e., the finite abelian extension of  $M=W_{\Q}$ corresponding
to the subgroup $\OKs^{\times}$ of $\OK^{\times}$ under the  
reciprocity isomorphism.
Note that $M_s$ is a totally ramified extension of $M$ of degree
$$
e_s=p^{s-1}(p+1).
$$
Quasi-canonical lifts of level $s$ always exist; they are not unique,  
but any two are conjugate
under the Galois group $\Gal(M_s/M)$, and, in fact, this Galois group  
acts in a simply transitive way
on all quasi-canonical lifts of level $s$. Quasi-canonical lifts are  
isogenous to the canonical lift.
More precisely, there exists an isogeny $\psi_s: F_0\to F_s$ of  
degree $p^s$ defined over $W_s$.
In terms of the Tate modules of the generic fibers, the isogeny  $ 
\psi_s$ can be described as follows.
There exists a generator $t$ of the free $\OK$-module  $T_p(F_0)$,  
such that
$$
T_p(F_s)=(\Z_p\cdot p^{-s}+\OK)\cdot t.
$$
We note that when $s$ is even, the isogeny $\psi_s$  is  compatible  
with the embedding
of $\OK$  in $O_D=\End (G)$, whereas when $s$ is odd, it is  
compatible with $\iota\circ \s$.  More precisely,
\begin{lem}\label{frobtwist}
Let $\Pi$ be a uniformizer  of $O_D$ which normalizes $\OK$, as above.
Given a quasi-canonical lift $F_s$, there exists a unique 
$\OK$-linear isogeny
  $\psi_s:F_0\to F_s$ which induces the endomorphism $\Pi^s$ on the  
special fiber $G$.
  Moreover, the set $H_{0,s}\subset O_D$ of homomorphisms from $F_0\tt 
\F=G$
  to $F_s\tt \F=G$ that lift to homomorphisms from $F_0$ to $F_s$ is  
precisely
  $\Pi^s\OK$. \qed
\end{lem}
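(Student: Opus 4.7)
My approach splits into existence of $\psi_s$, uniqueness, and the description of $H_{0,s}$, and rests on two inputs: the Tate-module description $T_p(F_0)=\OK\cdot t$ and $T_p(F_s)=(\Z_p\cdot p^{-s}+\OK)\cdot t$ recalled just above the lemma, and the standard rigidity fact that the reduction map $\Hom(F_0,F_s)\to\End(G)=O_D$ is injective for our connected formal $p$-divisible groups over a totally ramified extension of $W$.

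For existence I would first construct the isogeny via Tate modules: the $\OKs$-equivariant inclusion $\OK\,t\hookrightarrow(\Z_p p^{-s}+\OK)\,t$ corresponds by Tate's full-faithfulness to a unique isogeny $\psi_s^\sharp\colon F_0\to F_s$ over $W_s$ of degree $p^s$. Its reduction in $O_D$ has valuation $s$, hence equals $\Pi^s u$ for some $u\in O_D^\times$. Combining the $\OKs$-linearity of $\psi_s^\sharp$ with the normalizer identity $\Pi^s\,i(\beta)=i(\beta^{\sigma^s})\,\Pi^s$ shows that $u$ centralizes $i(\OKs)$; since $\OKs\otimes_{\Z_p}\Q_p=\kay$, the centralizer of $i(\OKs)$ in $D$ is $i(\kay)$, forcing $u\in i(\OK)^\times$. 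Precomposing $\psi_s^\sharp$ with the corresponding element of $\End(F_0)=\OK$ yields the required $\psi_s$ reducing to exactly $\Pi^s$, and $\OK$-linearity follows by transporting the $\OK$-action on $F_0$ through $\psi_s$, which on the special fiber reduces via $i$ or via $i\circ\sigma$ according to the parity of $s$. Uniqueness is then immediate: two candidates differ by an element of $\Hom(F_0,F_s)$ reducing to zero, hence are equal by injectivity of reduction.

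For the description of $H_{0,s}$, the inclusion $\Pi^s\OK\subseteq H_{0,s}$ is clear since $\psi_s\circ\iota_{F_0}(\alpha)$ lifts $\Pi^s\,i(\alpha)$ for each $\alpha\in\OK$. For the reverse inclusion I would use the isomorphism $\Hom(F_0,F_s)\otimes\Q_p\simeq\kay$ induced by $\psi_s$ to write any $\tilde\phi$ as $\psi_s\circ\alpha$ for some $\alpha\in\kay$; integrality of $\tilde\phi$ translates on Tate modules into the condition $\alpha\,\OK\subseteq\Z_p p^{-s}+\OK$, and a short elementary computation, exploiting that for $r\geq 1$ the quotient $\OK/p^r\OK$ has $\F_p$-dimension $2r$ while $(\Z_p+p^r\OK)/p^r\OK=\Z_p/p^r\Z_p$ has $\F_p$-dimension $r$, forces $\alpha\in\OK$; whence $\tilde\phi$ reduces to $\Pi^s\,i(\alpha)\in\Pi^s\OK$.

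The main obstacle I expect is the parity-sensitive bookkeeping in the existence step: verifying that the unit $u\in O_D^\times$ produced by the Tate construction can genuinely be absorbed on the right by an element of the larger ring $\End(F_0)=\OK$ (and not merely by the smaller $\End(F_s)=\OKs$), and identifying the transported $\OK$-action on $F_s$ with $i$ or $i\circ\sigma$ according to the parity of $s$. The structural input that makes this work is precisely the normalizer relation $\Pi\,i(\alpha)\,\Pi^{-1}=i(\alpha^\sigma)$ governing the interaction of $\Pi$ with the commutative subring $i(\OK)\subset O_D$.
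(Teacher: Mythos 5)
The paper leaves this lemma without proof (the statement ends with \(\qed\), i.e.\ it is treated as a direct compilation of the facts about quasi-canonical lifts just recalled from \cite{grossqc}, \cite{ARGOS}, \cite{wewers1}, together with the description $H_{r,s}=\Pi^{s-r}\OKr$ used again in Section~7 and drawn from \cite{wewers2}). Your reconstruction via Tate modules and Tate's full-faithfulness theorem over the DVR $W_s$ is the natural way to supply the omitted argument, and it is in the same spirit as the surrounding text, which explicitly records $T_p(F_0)=\OK\cdot t$ and $T_p(F_s)=(\Z_p p^{-s}+\OK)\cdot t$ for precisely this purpose. The structure of your argument is sound: Tate gives the degree-$p^s$ isogeny, the normalizer relation $\Pi\, i(\alpha)\,\Pi^{-1}=i(\alpha^{\sigma})$ (combined with the parity-sensitive convention on $\iota_{F_s,\mathrm{red}}$) shows the reduction is $\Pi^s$ up to a unit in $i(\OK)$, precomposition with an element of $\End(F_0)=\OK$ removes the unit, and injectivity of the reduction map gives uniqueness.

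One small point of precision in the $H_{0,s}$ step. You claim that $\alpha\OK\subseteq\Z_p p^{-s}+\OK$ forces $\alpha\in\OK$, citing the length comparison $\mathrm{lg}(\OK/p^r\OK)=2r>\mathrm{lg}((\Z_p+p^r\OK)/p^r\OK)=r$. That inequality alone does not immediately yield the contradiction: after scaling by $p^r$ (with $r=-\ord_p(\alpha)\geq 1$) one obtains $\OK\subseteq\Z_p p^{r-s}+p^r\OK$, and the right-hand side has length $s+2r$ over $p^r\OK$, which need not be smaller than $2r$. What makes the argument work is the extra observation that $\OK\cap\bigl(\Z_p p^{r-s}+p^r\OK\bigr)=\Z_p+p^r\OK$ — or, said more concretely, writing $\OK=\Z_p\oplus\Z_p\omega$, the $\omega$-component of $\Z_p p^{-s}+\OK$ is only $\Z_p\omega$, so $p^{-r}\omega$ cannot lie in it for $r\geq 1$. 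With that one-line intersection step inserted, your proof is complete and matches the paper's implicit route.
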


We will refer to the canonical lift $F_0$ as a 
quasi-canonical lift of level $0$.

Let $X^{(s)}=\OK\tt F_s$, for some quasi-canonical lift $F_s$ of  
level $s$. Then $X^{(s)}$ is a
$p$-divisible group with $\OK$-action $\iota$ over $W_s$, with $\Lie\  
X^{(s)}=\OK\tt_{\Z_p}\Lie\ F_s$,
hence $X^{(s)}$ satisfies the signature condition $(1,1)$. Let $\lambda$
be a $p$-principal polarization of $F_s$. Note that $\lambda$ is  
unique up to a scalar in $\Z_p^\times$,
since it induces  a perfect alternating form on the $2$-dimensional $ 
\Z_p$-module $T_p(F_s)$.
By the construction outlined above, we obtain  a $p$-principal $\OK$- 
linear
polarization $\lambda^{(s)}={\rm id}_{\OK}\tt\lambda$ on $X^{(s)}$  
for which the
Rosati involution induces the Galois conjugation  on $\OK$. The special
fiber  of $X^{(s)}$ is equal to $\OK\tt (F_s\tt_{W_s}\FF)$. Now $F_s 
\tt_{W_s}\FF$ is
equal to $G$; however, as a formal $\OK$-module it is equal to $\YY$  
when $s$ is
even, and equal to $\YYbar$ when $s$ is odd. Applying  Lemma \ref 
{directsum}, we obtain identifications
\begin{equation}\label{twistid}
X^{(s)}\tt_{W_s}\FF\ =\
\begin{cases}\  \YY\times\YYbar&\text{$s$ even }\\
\  \YYbar\times\YY&\text{$s$ odd .}
\end{cases}
\end{equation}
Recall from (\ref{boldX}) that $\bold X=\YYbar\times\YY$. We now  
identify $X^{(s)}\tt_{W_s}\FF$ with
$\bold X$ by using the switch isomorphism $\YY\times\YYbar\simeq  
\YYbar\times\YY$
when $s$ is even, resp. the identity map  on $\YYbar\times\YY$ when $s 
$ is odd.
In all cases we have therefore obtained  canonical $\OK$-linear  
isomorphisms
\begin{equation}\label{unnatural}
\rho^{(s)}: X^{(s)}\tt_{W_s}\FF\simeq \bold X.
\end{equation}
By pulling back the polarization $\l_{\bold X}$ to $X^{(s)}\tt_{W_s} 
\FF$, we obtain a
$\OK$-linear $p$-principal polarization on $X^{(s)}\tt_{W_s}\FF$  
which differs from
$\l^{(s)}$ by a scalar in $\Z_p^\times$ (same argument as above,  
using the
Dieudonn\'e module instead of the Tate module). Hence we may change $ 
\l^{(s)}$
by this scalar such that these polarizations coincide. Hence $(X^ 
{(s)}, \iota, \l^{(s)})$ is
a deformation of $(\bold X, \iota, \l_{\bold X})$ to $W_s$. We  
therefore obtain a morphism
\begin{equation}
\varphi_s: \Spf \ W_s\to \Cal M.
\end{equation}
\begin{lem}
The morphism $\varphi_s$ is a closed immersion.
\end{lem}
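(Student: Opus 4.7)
The morphism $\varphi_s$ corresponds, under the identification $\Cal M = \Spf\ W[[t]]$, to a continuous $W$-algebra homomorphism $\varphi_s^*\colon W[[t]] \to W_s$, and being a closed immersion of formal schemes is equivalent to surjectivity of $\varphi_s^*$. Both rings are complete Noetherian local with residue field $\FF$, so by Nakayama's lemma surjectivity reduces to surjectivity on cotangent spaces. The cotangent space of $W[[t]]$ is two-dimensional over $\FF$ with basis the classes of $p$ and $t$, while the cotangent space of $W_s$ is one-dimensional, spanned by the class of a uniformizer $\pi_s$. For $s \ge 1$ one has $e_s = p^{s-1}(p+1) \ge 2$, hence the class of $p$ vanishes in $\mathfrak m_{W_s}/\mathfrak m_{W_s}^2$. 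The problem thus reduces to showing that $\varphi_s^*(t)$ is a uniformizer of $W_s$; the case $s=0$, where $W_0=W$, is immediate.

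The strategy is to identify $\Cal M$ with the classical Lubin--Tate deformation space of the formal group $G = \YY$. Since $\bold X = \YYbar\times\YY \cong \OK \tt \YY$ by Lemma \ref{directsum}, the exterior tensor construction $F \mapsto \OK \tt F$ equipped with the $\OK$-action via the left tensor factor and with the polarization ${\rm id}_{\OK} \tt \lambda$ built from a principal polarization $\lambda$ of $F$ sends a deformation $F$ of $G$ to a deformation of $\bold X$ with $\OK$-action of signature $(1,1)$ and $\OK$-linear $p$-principal polarization whose Rosati involution is $\sigma$. This yields a morphism of formal schemes $\Spf\ W[[u]] \to \Cal M$ over $\Spf\ W$, where $\Spf\ W[[u]]$ is the Lubin--Tate deformation space of $G$. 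Both sides are formally smooth of relative dimension $1$ over $W$, and a direct Grothendieck--Messing tangent-space computation, using that $\Lie(\OK \tt F) = \OK \tt_{\Z_p} \Lie\ F$ splits as a sum of one-dimensional $\varphi_0$- and $\varphi_1$-eigenspaces, shows that the induced map on tangent spaces is an isomorphism. Hence the morphism itself is an isomorphism.

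By Gross's theorem on quasi-canonical lifts (see \cite{grossqc}, \cite{ARGOS}), the morphism $\Spf\ W_s \to \Spf\ W[[u]]$ classifying the quasi-canonical lift $F_s$ of level $s$ is a closed immersion, and the Lubin--Tate parameter $u$ pulls back to a uniformizer of $W_s$. Because $X^{(s)} = \OK \tt F_s$ by construction, with polarization $\lambda^{(s)}$ obtained as ${\rm id}_{\OK} \tt \lambda$ (up to the $\Z_p^\times$-scalar adjustment performed just above the lemma), the morphism $\varphi_s$ factors as this closed immersion followed by the isomorphism $\Spf\ W[[u]] \isoarrow \Cal M$ of the previous paragraph. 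Consequently, $\varphi_s$ itself is a closed immersion, with $\varphi_s^*(t)$ a uniformizer of $W_s$.

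The main obstacle is establishing the isomorphism between the two deformation spaces rigorously: while the $\OK$-action, signature, and tangent-space dimensions are straightforward, one must check that ${\rm id}_{\OK} \tt \lambda$ reproduces $\lambda_{\bold X}$ on the special fiber up to a $\Z_p^\times$-scalar. This is a Dieudonn\'e-module verification using the explicit pairings $\gs{}{}^0$ from Remark \ref{rem10case} together with the standard pairing $\gs{\alpha}{\beta} = \tr_{\OK/\Z_p}(\alpha \delta^{-1} \beta^\sigma)$ on $\OK$ used in the definition of the polarization on $\OK \tt X$.
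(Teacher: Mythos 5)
Your argument is correct in outline but takes a genuinely different route from the paper's. You first reduce, via Nakayama, to showing that $\varphi_s^*(t)$ is a uniformizer of $W_s$ (this reduction is sound and is essentially equivalent to the paper's reduction, via EGA IV 17.4.4, to showing $W_s\otimes_R\FF=\FF$). You then propose to prove the much stronger statement that $F\mapsto \OK\tt F$ identifies the Lubin--Tate deformation space $\Spf\,W[[u]]$ of $G$ with $\Cal M$, and to quote the known fact that quasi-canonical lifts define closed immersions $\Spf\,W_s\hookrightarrow \Spf\,W[[u]]$ along which $u$ restricts to a uniformizer. The paper instead stays local: it only shows that the fiber of $\varphi_s$ over the closed point of $\Cal M$ is reduced, by interpreting that fiber as the locus where there is an $\OK$-linear isomorphism $\OK\tt F_s\to\OK\tt G$, decomposing such an isomorphism as $1\tt\a_0+\delta\tt\a_1$ with $\a_i\in\Hom(F_s,G)$ via Lemma \ref{OKendos}(iii), observing that one of the $\a_i$ must be a unit in $O_D$ and hence an isomorphism $F_s\to F_0$, and then invoking \cite{wewers1}, Cor.~4.7. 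Note that both arguments ultimately rest on the same input from the Gross--Wewers theory of quasi-canonical lifts, and that the injectivity of your tangent map is proved by exactly the same $\Hom(\OK\tt X,\OK\tt Y)$ decomposition the paper uses; in this sense the paper's proof is the ``fiberwise over the closed point'' shadow of your global isomorphism. The trade-off: your route requires carrying out in full the Grothendieck--Messing computation and the polarization comparison that you only sketch (these are where the real content of your proof lives --- in particular, showing that the degree-zero piece of the Hodge filtration of $\OK\tt F$ is canonically the Hodge filtration of $F$ and that $\id_{\OK}\tt\lambda$ matches $\lambda_{\bold X}$ up to a $\Z_p^\times$-scalar, as the paper does just before the lemma for the special fiber), but in exchange it yields a complete description of $\Cal M$ as the Lubin--Tate space, which would also simplify the divisor decompositions and intersection computations of Section 8. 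The paper's proof is shorter because it proves only what is needed.
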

\begin{proof}
Denoting by $R$ the affine ring of $\Cal M$, we have to show  that the
morphism $\varphi_s^*: R\to W_s$ is surjective. Now $R\simeq W[[t]]$, and $W_s$ is a finite totally ramified extension 
of $W$. To show that $\varphi^*: R\to W_s$ is surjective,  it therefore suffices to show that $\varphi^*(t)$ is a uniformizing parameter, i.e., that
$W_s\tt_R \F=\F$. 
 By the universal property of $\Cal M$, this says  
that the locus in
$\Spec\ W_s/pW_s$ where there exists an $\OK$-linear isomorphism
$\alpha: \OK\tt F_s\to \OK\tt G$  is equal to $\Spec \FF$. According  
to Lemma~\ref{OKendos}  we can write
$\alpha=1\tt \alpha_0+\delta\tt \alpha_1$, where $\alpha_0$ and $ 
\alpha_1$
are homomorphisms from $F_s$ to $G$. By identifying the special  
fibers of $F_s$ and
$G$, $\alpha$ becomes a unit in $O_D$. But then $\alpha_0$ or $ 
\alpha_1$ is a
unit in $O_D$, and hence one of them defines an isomorphism from $F_s 
\longrightarrow F_0$
over this locus. By \cite{wewers1}, Cor.~4.7, this implies that the  
locus in question is reduced to the special point.
\end{proof}
\begin{defn}\label{defn6.6}
Let $\ZZ_s$ be the divisor on $\Cal M$ defined by the image of $ 
\varphi_s$.
\end{defn}

\section{Deformations of homomorphisms}

We consider the following lifting problem. As in
\cite{wewers1}, suppose that $A$ is a finite extension of $W$ with  
uniformizer $\l$,
and let $A_m = A/\l^{m+1}$. We also let $e$ be the ramification index  
of $A$ over $W$
and denote by $\ord_A$ the discrete valuation on $A$ with $\ord_A(\l) 
= 1/e$.
For integers $r$, $s\ge0$ let $F_r$ and $F_s$
be quasi-canonical liftings defined over $A$. Suppose that a  
homomorphism
$$\mu: (\OK\tt F_0)\tt_W\F \lra (\OK\tt F_s)\tt_{W_s}\F$$
is given. Let
$m_s(\mu)$ be the maximum $m$ such that
$\mu$ lifts to a homomorphism from $(\OK\tt F_0)\tt_W A_m$ to $(\OK 
\tt F_s)\tt_{W_s} A_m$.
Also, as in the first part of (iii) of Lemma~\ref{OKendos},  write
\begin{equation}\label{mumatrix}
\mu= \begin{pmatrix}\mu_1&\mu_2\\\mu_3&\mu_4\end{pmatrix},
\end{equation}
with $\mu_i\in \Hom(F_0\tt_W\F,F_s\tt_{W_s}\F) = O_D$. 

Our aim is to prove the following theorem.

\begin{theo}\label{tlinearThm} Write $\mu$ as in (\ref{mumatrix}),  and
suppose that
$$\mu_i\in (\Pi^s\OK + \Pi^{l_i} O_D) \setminus (\Pi^s\OK + \Pi^{l_i 
+1}O_D)$$
for integers $l_i\ge 0$. Let $l=\min_i\{l_i\}$.
Then
$$m_s(\mu) =  e/e_s\cdot \begin{cases} {\disp \frac{p^{l+1}-1}{p-1}}& 
\text{if $l<s$,}\\
\nass
\nass
{\disp\frac{p^s-1}{p-1}}+ \frac12\,(l+1-s)\,e_s&\text{if $l\ge s$.}
\end{cases}
$$
\end{theo}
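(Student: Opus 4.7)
The plan is to reduce the computation to a single scalar lifting problem and then to apply an explicit display-theoretic calculation. First, by Lemma~\ref{OKendos}(iii), the identification $\Hom(\OK\tt F_0,\OK\tt F_s)\simeq M_2(\Hom(F_0,F_s))$ is functorial under base change, so over each $A_m$ we get the same matrix decomposition. Consequently a lift of $\mu$ to $A_m$ is precisely a lift of each matrix entry $\mu_i$ as a homomorphism $F_0\otimes A_m\to F_s\otimes A_m$, giving
$$m_s(\mu)=\min_i m_s(\mu_i).$$
It therefore suffices to prove the formula for a single element $\nu\in O_D$ with invariant $l=\min\{v_\Pi(\nu-\eta):\eta\in\Pi^s\OK\}$, where $v_\Pi$ is the $\Pi$-adic valuation on $O_D$.

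Second, I would strip off the ``trivially liftable'' part of $\nu$. Lemma~\ref{frobtwist} tells us that every element of $\Pi^s\OK$ extends canonically to an honest homomorphism $F_0\to F_s$ over $W_s\subset A$. Writing $\nu=\eta+\nu'$ with $\eta\in\Pi^s\OK$ and $v_\Pi(\nu')=l$ maximal (so that no further element of $\Pi^s\OK$ can be absorbed into $\eta$), we get $m_s(\nu)=m_s(\nu')$. The task is then: for $\nu'\in\Pi^l O_D$ representing a nonzero class in $O_D/(\Pi^s\OK+\Pi^{l+1}O_D)$, determine the exact deformation length.

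The main (and essentially only genuine) obstacle is this last computation, which I would carry out using Zink's theory of displays, foreshadowed in the Introduction as a key input to the paper. The displays of $F_0$ and $F_s$ over $A$ are of rank $2$, and a lift of $\nu'$ over $A_m$ amounts to a $W(A_m)$-linear map between their displays that respects Hodge filtrations and reduces to $\nu'$ on the special fiber. Working out the resulting congruence in terms of the explicit Tate module description $T_p(F_s)=(\Z_p\cdot p^{-s}+\OK)\cdot t$ and using the ramification index $e_s=p^{s-1}(p+1)$ of $M_s/M$ should yield the formula. The two cases reflect two different mechanisms: for $l<s$, the lifting obstruction increases geometrically with $l$, producing the partial sum $1+p+\dots+p^l=(p^{l+1}-1)/(p-1)$; for $l\ge s$, the $\Pi^s\OK$-flexibility has been fully exhausted, and each further unit of $\Pi$-adic depth contributes an arithmetic-progression increment of $e_s/2$, reflecting the $\sigma$-semilinear behaviour of $\Pi$ relative to the $\OK$-action on the quasi-canonical tower. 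The uniform factor $e/e_s$ simply converts lengths measured in $W_s$-units into lengths in $A$-units.

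I expect the delicate part to lie in establishing the precise $e_s/2$ increment beyond level~$s$: this is where the quasi-canonical twist genuinely interacts with the $\OK$-action in a way that has no analog in Gross's classical ($s=0$, no twist) setting, and it will likely require a direct manipulation of the display of $F_s$ together with the explicit isogeny $\psi_s$ reducing to $\Pi^s$ on the special fibre. Once this single-element formula is in hand, the statement of the theorem follows immediately from the reduction $m_s(\mu)=\min_i m_s(\mu_i)$ established in the first two paragraphs.
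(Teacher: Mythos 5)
Your first reduction, $m_s(\mu)=\min_i m_s(\mu_i)$, is exactly how the paper begins, and the observation that subtracting $\eta\in\Pi^s\OK$ (which always lifts by Lemma~\ref{frobtwist}) leaves $m_s$ unchanged is correct. From there, however, the proposal contains a genuine gap: the entire quantitative content of the theorem is delegated to a single sentence, ``Working out the resulting congruence in terms of the explicit Tate module description \dots should yield the formula.'' That is precisely the hard part, and you do not carry it out. A direct display computation of the lifting order in the quasi-canonical setting is substantial — indeed the paper does perform one such computation, Proposition~\ref{length}, and it runs to several pages even though it only treats $F_0$ mapping to itself in equal characteristic. There is no reason to expect the general mixed-level case $(F_0, F_s)$ over $A_m$ to be shorter or easier, and the heuristic dichotomy you describe (geometric growth below level $s$, arithmetic increments of $e_s/2$ above) is a description of the answer, not an argument for it.

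The paper avoids the display computation entirely by reducing to previously established results. It introduces $l_{r,s}(\psi)$ and proves the more general Proposition~\ref{prop6.3} for $n_{r,s}(\psi)$ by recursion: the one-step relation $n_{r,s+1}(\Pi\psi)=n_{r,s}(\psi)+e/e_{s+1}$ from~\cite{rapo} (quoted as Lemma~\ref{onestep}) lets one peel off a power of $\Pi$ at each step, terminating either at $n_{r,r}$, whose value is taken from~\cite{vollaardARGOS} Theorem~2.1, or at the unit case $l_{r,s}(\psi)=0$, whose value $n_{r,s}(\psi)=(e/e_s)e_r$ is quoted from \cite{KRYbook}, Prop.~7.7.7. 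These three inputs assemble into the claimed closed form. So if you want to complete your argument along your proposed route you must actually execute the display calculation — a nontrivial task and a genuinely different proof strategy from the paper's — whereas the paper's argument is a clean bookkeeping exercise once the cited results are in hand.
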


\begin{proof}
Note that $\mu$ lifts to $A_m$ if and only if the components  $\mu_i$  
all lift
to homomorphisms $F_0\tt_W A_m \rightarrow F_s\tt_{W_s} A_m$.
Recall from \cite{wewers2}, section 1.4, that for a homomorphism $ 
\psi:  F_r\tt_{W_r}\FF \lra  F_s\tt_{W_s}\FF$,
$n_{r,s}(\psi)$ is defined to be the maximum $m$ such that $\psi$  
lifts to a homomorphism
$F_r\tt_{W_r} A_m \rightarrow F_s\tt_{W_s} A_m$.  Thus,
$$m_s(\mu) = \min_i\{n_{0,s}(\mu_i)\}.$$

So, we are reduced to determining  the quantities $n_{0,s}(\mu_i)$. To this end, we prove the next Proposition, a slight extension of  \cite{wewers2}, Proposition 1.2. 

As in \cite{wewers2}, let $H_{r,s}\subset D$ be the subset of  
elements $\phi$
that lift to homomorphisms from $F_r$ to $F_s$.
For example, $H_{s,s}= \OKs$.
In general, if $s\geq r$, with the conventions introduced in the  
previous section,  $H_{r,s} = \Pi^{s-r}\OKr$ and  there is an  
isomorphism
\begin{equation}\label{scaleHrs}
H_{r,s} \isoarrow H_{r,s+1}, \qquad \phi \mapsto \Pi\psi,
\end{equation}
cf.~\cite{wewers2}, Proposition 1.1, 1). 
Also, passage to dual isogenies shows that $n_{r,s}(\psi)=n_{s,r} 
(\psi^*)$. Hence we may always assume that $s\geq r$, as we shall do  
from now on.
For $\psi\in O_D \setminus H_{r,s}$,
let
\begin{equation}\label{lrsdef}
l_{r,s}(\psi) = \max\{\,v(\psi+\phi)\,\mid \,\phi\in H_{r,s}\,\}
\end{equation}
where $v$ is the valuation on $D$ with $v(\Pi)=1$. More explicitly,
$l=l_{r,s}(\psi)$ is the positive integer such that
\begin{equation}\label{betterlrsdef}
\psi \in (\Pi^{s-r}\OKr + \Pi^lO_D) \setminus  (\Pi^{s-r}\OKr+ \Pi^{l 
+1}O_D).
\end{equation}
Note that, if $v(\psi) < s-r$, then $l_{r,s}(\psi) = v(\psi)$.  
If $v(\psi)\ge s+r$,
then $l_{r,s}(\psi) +r -s \ge 2r$ is odd, cf.~\cite{vollaardARGOS}, Remark 2.2.

\begin{prop}\label{prop6.3} Let $l= l_{0,s}(\psi)$. Then
$$n_{0,s}(\psi) = e/e_s\cdot \begin{cases} {\disp \frac{p^{l+1}-1} 
{p-1}}&\text{if $l<s$,}\\
\nass
\nass
{\disp\frac{p^s-1}{p-1}}+ \frac12\,(l+1-s)\,e_s&\text{if $l\ge s$.}
\end{cases}
$$
\end{prop}
\begin{proof} In fact, we will determine $n_{r,s}(\psi)$ for any $r 
\leq s$. First, by \cite{vollaardARGOS}, Theorem~2.1, if $\psi\in O_D 
\setminus H_{r,r}$ with $l_{r,r}(\psi) = l$, then
$$n_{r,r}(\psi) = e/e_r\cdot
\begin{cases}
2\,{\disp\frac{p^{l/2+1}-1}{p-1}} -p^{l/2}&\text{if $l\le 2r$ is  
even,}\\
\nass
\nass
2\,{\disp\frac{p^{(l+1)/2}-1}{p-1}}&\text{if $l\le 2r$ is odd,}\\
\nass
\nass
2\,{\disp\frac{p^{r}-1}{p-1}}+ \frac12(l+1-2r)e_r&\text{if $l\ge  2r-1 
$.}
\end{cases}
$$

Next, we recall that Lemma~3.6 of \cite{rapo}  is the following (note  
that $e/e_{s+1}=\ord_A(\pi_{s+1})$ where $\pi_{s+1}$ is a uniformizer  
of $W_{s+1}$):
\begin{lem}\label{onestep}
Suppose that $F_r$, $F_s$ and $F_{s+1}$ are defined over $A$
and that $\psi \in O_D\setminus H_{r,s}$ for $r\le s$.
Then
$$n_{r,s+1}(\Pi \psi) = n_{r,s}(\psi) +e/e_{s+1}.$$\qed
\end{lem}

Now suppose that $s>r$ and that $\psi \in O_D \setminus H_{r,s}$ with  
$l=l_{r,s}(\psi)\ge s-r$.
Then, by (\ref{scaleHrs}) and (\ref{betterlrsdef}),  $\Pi^{r-s}\psi  
\in O_D \setminus H_{r,r}$, and,
by Lemma~\ref{onestep},
we have
\begin{equation}\label{nrsformula}
n_{r,s}(\psi) = \frac{e}{e_s}+\frac{e}{e_{s-1}}+ \dots + \frac{e}{e_{r 
+1}} + n_{r,r}(\Pi^{r-s}\psi)\ .
\end{equation}

Next suppose that $s>r$ and that $l=l_{r,s}(\psi)<s-r$. Then we may  
assume that $v(\psi)=l=l_{r,s}(\psi)$. In this case, we may pull out $ 
\Pi^l$ of $\psi$, and obtain
\begin{equation}\label{nrsformula2}
n_{r,s}(\psi) = \frac{e}{e_s}+\frac{e}{e_{s-1}}+ \dots + \frac{e}{e_ 
{s-l+1}} + n_{r,s-l}(\Pi^{-l}\psi)\ .
\end{equation}
Now $\Pi^{-l}\psi\in O_D^\times$, so that a lift of $\Pi^{-l}\psi$  
over $A_m$
defines an isomorphism $F_r \tt A_m \isoarrow F_s\tt A_m$.
\begin{lem} Suppose that $l_{r,s}(\psi)=0$. Then $$n_{r,s}(\psi) = (e/ 
e_s)\cdot e_r.$$
\end{lem}
\begin{proof} This follows from \cite{KRYbook}, Prop. 7.7.7.
\end{proof}

Thus, if $\psi \in O_D \setminus H_{r,s}$ with $l=l_{r,s}(\psi)<s-r$,  
we obtain
$$n_{r,s}(\psi) = \frac{e}{e_s}+\frac{e}{e_{s-1}}+ \dots + \frac{e}{e_ 
{s-l+1}}+(e/e_s)\cdot e_r\ .$$
Since, for $0\le k <s$,  $e_s/e_{s-k} = p^k$ and $e_0=1$, we obtain  
the  expressions claimed in  Proposition~\ref{prop6.3}. Theorem~\ref{tlinearThm} follows immediately.
\end{proof}\end{proof}

\section{Computation of intersection multiplicities}\label 
{sectionmultcomp}

We now return to the situation at the end of section 5.
%and show how to reduce the $n=2$
%deformation problem described there to the deformations of  homomorphisms discussed in the previous section.
%This will allow us to deduce Theorem~\ref{theomult} from Theorem~\ref {tlinearThm}.
%Recall that we have the pair of $\OK$-linear homomorphisms $\yy =  [y_1,y_2]$,
%$y_i:\bar \YY \rightarrow \bold X=\bar\YY\times\YY$, with $y_1=\inc_1 \circ \Pi^a$ and $y_2=\inc_2 \circ \Pi^b$
%with $a$ even and $b$ odd.
%For $i=1$, $2$, let $\ZZ(y_i)$ be the locus in
%$\Cal M$ where $(\bold X, \iota,\l_{\bold X};y)$ deforms.  Then $\ZZ (y_i)$ is a divisor on $\Cal M$.
To compute the intersection number (\ref{deforminter}), we first  
decompose the divisors $\ZZ(y_i)$.

%\end{proof}
%\end{document}

\begin{prop}\label{Zycompo}
As divisors on $\M$
$$\ZZ(y_1) = \sum_{\substack{s=0 \\ \snass\text{\rm \ even}}}^{a} \ZZ_s
\qquad\text{and}\qquad
\ZZ(y_2) = \sum_{\substack{s=1 \\ \snass\text{\rm \ odd}}}^{b} \ZZ_s.$$
Here $\ZZ_s$ is the divisor on $\M$ given in Definition~\ref{defn6.6}. 
\end{prop}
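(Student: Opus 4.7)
\emph{Proof plan.}

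The plan is to split the equality into three parts: identify which $\ZZ_s$ sit inside $\ZZ(y_k)$, rule out any other irreducible components, and verify that every multiplicity equals one. The analyses for $y_1$ and $y_2$ are symmetric, so I describe only the argument for $y_1$.

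\emph{Stage 1 (Identifying the components inside $\ZZ(y_1)$).} The containment $\ZZ_s \subseteq \ZZ(y_1)$ is equivalent to $y_1:\YYbar \to \bold X$ lifting to a homomorphism $\bar Y \to X^{(s)}$ over all of $W_s$. By Lemma~\ref{directsum}, $X^{(s)} = \OK \tt F_s \simeq F_s \times \bar F_s$, so such a lift amounts to a pair of lifts $\bar Y \to F_s$ and $\bar Y \to \bar F_s$. Via the identification (\ref{unnatural})---the switch when $s$ is even, the identity when $s$ is odd---the map $y_1 = \inc_1 \circ \Pi^a$ becomes the pair $(0,\Pi^a)$ for $s$ even and $(\Pi^a,0)$ for $s$ odd. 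Forgetting the $\OK$-action, $\bar Y$, $F_s$, $\bar F_s$ all have the same underlying $p$-divisible group $G$, and by Lemma~\ref{frobtwist} the lifts between any two of them form the set $H_{0,s} = \Pi^s\OK \subset O_D$. Hence the lift exists precisely when $\Pi^a \in \Pi^s\OK$, i.e.\ when $s\le a$ and $a-s$ is even. Since $a$ is even, this selects exactly $s\in\{0,2,\ldots,a\}$.

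\emph{Stage 2 (No other components).} By Proposition~\ref{divisor}, $\ZZ(y_1)$ is a relative divisor, hence horizontal, and equals the scheme-theoretic closure of its finite-point generic fiber. A closed point $\xi$ of $\ZZ(y_1)_\eta$ with residue field $L$ corresponds to a deformation $(X,\iota,\lambda)$ of $\bold X$ to $L$ carrying a lift of $y_1$; after saturating the image, this yields an $\OK$-linear embedding $\bar Y_L \hookrightarrow X$. Comparing signatures, $X/\bar Y_L$ has signature $(1,0)$ and height $2$, hence is the canonical lift $Y_L$. The polarization pairs $\bar Y_L$ with $Y_L$ dually, and the remaining $\OK$-module data classifying the extension corresponds bijectively to a quasi-canonical lift $F_s$ of some level $s\ge0$. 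Consequently $\xi \in \ZZ_s$, so no components outside $\bigcup_s \ZZ_s$ appear.

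\emph{Stage 3 (Each multiplicity is one).} For $s$ in the list, let $\mathfrak p_s \subset \widehat{\Cal O}_{\M,\xi}$ denote the prime defining $\ZZ_s$ at the closed point $\xi$, and let $f$ be a local equation of $\ZZ(y_1)$. Stage~1 gives $f \in \mathfrak p_s$; I would show $f \notin \mathfrak p_s^2$ by extending $\Spf W_s \hookrightarrow \M$ to a first-order thickening $\Spf W_s[u]/(u^2) \to \M$ representing a nonzero element of the normal space $\mathfrak p_s/\mathfrak p_s^2$, and verifying that the $\OK$-linear lift of $y_1$ over $W_s$ fails to extend to this thickening. The verification is a first-order obstruction calculation carried out on displays, analogous in spirit to (but simpler than) the argument in Proposition~\ref{divisor} showing that the defining equation of $\ZZ(x)$ is not divisible by $p$, using Zink's display theory as flagged in the introduction.

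The main obstacle is Stage~3. Stages~1-2 are largely bookkeeping and structure theory, whereas Stage~3 requires a delicate first-order deformation-theoretic calculation at each $\ZZ_s$: one must track the identifications of Lemma~\ref{directsum} and (\ref{unnatural}) through a display-theoretic computation of the lifting obstruction transverse to $\ZZ_s$ and confirm it is nonzero to first order.
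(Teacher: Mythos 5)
Your proof diverges from the paper in two substantial ways, and contains one outright gap.

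\textbf{Stage 1 has a technical flaw.} You invoke Lemma~\ref{directsum} to write $X^{(s)}=\OK\tt F_s\simeq F_s\times \bar F_s$ over $W_s$, but that lemma requires the input group to already carry an $\OK$-action, and for $s\ge 1$ the quasi-canonical lift $F_s$ is only an $\OKs$-module. The splitting holds over $W_s$ only for $s=0$; over the special fiber it holds for all $s$ (since $F_s\tt\FF=G$ is an $\OK$-module), which is what formula (\ref{twistid}) uses. So the reduction of the lifting question to ``a pair of lifts $\bar Y\to F_s$ and $\bar Y\to\bar F_s$'' is not valid as written. The paper instead exhibits explicit lifts $\tilde y_1 = p^{(a-s)/2}\,\psi_s\circ\gamma\circ\inc_1$ (resp.\ $\tilde y_2$) built from the isogeny $\psi_s:\Xo\to\Xs$ and the isomorphism $\gamma:\bar Y\times Y\isoarrow \Xo$, and checks their reductions against Lemma~\ref{frobtwist}; no splitting of $X^{(s)}$ over $W_s$ is needed. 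Your conclusion (the inclusion $\sum_{s}\ZZ_s\le\ZZ(y_i)$) is right, but the argument needs to be repaired along these lines.

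\textbf{Stages 2--3 take a different and harder route than the paper, and Stage 3 is a genuine gap.} The paper does not rule out extra components or compute multiplicities directly. Instead, once the inequality of divisors $\sum_s\ZZ_s\le\ZZ(y_i)$ is in hand, it compares the intersection multiplicities of both sides with the special fiber $\M_p=\Spf\FF[[t]]$: the left side contributes $\sum e_s=\frac{p^{a+1}-1}{p-1}$ (resp.\ $\frac{p^{b+1}-1}{p-1}$), and the right side is computed by Zink's Proposition~\ref{length}, which determines the length of $\ZZ(y)_p$ via an explicit display-theoretic calculation in equal characteristic. Since $\ZZ(y_i)$ has no vertical component (Proposition~\ref{divisor}), matching intersection numbers with $\M_p$ forces equality of divisors. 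This entirely bypasses both your Stage~2 and Stage~3. Your Stage~2 sketch (``saturate the image, compare signatures, identify the quotient with the canonical lift, match the extension data with a quasi-canonical lift'') is plausible in outline but leaves several steps unjustified, and your Stage~3, by your own acknowledgment, is only a plan: the first-order obstruction computation showing $f\notin\mathfrak p_s^2$ at each $\ZZ_s$ is not carried out. That computation is exactly the kind of work that Zink's Proposition~\ref{length} packages once and for all, which is why the paper's approach is both cleaner and complete where yours is not. If you want to pursue your route, you would essentially have to reproduce a display computation comparable in difficulty to the one in Proposition~\ref{length}, so you gain nothing over the paper's method.
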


\begin{proof} We begin by showing that each $\ZZ_s$ for $s$ in the  
given range
lies in $\ZZ(y_i)$.  Suppose that $F_s$ is a quasi-canonical lift of  
level $s$. There is then a unique
isogeny $\psi_s:F_0 \rightarrow F_s$ of degree $p^s$ such that the  
reduction
$$\bar\psi_s: F_0\tt_W \F\lra F_s\tt_{W_s} \F$$
is equal to $\Pi^s$. Here recall that, for any quasicanonical lift  
$F_s$, an identification $F_s\tt_{W_s}\F = G$ is given
so that $\bar\psi_s$ is identified with an element of $O_D$.
We also write $\psi_s$ for the corresponding isogeny
$$\psi_s: \Xo \lra \Xs.$$
There is an isomorphism
$$\gamma: \bar Y \times Y \isoarrow \Xo$$
given by composing the isomorphism $Y\times \bar Y \isoarrow \Xo$
of Lemma~\ref{directsum} with the switch of factors.
We then obtain an $\OK$-linear isogeny
$$\psi_s\circ\gamma: \bar Y \times Y\lra \Xs.$$
Note that the diagram
$$
\begin{matrix}
\Xo\tt\F  &  \overset{\bar\psi_s}{\lra} &\Xs\tt\F\\
\nass
\bar\gamma\,\uparrow\,\phantom{\bar\gamma}&{}&\phantom{\rho^{(s)}}\, 
\uparrow\,\rho^{(s)}\\
\nass
\bar\YY\times \YY&\overset{g_o^s}{\lra}& \bar\YY\times \YY
\end{matrix}
$$
is commutative where $g_o = \text{sw}\circ (\Pi\times \Pi)$ where $ 
\text{sw}$ is the switch of factors.
The map
$$\psi_s\circ\gamma\circ \inc_1:\bar Y \lra \Xs$$
is an $\OK$-linear homomorphism whose reduction is
$$\overline{\psi_s\circ\gamma\circ \inc_1} = \begin{cases} \inc_1 
\circ \Pi^s&\text{if $s$ is even,}\\
\nass
\inc_2\circ \Pi^s&\text{if $s$ is odd.}
\end{cases}
$$
Now let
\begin{equation}\label{Zslifts}
\begin{cases}
\tilde y_1 = p^{(a-s)/2}\psi_s\circ \inc_1&\text{if $s$ is even}\\
\nass
\tilde y_2 = p^{(b-s)/2}\psi_s\circ \inc_1&\text{if $s$ is odd.}
\end{cases}
\end{equation}
Then, for $s$ of the correct parity,  $\tilde y_i: \bar Y \rightarrow  
\Xs$
is a lift of $y_i$.
This  shows that the divisor $\ZZ_s$ is a component of $\ZZ(y_1)$  
(resp. $\ZZ(y_2)$ )
for
$0\le s\le a$ even (resp. $1\le s\le b$, odd).
Hence we obtain inequalities of divisors on $\Spec W [[t]]$,
$$\sum\limits_{\substack{s = 0\\{\rm \ even}}}^a \ZZ_s\leq \ZZ (y_1) 
\ ,  \text{ resp. }
\sum\limits_{\substack{s = 1\\{\rm \ odd}}}^b \ZZ_s\leq \ZZ (y_2) .$$
In order to show that these inequalities are equalities, it suffices to
show that the intersection multiplicities of both sides with the special
fiber $\M_p = \Spec k[[t]]$ are the same. For the LHS we obtain for  
these
intersection multiplicities
\begin{equation*}
\sum\limits_{\substack{s = 0\\{\rm \ even}}}^a \ZZ_s\cdot\M_p = \sum 
\limits_{\substack{s = 0\\{\rm \ even}}}^a e_s = 1+ (p+p^2)+\ldots +  
(p^{a-1}+p^a) = \frac{p^{a+1}-1}{p-1}
\end{equation*}
resp.
\begin{equation*}
\sum\limits_{\substack{s = 1\\{\rm \ odd}}}^b \ZZ_s\cdot\M_p = \sum 
\limits_{\substack{s = 1\\{\rm \ odd}}}^b e_s = (1+p)+(p^2+p^3)+ 
\ldots + (p^{b-1}+p^b) = \frac{p^{b+1}-1}{p-1}\ .
\end{equation*}
The assertion now follows from the following proposition.
\end{proof}
Let $y:\overline{\YY}\to \bold X$ be an $\OK$-linear homomorphism with
$y^\ast\circ y\neq 0$.  Consider the  
universal
deformation of $(\bold{X}, \iota , \lambda_{\bold X})$  in
equal characteristic over $\M_p = \Spf\ \FF [[t]]$, and the maximal  
closed formal subscheme
$\ZZ(y)_p$ of $\M_p$, where $y$ deforms into a  
homomorphism
$y :\overline{\YY}\times_{\Spec\FF}\ZZ (y)_p\to X\times_{\M_p}\ZZ (y) 
_p$.
The proof of the following proposition is due to Th. Zink.
\begin{prop}\label{length}
The length of the Artin scheme $\ZZ (y)_p$ is equal to 
$\frac{p^{v+1}-1}{p-1}$, 
where  $v$ is the $D$-valuation of the element
 $y^\ast\circ y\in O_D$ (maximal power of $\Pi$ dividing $y^\ast\circ y$).
\end{prop}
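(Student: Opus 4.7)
The plan is to follow Zink's approach, which invokes his theory of displays and windows over $W(\FF[[t]])$ to reduce the problem to an explicit power-series equation, whose vanishing order is then computed via a Frobenius-driven recursion.

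First, one writes down the display of the universal equal-characteristic deformation of $(\bold X,\iota,\lambda_{\bold X})$ over $\M_p = \Spf \FF[[t]]$. Starting from the Dieudonn\'e modules $\MMbar^0 \oplus \MM^0$ of $\bold X = \bar\YY \times \YY$ recalled in Remark~\ref{rem10case}, the universal one-parameter deformation is obtained by a $t$-perturbation of the Hodge filtration. The combined constraints of $\OK$-equivariance, the signature condition $(1,1)$, and compatibility with the polarization force this perturbation to depend on a single parameter, which may be taken to be $t$ itself.

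Next, one translates the lifting condition. By Grothendieck--Messing theory (or directly as a morphism of windows), $y$ lifts over $\FF[[t]]/(t^{N+1})$ if and only if the induced map of Dieudonn\'e crystals carries the Hodge filtration of $\bar Y$ into the $t$-perturbed Hodge filtration of the universal deformation modulo $t^{N+1}$. Because the canonical lift $\bar Y$ has a rigid (``constant'') Hodge filtration, this condition collapses to a single equation $f_y(t) \equiv 0 \bmod t^{N+1}$ in $\FF[[t]]$, and the length of $\ZZ(y)_p$ equals $\ord_t f_y(t)$.

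The vanishing order of $f_y(t)$ is then determined by an iteration driven by the $\sigma$-linearity of the $V$-operator in the display. At each iteration, one power of $\Pi$ is removed from $y^\ast\circ y \in O_D$ and the existing vanishing order is multiplied by $p$; after $v+1$ such steps, the $\Pi$-divisibility $v = v_D(y^\ast\circ y)$ is exhausted. Summing the contributions yields
$$
\ord_t f_y(t) \;=\; 1 + p + p^2 + \dots + p^v \;=\; \frac{p^{v+1}-1}{p-1}.
$$
The main obstacle is to set up the display explicitly with all its decorations (the $\OK$-action, which twists by $\sigma$ via $\Pi$ between the two factors of $\bold X$; the polarization; and the signature condition) and to verify that the Frobenius-twisted recursion produces precisely the geometric series $1+p+\cdots+p^v$. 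The structure of this recursion is parallel to Gross's original iterative computation for quasi-canonical lifts, now adapted to the equal-characteristic deformation of $(\bold X,\iota,\lambda_{\bold X})$.
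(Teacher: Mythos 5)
Your plan correctly identifies Zink's approach, which is precisely what the paper uses: work over the frame $A=W[[t]]$ with $\sigma(t)=t^p$, classify the equal-characteristic deformations by windows/displays, reduce the lifting problem for $y$ to an explicit recursive matrix equation, and exploit the $\sigma$-twist to produce the geometric series $1+p+\cdots+p^v$. In structure your sketch matches the paper's proof.

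However, the content of the proof is the part you label ``the main obstacle'' and then omit. The paper's argument consists almost entirely of: (a) writing down the universal display $(M,\alpha_t)$ explicitly in terms of the $2\times 2$ blocks $U=\begin{pmatrix}0&1\\p&t\end{pmatrix}$ and $\breve U=\begin{pmatrix}0&1\\p&-t\end{pmatrix}$, and checking compatibility with the $\OK$-action and polarization; (b) translating ``$y$ lifts mod $t^{N+1}$'' into the coupled recursion $Y(n+1)=U^{-1}\sigma(X(n))S$, $X(n+1)=\breve U^{-1}\sigma(Y(n))S$ with $S=\begin{pmatrix}0&1\\p&0\end{pmatrix}$; and (c) an inductive claim identifying the leading terms of $X(n),Y(n)$, from which one reads off exactly when $p$-integrality fails, namely after $v$ steps at the order $t^{\,1+p+\cdots+p^v}$. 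Without (a)--(c) there is no proof, only a description of what the proof should look like. Two further imprecisions worth flagging: the recursion does not simply ``multiply the vanishing order by $p$'' at each step --- the $\sigma$-twist raises the $t$-exponent by a factor $p$ and the linear $t$-entry of $U^{-1}$ then adds $1$, and this alternation is exactly why the total is a geometric series rather than $p^{v}$; and the reduction to ``a single equation $f_y(t)\equiv 0$'' is not achieved by citing Grothendieck--Messing on the Hodge filtration of $\bar Y$, but by tracking integrality of the matrix coefficients through the display recursion --- the equation only emerges a posteriori. You have the right outline; the paper's proof is the computation you postponed.
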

\begin{proof}
  ({\it Zink}): We are going to use the theory of displays \cite 
{displays} and their windows \cite{windows}. Let $R = \FF[[t]]$ and
$A = W[[t]]$. We extend the Frobenius automorphism $\sigma$ on $W$ to  
$A$ by setting $\sigma (t) = t^p$.
For any $a\geq 1$, we set $R_a = \FF[[t]]/t^a$ and $A_a = A/t^a$.  
Then $A$ is a
frame for $R$, resp. $A_a$ is a  frame for $R_a$, with augmentation  
ideal generated
by $p$.

  We consider the category of $p$-divisible groups over $R$ which  
have no \'etale part modulo $t$ or, in other
words, the category of formal groups over $R$ which are $p$-divisible
modulo $t$. For simplicity of expression we call them  formal $p$- 
divisible groups over $R$.

Formal $p$-divisible groups over $R$ are classified by $A\!-\! R$-windows
$(M,M_1,\phi, \phi_1)$, which satisfy a nilpotence
condition, \cite{windows}, Thm. 4.  Recall that
an $A\!-\!R$-window consists of a $4$-tuple $(M, M_1, \phi,  \phi_1)$, where
$M$ is a free $A$-module of finite rank and $M_1$ is a submodule  
containing $p M$
such that $M/M_1$ is a free $R$-module. Furthermore, $\phi : M\to M$  
is a
$\sigma$-linear endomorphism such that $\phi (M_1)\subset p M$ and  
such that
$\phi (M_1)$ generates $p M$ as an $A$-module (this last condition is  
easily
seen to be equivalent to condition (ii) in \cite{windows}, Def. 2).  
Finally
$\phi_1 = \frac{1}{p}\ \phi : M_1\to M$.

There is a base change functor from $A\!-\!R$-windows to
$W\!-\!\FF$-windows. This is compatible with base-changing formal $p$-divisible groups. (To see this, one  first passes 
to the display associated to the window, then applies the base change property  of displays, cf.~\cite{displays}, Definition 20, and then passes  back to the associated window. We are using here that the frames for $R$ and for $\FF$ are chosen in a compatible way.) The  category of $W\!-\!\FF$-windows is isomorphic to the
category of ordinary Dieudonn\'e modules over $k$. The nilpotence
condition says that $V$ is topologically nilpotent on $M_{\FF}$.  
Since we will only consider deformations of
formal $p$-divisible groups, the nilpotence condition is always  
automatically satisfied and we will therefore ignore it.

%\end{proof}
%\end{document}

  We denote by
\begin{equation*}
\phi_1^\sharp : M_1^{(\sigma)}\to M
\end{equation*}
the linearization of $\phi_1$, where $M_1^{(\sigma)} = A\otimes_{A,  
\sigma} M_1$.
\begin{lem}
$\phi_1^\sharp$ is an isomorphism.
\end{lem}
\begin{proof}
Choosing a normal decomposition, we have
\begin{equation*}
M = T\oplus L\quad ,\quad M_1 = p T\oplus L\ .
\end{equation*}
The assertion follows since we see that $\phi_1^\sharp$ induces a  
surjection
between free $A$-modules of the same rank.
\end{proof}
We obtain from $(M, M_1, \phi, \phi_1)$ the free $A$-module $M_1$ and  
the
$A$-linear homomorphism $\alpha : M_1\to M_1^{(\sigma)}$ as the  
composition
\begin{equation*}
\alpha:\ M_1\hookrightarrow M \overset{(\phi_1^{\sharp})^{-1}}{\lra}  
M_1^{(\sigma)}\ .
\end{equation*}
In this way, the category of formal $p$-divisible groups over $R$  
becomes equivalent
to the category of pairs $(M_1, \alpha)$, consisting of a free 
$A$-module of finite
rank and an $A$-linear injective homomorphism $\alpha : M_1\to M_1^ 
{(\sigma)}$,
such that Coker $\alpha$ is an $R$-module which is free. An analogous  
description
holds for the category of formal $p$-divisible groups over $R_a$.
Under this equivalence the category of formal
$p$-divisible groups with an $\OK$-action becomes equivalent to
the category of pairs $(M_1, \alpha)$, such that $M_1$ is
$\mathbb{Z}/2\mathbb{Z}$-graded and $\alpha$ is a homogeneous
morphism of degree 1.

Consider the  $p$-divisible group $\overline{\YY}$ over $\FF$ with its
action $\iota$ of $\OK$. It corresponds to the pair\footnote{In fact, for convenience,  we are writing here $M_1[1]$ (degree shift by $1$)
for the situation at hand. This has no effect on the outcome of the calculation.} $(N, \beta)$,  
where $N$ is the
$\Z /2$-graded free $W$-module of rank $2$ with
\begin{equation*}
N^0 = W(k)\cdot n_0\quad , \quad N^1 = W(k)\cdot n_1
\end{equation*}
and
\begin{equation*}
\beta (n_0) = p\otimes n_1\quad , \quad \beta (n_1) = 1\otimes n_0\ .
\end{equation*}

By base change $W \rightarrow A$ we obtain
the pair $(\mathcal N, \beta)$ over $A$ with the same defining relations  which corresponds  
to the constant $p$-divisible
group $\overline{\mathbb Y}$ over $R$.

The $p$-divisible group $\bold{X}$ over $\FF$ corresponds to the
$A_1$-module $M = N\oplus\overline{N}$, where
\begin{equation*}
M = M^0\oplus M^1\quad\text{ and }
\end{equation*}
\begin{equation*}
M^0 = A\cdot f_0\oplus A\cdot e_0\quad ,\quad M^1 = A\cdot f_1\oplus  
A e_1\ ,
\end{equation*}
and to the graded map $\alpha : M\to M^{(\sigma)}$ given by
\begin{equation*}
\begin{matrix}
\alpha (f_0) = p\otimes f_1 \quad , \ & \alpha (e_0) = 1\otimes e_1\cr
\alpha (f_1) = 1\otimes f_0 \quad , \ & \ \alpha (e_1) = p\otimes e_0\ .
  \end{matrix}
\end{equation*}
We consider the deformation of $(\bold{X}, \iota)$  given by the
free $A$-module $\mathcal M$  with the same generators as for $M$ and the homomorphism
$\alpha_t$ (in terms of the ordered basis $f_0, e_0, e_1, f_1$),

\begin{equation*}
\alpha_t=\begin{pmatrix}
  \text{\LARGE{$0$}} & \begin{matrix} 0 &1 \\
                             p & t
			   \end{matrix}\\
\begin{matrix} 0 &1\\
                p &-t
\end{matrix} &        \text{\LARGE{$0$}}
  \end{pmatrix} =
\begin{pmatrix}
  0 & U\cr
  \breve{U} & 0
  \end{pmatrix}\ .
\end{equation*}
Here
\begin{equation*}
U =
\begin{pmatrix}
  0 & 1\\
  p & t
  \end{pmatrix}\ , \text{  resp.   }\breve{U} =
\begin{pmatrix}
  0& 1\\
  p&- t
  \end{pmatrix}\ .
\end{equation*}
One checks that this deformation respects the polarization $\lambda_ 
{\bold X}$ of $\bold X$ (rewrite the deformation in terms of the  
original display $(M, M_1, \phi, \phi_1)$ and use \cite{mu-ord}, Cor.  
3.29). In fact, $(\mathcal M, \alpha_t)$ defines the universal deformation
of $(\bold X, \iota, \lambda_{\bold X})$, cf.~\cite{mu-ord}.

%\end{proof}
%\end{document}

Now let $y$ correspond to the graded $A_1$-linear homomorphism,
\begin{equation*}
\gamma : N\to M\ .
\end{equation*}
Then the length $\ell$ of the deformation space of $\gamma$ is the
maximal $a$ such that there exists a lift
$$
\tilde{\gamma}: \mathcal N\tt R_a\to\mathcal M\tt R_a\ ,
$$
making the diagram  
\begin{equation*}
\xymatrix{
\mathcal N\tt R_a\ar[r]^\beta\ar[d]_{\tilde\gamma} & \mathcal N^{(\sigma)}\tt R_a\ar[d]^{\tilde 
{\gamma}^{(\sigma)}}\\
\mathcal M\tt R_a\ar[r]^{\alpha_t} & \mathcal M^{(\sigma)}\tt R_a\ .
  }
\end{equation*}
commute. 

To calculate $\ell$, we distinguish cases. First let $v = 2r$ be even.
Applying Lemma \ref{redto2}, in this case we may postcompose $y$ with an
automorphism of $\bold{X}$ such that $y = p^r\cdot\text{inc}_1$, i.e.
$\gamma$ is given by
$\gamma = (\gamma^0, \gamma^1) = (X(0), Y(0))$, with
\begin{equation*}
X(0) =
\begin{pmatrix}
p^r & 0\\
0   & 0
\end{pmatrix}\ ,\  Y (0) = \begin{pmatrix}
0 & 0\\
0 & p^r
\end{pmatrix}\ .
\end{equation*}
In order to lift $\gamma$ mod $t^p$, we search for matrices
$X(1), Y(1)\in {\rm M}_2 (A_p)$ such that
\begin{equation*}
\begin{aligned} X(1)\equiv X(0) \mbox{ in } A_1\\
                  Y(1)\equiv Y(0) \mbox{ in } A_1
\end{aligned}
\end{equation*}
and satisfying the identities
$$\sigma (X(1))\cdot S = U\cdot Y(1)\ ,\ \sigma (Y(1))\cdot S= \breve 
{U}\cdot Y(1)\ .$$
Here
\begin{equation*}
S=
\begin{pmatrix}
0 & 1 \\
p & 0
\end{pmatrix}\ .
\end{equation*}
Note that $\sigma$ can be viewed as a map $A_1\to A_p$. Since $\sigma (X(1)) = \sigma (X(0))$ and $\sigma (Y(1)) = \sigma (Y 
(0))$,
we obtain the identities
\begin{equation}
\begin{aligned}
\sigma (X(0))\cdot S &= U\cdot Y(1)
\\
\sigma (Y(0))\cdot S &= \breve{U}\cdot X(1)\ .
\end{aligned}
\end{equation}
Since $A_p$ has no $p$-torsion, we obtain as unique solution
\begin{equation*}
\begin{aligned}
Y(1)&= U^{-1}\cdot \sigma (X(0))\cdot S
\\
X(1)&= \breve{U}^{-1}\cdot \sigma (Y(0))\cdot S\ ,
\end{aligned}
\end{equation*}
provided that the matrices on the RHS have  coefficients which are  
integral,
i.e. which lie in $W[t]/t^p$. More precisely, we obtain $\ell\geq p$  
if these
coefficients are integral; otherwise $\ell$ is the maximum power $t^a$
such that the coefficients mod $t^a$ are integral.

Inserting the values for $X(0)$ and $Y(0)$, an easy calculation shows
\begin{equation}\label{indbegin}
X(1) = X(0)\quad\text{ and }\quad Y(1) = \left(\begin{matrix}
0 & -p^{r-1} t\cr
0 & p^r
\end{matrix}\right)\ .
\end{equation}
If follows that $\ell = 1$ if $r = 0$ and $\ell\geq p$ if $r\geq 1$.  
This proves the assertion for $v=0$.

In the next step we try to lift $\gamma$ from $A_p$ to $A_{p^2}$,
in the next step from $A_{p^2}$ to $A_{p^3}$ and inductively from
$A_{p^n}$ to $A_{p^{n+1}}$ for any $n\geq 1$. At each step we use the map $\sigma: A_{p^n}\to A_{p^{n+1}}$. This gives at each
step the recursive identities
\begin{equation}\label{recrel}
\begin{aligned}
Y(n+1) &= U^{-1}\cdot\sigma (X(n))\cdot S
\\
X(n+1) &= \breve{U}^{-1}\cdot \sigma (Y(n))\cdot S
\end{aligned}
\end{equation}
which can be solved after inverting $p$ for every $n$.

\medskip

\noindent{\bf Claim:} a) $X(2i) = X(2i+1)$ {\it and}  
$Y(2i+1)=Y(2i+2)$  for all $i\geq 0$.

  \noindent b) {\it There exist polynomials $P_0, P_1, \ldots, Q_0,  
Q_1, \ldots$ in $W[t]$ such
that}
\begin{equation*}
\begin{aligned}
X (2s) &= \begin{pmatrix}
\pm p^{r-s}\cdot t^{{p^{2s-1}+\ldots +p+1}}+p^{r-s+1}\cdot P_{2s} & 0\\
p^{r-s+1}\cdot Q_{2s} & 0
\end{pmatrix}
\\
Y (2s+1) &= \begin{pmatrix}
0 & \pm p^{r-s-1}\cdot t^{p^{2s+\ldots +p+1}}+p^{r-s}\cdot P_{2r+1}\cr
0 & p^{r-s}\cdot Q_{2r+1}
\end{pmatrix}\ .
\end{aligned}
\end{equation*}

\medskip

%\end{proof}
%\end{document}

\noindent Indeed, (\ref{indbegin}) shows this for the beginning terms  
with $P_0 = Q_0 = 0$ and
$P_1 = 0, Q_1 = 1$. The higher terms follow by induction from the  
recursive relations (\ref{recrel}).

The claim shows that $\gamma$ deforms to $A_{p^{2r}}$, but not to $A_ 
{p^{2r+1}}$.
In fact the upper right coefficient of $Y(2r+1)$ shows that $\gamma$  
deforms
precisely to $A_{p^{2r}+\ldots +p+1}$, which proves the proposition in
this case.

Next we consider the case when $v = 2r+1$ is odd. In this case, Lemma  
\ref{redto2} shows
that we may postcompose $y$ with an automorphism of $\bold{X}$ such that
$y = p^r\cdot\text{ inc}_2\circ\Pi$. Hence in this case
\begin{equation*}
X(0) = \left(\begin{matrix}
0 & 0\cr
p^{r+1} & 0
\end{matrix}\right)\quad ,\quad Y(0) = \left(\begin{matrix}
0 & p^r\cr
0 & 0
\end{matrix}\right)\ .
\end{equation*}
In this case an easy calculation using the identities (\ref{recrel})  
shows that
\begin{equation*}
X(1) = \left(\begin{matrix}
p^r t & 0\cr
p^{r+1} & 0
\end{matrix}\right)\quad ,\quad Y(1) = Y (0)\ .
\end{equation*}
Inductively one shows as before

\medskip

\noindent{\bf Claim:} a) $X(2i+1) = X(2i+2)$ {\it  
and } $Y(2i) = Y(2i+1)$ for all $i\geq 0$. 

\noindent b) {\it There exist polynomials $P_0, P_1, \ldots , Q_0,  
Q_1, \ldots$ in $W[t]$
such that}

\begin{equation*}
\begin{aligned}
X (2s+1) &= \begin{pmatrix}
\pm p^{r-s}\cdot t^{p^{2s}+\ldots +p+1}+p^{r-s+1}\cdot P_{2s+1} & 0\\
p^{r-s+1}\cdot Q_{2s+1} & 0
\end{pmatrix}
\\
Y (2s) &= \left(\begin{matrix}
0 & \pm p^{r-s}\cdot t^{p^{2s-1}+\ldots +p+1}+p^{r-s+1} P_{2s}\cr
0 & p^{r-s+1}\cdot Q_{2s}
\end{matrix}\right)\ .
\end{aligned}
\end{equation*}

\medskip

By looking at the upper right coefficient of $Y(2(r+1))$, we see that  
the deformation
locus of $\gamma$ is given by $t^{p^{2r+1}+\ldots +p+1} = 0$, which  
proves the
proposition in this case.

\end{proof}

%\end{document}

By Proposition~\ref{Zycompo},
\begin{equation}\label{expand}
\ZZ(y_1)\cdot \ZZ(y_2) = \sum_{\substack{s=0\\ \snass s\, \text 
{even}}}^{a}\ZZ_s\cdot \ZZ(y_2)
= \sum_{\substack{s=1\\ \snass s\, \text{\rm \ odd}}}^{b}\ZZ(y_1) 
\cdot \ZZ_s.
\end{equation}

%Recall that, for each $s\ge 0$, we have an identification $\Xs \tt  \F = \bold X = \bar\YY\times\YY$, where $\Xs = \OK\tt F_s$
%is the quasi canonical lift of $\bold X$ of level $s$.
Let $m_s(\yy)$ be the maximum $m$ such that the $\OK$-linear  
homomorphism
\begin{equation}\label{yymap}
  \bar\YY\times \bar\YY\  \overset{\mu(\yy)}{\lra}\ \bar \YY\times  
\YY=\bold X= \Xs\tt\FF
\end{equation}
lifts to a homomorphism
$$\bar Y\times \bar Y \dra \Xs$$
over $W_s/\pi_s^{m}W_s$, where $\mu(\yy)$ has matrix $\diag(\Pi^a, 
\Pi^b)$.
%Note that, if $s$ is even (resp. odd) a lift $\tilde y_1: \bar Y  \rightarrow \Xs$ of $y_1$
%(resp. $\tilde y_2: \bar Y \rightarrow \Xs$ of $y_2$)
%is provided on the divisor $Z_s$ by (\ref{Zslifts}) in the proof of  Proposition~\ref{Zycompo}.
Then
$$m_s(\yy) = \begin{cases} \ZZ_s\cdot \ZZ(y_2)&\text{for $s$ even,}\\
\nass
\ZZ(y_1)\cdot \ZZ_s&\text{for $s$ odd.}
\end{cases}
$$

We can write (\ref{yymap})
as
\begin{equation}\label{yymap2}
\Xo\tt\F = \bar\YY\times \YY \ \overset{\mu(\yy)}{\lra}\ \bar\YY 
\times \YY = \Xs\tt\FF\ ,
\end{equation}
where we have simply taken the conjugate linear $\OK$-action on the  
second factor
of the source $\bar \YY\times \bar\YY$. The matrix for $\mu(\yy)$ is  
unchanged,
and $m_s(\yy)$ is the maximum $m$ such that  this map lifts to a map
$$\Xo \dra \Xs$$
over $W_s/\pi_s^{m}W_s$.

To apply Theorem~\ref{tlinearThm}, we need to write $\mu(\yy)$ in the  
form (\ref{mumatrix}).
We take $1$ and $\delta$ as $\Z_p$-basis for $\OK$, and hence,  for  
any $p$-divisible group $X$, we have an identification
$\OK\tt X = X\times X$.
If $X$ is a $p$-divisible group with $\OK$-action, then
the isomorphism of Lemma~\ref{directsum}
$$X\times \bar X \isoarrow \OK\tt X=X\times X$$
has matrix
$$C=\begin{pmatrix} \delta&-\delta\\1&1\end{pmatrix}.$$
Thus, the matrix for $\mu(\yy)$ is
$$\frac12\begin{pmatrix}\Pi^a-\Pi^b& \delta(\Pi^b-\Pi^a)\\
(\Pi^b-\Pi^a)\delta^{-1}&\Pi^a+\Pi^b  \end{pmatrix} ,$$
if $s$ is even, and
$$\frac12\begin{pmatrix}(\Pi^b-\Pi^a)\delta^{-1}&\Pi^a+\Pi^b \\
  \Pi^a-\Pi^b& \delta(\Pi^b-\Pi^a)\end{pmatrix} ,$$
if $s$ is odd. Now if $s\le a$ is even, then $\Pi^a\in \Pi^s\OK$  so  
that  $l=b$ in Theorem~\ref{tlinearThm}.
If $s\le b$ is odd, then $\Pi^b\in \Pi^s\OK$ and $l=a$. This yields  
the following result.
\begin{prop}\label{maininter}
For $s\le a$ even,
$$\ZZ_s\cdot \ZZ(y_2) =  \begin{cases} {\disp \frac{p^{b+1}-1}{p-1}}& 
\text{if $b<s$,}\\
\nass
\nass
{\disp\frac{p^s-1}{p-1}}+ \frac12\,(b+1-s)\,e_s&\text{if $b\ge s$.}
\end{cases}
$$
For $s\le b$ odd,
$$\ZZ(y_1)\cdot \ZZ_s= \begin{cases} {\disp \frac{p^{a+1}-1}{p-1}}& 
\text{if $a<s$,}\\
\nass
\nass
{\disp\frac{p^s-1}{p-1}}+ \frac12\,(a+1-s)\,e_s&\text{if $a\ge s$.}
\end{cases}
$$
\end{prop}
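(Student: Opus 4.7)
The plan is to apply Theorem~\ref{tlinearThm} to the explicit matrix expressions for $\mu(\yy)$ displayed just before the statement of the proposition, using the identifications $m_s(\yy) = \ZZ_s \cdot \ZZ(y_2)$ when $s$ is even and $m_s(\yy) = \ZZ(y_1) \cdot \ZZ_s$ when $s$ is odd, both of which are already recorded in the paragraph following (\ref{yymap2}). A preliminary remark is that the extension $A$ appearing in Theorem~\ref{tlinearThm} is here $W_s$, whose ramification index over $W$ is $e_s$, so the prefactor $e/e_s$ in that theorem collapses to $1$ and disappears from the final answer.

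The only real task is to compute $l = \min_i l_i$, where $\mu_i \in (\Pi^s\OK + \Pi^{l_i}O_D) \setminus (\Pi^s\OK + \Pi^{l_i+1}O_D)$ for each of the four matrix entries $\mu_i$ of $\mu(\yy)$. I would exploit the decomposition $O_D = \OK \oplus \Pi\OK$ as a direct sum of right $\OK$-modules, together with the relation $\Pi\alpha = \alpha^\sigma\Pi$. This shows that $\Pi^s\OK$ lies in the $\OK$-summand when $s$ is even and in the $\Pi\OK$-summand when $s$ is odd, and that two-sided multiplication by the units $\delta,\delta^{-1}\in\OK^\times$ preserves $\Pi^s\OK$; so the scalar factors of $\delta$ and $\delta^{-1}$ appearing in the matrix entries do not affect the containment analysis.

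For $s\le a$ even, both $\Pi^s$ and $\Pi^a$ are even powers of $\Pi$ and $\Pi^a = p^{(a-s)/2}\Pi^s \in \Pi^s\OK$; each of the four entries thus reduces modulo $\Pi^s\OK$ to $\pm\tfrac12\Pi^b$ times a unit, which lies in the $\Pi\OK$-summand with $\Pi$-valuation exactly $b$. Comparing with the $\Pi\OK$-parts of $\Pi^l O_D$ for varying $l$, one finds $l_i = b$ for each $i$, and so $l=b$. By the symmetric computation, for $s\le b$ odd one instead has $\Pi^b \in \Pi^s\OK$, the residues lie in the $\OK$-summand with $p$-valuation $a/2$, and $l_i = a$ for each $i$, so $l=a$. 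Substituting $l=b$ (resp.\ $l=a$) into Theorem~\ref{tlinearThm} and using $e/e_s=1$ produces the two stated formulas.

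The main obstacle is the mild bookkeeping needed to check that these $l_i$ are exact, i.e.\ that the residue of $\mu_i$ cannot be further cancelled by an element of $\Pi^s\OK + \Pi^{l+1}O_D$. This reduces to noting that the $\OK$- and $\Pi\OK$-components of $\Pi^{l+1}O_D$ have $p$-valuations $\lceil (l+1)/2\rceil$ and $\lceil l/2\rceil$ respectively, which is straightforward. Once this is in place, the proposition is a direct transcription of Theorem~\ref{tlinearThm}.
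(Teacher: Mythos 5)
Your proposal is correct and follows the same route as the paper: identify $\ZZ_s\cdot\ZZ(y_2)$ (resp.\ $\ZZ(y_1)\cdot\ZZ_s$) with $m_s(\yy)$ over $A=W_s$ so that $e/e_s=1$, read off the matrix entries of $\mu(\yy)$, observe that $\Pi^a\in\Pi^s\OK$ for $s\le a$ even (resp.\ $\Pi^b\in\Pi^s\OK$ for $s\le b$ odd) so that $l=b$ (resp.\ $l=a$), and substitute into Theorem~\ref{tlinearThm}. The paper states the computation of $l$ without elaboration; your use of the decomposition $O_D=\OK\oplus\Pi\OK$ and the resulting valuation bookkeeping is a correct and slightly more explicit way of verifying that $l_i$ is exactly $b$ (resp.\ $a$) for every entry, but it is not a different argument.
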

Here recall that $e_s= p^{s-1}(p+1)$ for $s\ge 1$ and $e_0=1$. Also, if
$$\ZZ_s\cap \ZZ(y_2)=\Spec W_s/\pi_s^{\ell}\ ,$$ then ${\rm ord}_A 
(\pi_s^{\ell})=
(e/e_s)\cdot \ell$.

\begin{cor} Let $r\neq s$. Then
$$\ZZ_s\cdot \ZZ_t = e_{\min\{s,t\}} .$$
\end{cor}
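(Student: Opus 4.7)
The strategy is to extract $\ZZ_s\cdot\ZZ_t$ from the explicit intersection formula of Proposition~\ref{maininter} by a telescoping argument, using the decomposition of Proposition~\ref{Zycompo}. Without loss of generality I may assume $s<t$, and I first treat the case of opposite parity.

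Suppose $s$ is even and $t$ is odd. Fix even $a\geq s$. For any odd $b\geq -1$, Proposition~\ref{Zycompo} yields
\begin{equation*}
\ZZ_s\cdot \ZZ(y_2)\;=\;\sum_{\substack{b'=1\\ b'\text{ odd}}}^{b}\ZZ_s\cdot \ZZ_{b'},
\end{equation*}
so $\ZZ_s\cdot\ZZ_t$ equals the difference between the values of $\ZZ_s\cdot \ZZ(y_2)$ at $b=t$ and $b=t-2$, with the convention that $b=-1$ contributes $0$. Inserting the closed forms of Proposition~\ref{maininter}: when $t<s$, both values are $(p^{b+1}-1)/(p-1)$ and the difference is $p^{t-1}(p+1)=e_t$; when $t\geq s$, both values are $(p^s-1)/(p-1)+\tfrac12(b+1-s)e_s$ and the difference is $e_s$. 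In either subcase the answer is $e_{\min(s,t)}$, as claimed. The opposite-parity case with $s$ odd and $t$ even is handled symmetrically by telescoping $\ZZ(y_1)\cdot \ZZ_s$ as a function of even $a$.

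The main obstacle is the same-parity case, since Proposition~\ref{Zycompo} sorts the $\ZZ_{s'}$'s strictly by parity, and so two cycles of the same parity never appear simultaneously inside a single decomposed $\ZZ(y_i)$. To get around this I would enlarge the family of test cycles: take a mixed special homomorphism $y\in\VV$ with both its $\inc_1$- and $\inc_2$-components nonzero, and run the analogue of the analysis in Sections~5--8 to identify the irreducible components of $\ZZ(y)$; if this decomposition also involves the $\ZZ_{s'}$ but now mixes both parities, a further telescoping against $\ZZ_s$ will isolate $\ZZ_s\cdot\ZZ_t$. Alternatively, via Lemma~\ref{directsum} one can attempt to reduce the deformation problem for $(\bold X,\iota,\lambda_{\bold X})$ to the classical Lubin--Tate deformation problem for a height $2$ formal group, identifying the $\ZZ_s$ with Gross's quasi-canonical divisors and inheriting the equality $\ZZ_s\cdot\ZZ_t=e_{\min(s,t)}$ from~\cite{grossqc}. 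Either route will require care to verify that the identifications are compatible with the polarizations, which is where I expect most of the work to lie.
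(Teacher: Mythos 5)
Your telescoping argument via Propositions~\ref{Zycompo} and~\ref{maininter} is exactly the derivation the paper has in mind: the corollary is stated without proof right after Proposition~\ref{maininter}, and your computation of the successive differences is correct, including at the boundary $b=s-1$ where the two branches of the formula in Proposition~\ref{maininter} happen to agree, so the edge case you didn't treat separately causes no trouble. You are also correct that this only reaches pairs $(s,t)$ of opposite parity, since Proposition~\ref{Zycompo} strictly segregates the two parities; that is in fact the only case that occurs in the expansion~(\ref{expand}), and the paper supplies no further argument, so your proposal matches what the paper actually provides.

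For the same-parity case your second suggestion is the right one and is already almost in place in the paper. The functor $F\mapsto\OK\tt F$, combined with Lemma~\ref{directsum} and the tangent-space considerations implicit in the proof that $\varphi_s$ is a closed immersion (where the decomposition $\alpha=1\tt\alpha_0+\delta\tt\alpha_1$ appears), identifies $\M$ with the Lubin--Tate deformation space of the height-$2$ group $G$, carrying $\ZZ_s$ to Gross's quasi-canonical divisor of level~$s$; the $\OK$-linear $p$-principal polarization on $\OK\tt F_s$ is determined up to $\Z_p^\times$, as the paper observes, so the polarization compatibility you worried about is automatic. Once that identification is made, the same-parity intersection number follows from the formula for $n_{r,s}$ established in the proof of Proposition~\ref{prop6.3} (equivalently from Gross's original calculation). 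So as written there is a genuine gap for the same-parity case, but it is a gap the paper itself does not fill, and your route via Lemma~\ref{directsum} is the correct way to close it.
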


By summing the $\ZZ_s\cdot \ZZ(y_2)$'s (resp. the $\ZZ(y_1)\cdot \ZZ_s 
$'s) of Proposition~\ref{maininter} as in (\ref{expand}), we obtain
the expression in Theorem~\ref{theomult}.

\newcommand{\vv}{\text{\bf v}}
\newcommand{\uu}{\text{\bf u}}

\section{Representation densities of hermitian forms}

In this section, we show that the expression given in Theorem~\ref 
{theomult} for the intersection multiplicity
in the case where the scaled fundamental matrix $\tilde T$ is $\GL_n(\OK)$- 
equivalent to $\diag(1_{n-2}, p^a,p^b)$
coincides, up to an elementary factor,  with the derivative of a  
certain representation density associated to $\tilde T$.
As explained in the introduction, this relation is the component at $p 
$ of an identity between
a global arithmetic intersection number or height, and a Fourier  
coefficient of the derivative
of an Eisenstein series on ${\rm U}(n,n)$. 
To avoid introducing additional notation, we continue to suppose that  
$\kay = \Q_{p^2}$ is the unramified
quadratic extension of $\Q_p$.

First recall that, for nonsingular matrices $S\in \Herm_m(\OK)$ and $T 
\in \Herm_n(\OK)$, with $m\ge n$,
the representation density $\a_p(S,T)$ is defined as 
 \begin{equation}
\a_p(S,T) = \lim_{k\to\infty} (p^{-k})^{n(2m-n))} |A_{p^k}(S,T)|,
\end{equation}
 where
$$A_{p^k}(S,T) =\{\ x\in M_{m,n}(\OK/p^k\OK)\mid S[x]\equiv T\mod p^k \ \},$$
where $S[x] = {}^tx S\s(x)$.
The density depends only on the $\GL_m(\OK)$- (resp. $\GL_n(\OK)$-)  
equivalence class of $S$ (resp. $T$).
An explicit formula for $\a_p(S,T)$ has been given by Hironaka, \cite 
{hironaka}.

Let $\ell(T)$ be the smallest $\ell$ such that $p^{\ell}T^{-1}\in  
\Herm_n(\OK)$.
In fact, for $k>\ell(T)$, the quantity $(p^{-k})^{n(2m-n))} |A_{p^k} 
(S,T)|$ is constant  and is non-zero
if and only if there exists an $x\in M_{m,n}(\OK)$ such that $S[x]=T$.
For $r\ge 0$, let $S_r = \diag(S,1_{r})$. Then
$$\a_p(S_r,T) = F_p(S,T; (-p)^{-r})$$
for a polynomial $F_p(S,T;X)\in \Q[X]$, as can be seen immediately
from Hironaka's formula.

Recall that the isometry class of a non-degenerate hermitian space $V 
$ of dimension $n$ over $\kay$ is determined by
its determinant
$\det(V)\in \Q_p^\times/N(\kay^\times)$. Thus, if $S$ and $T\in  
\Herm_n(\OK)$ are non-singular
with $\ord(\det(S))+\ord(\det(T))$ odd, then
$\a_p(S,T)=0$. In this case, we define the derivative of the  
representation density
$$\a'_p(S,T) = -\frac{\d}{\d X}F_p(S,T;X)\vert_{X=1}.$$

The main result of this section is the following.
\begin{prop}\label{dender}
Let $S= 1_n$ and $T=\diag(1_{n-2}, p^a, p^b)$ for $0\le a< b$ with  
$a+b$ odd.
Then $\a_p(S,T)=0$ and
$$\frac{\a_p'(S,T)}{\a_p(S,S)} =  \frac12\sum_{\ell=0}^{a} p^\ell (a 
+b-2\ell+1),$$
where
$$\a_p(S,S) =  \prod_{\ell=1}^{n}(1- (-1)^\ell p^{-\ell}).$$
\end{prop}

Comparing this expression with that given in Theorem~\ref{theomult},  
we find the following relation
between the derivative of the hermitian representation density and  
the arithmetic intersection multiplicity.

\begin{cor} Let $\ZZ_{i,j}(\xx)$ be non-empty, with associated scaled fundamental
 matrix $\tilde T=p^{2i-j}h(\xx,\xx)\in \Herm_n(O_\kay)$. 
Suppose that $\tilde T$ is $\GL_n(\OK)$-equivalent to $ 
\diag(1_{n-2}, p^a, p^b)$  with $a+b$ odd.
Then
$$\degh(\ZZ_{i,j}(\xx)) = \log(p)\cdot \frac{\a_p'(S,\tilde T)}{\a_p 
(S,S)} .$$
\end{cor}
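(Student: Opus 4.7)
The plan is a direct comparison of two results already in place. By Theorem~\ref{theomult}, under the hypothesis that $\tilde T$ is $\GL_n(\OK)$-equivalent to $\diag(1_{n-2}, p^a, p^b)$ with $0\le a\le b$, we have
$$\widehat{\deg}(\ZZ_{i,j}(\xx)) = \log p \cdot \frac{1}{2}\sum_{\ell=0}^{a} p^\ell(a+b+1-2\ell).$$
By Proposition~\ref{dender}, the ratio $\alpha_p'(S,\tilde T)/\alpha_p(S,S)$ with $S = 1_n$ is the same sum $\frac{1}{2}\sum_{\ell=0}^{a} p^\ell(a+b+1-2\ell)$. Multiplying by $\log p$ gives the asserted identity.

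Two small compatibility checks are needed before one can apply each result verbatim. First, Proposition~\ref{dender} is stated for $0\le a<b$, while the corollary only assumes $a+b$ odd. Since $\diag(1_{n-2}, p^a, p^b)$ and $\diag(1_{n-2}, p^b, p^a)$ are $\GL_n(\OK)$-equivalent (via a permutation of the last two basis vectors), and both the intersection multiplicity and the representation densities depend only on the $\GL_n(\OK)$-equivalence class of $\tilde T$, we may relabel so that $a\le b$; the parity hypothesis then forces $a<b$, so Proposition~\ref{dender} applies. Second, one must confirm that $S=1_n$ is the correct gram matrix to use: this is consistent with the definition of the hermitian form $h$ on $\VV$ in Section~3, because the parity condition~(\ref{CoddT}) together with the basic case structure of $\X$ ensures that the ambient hermitian lattice is unimodular of rank $n$, so that $S=1_n$ represents the correct isometry class.

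There is no genuine obstacle at this final step: the arithmetic content has already been packaged into Theorem~\ref{theomult}, whose proof in turn relies on the decomposition of $\ZZ(y_i)$ into quasi-canonical divisors $\ZZ_s$ (Proposition~\ref{Zycompo}), Zink's equal-characteristic length computation (Proposition~\ref{length}), and the mixed-characteristic lifting result (Theorem~\ref{tlinearThm}). The number-theoretic content is encapsulated in Proposition~\ref{dender}, which the reader is presumably expected to verify by a direct manipulation of Hironaka's explicit formula for $F_p(S,T;X)$ followed by differentiation at $X=1$; that manipulation is combinatorial rather than conceptual, and is where I would expect the only real bookkeeping effort to lie, but it is entirely separate from the corollary itself, which is a one-line consequence of the two stated formulas.
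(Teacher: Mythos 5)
Your core argument is exactly the paper's: after Proposition~\ref{dender} is proved, the corollary is read off by comparing with Theorem~\ref{theomult}, and your bookkeeping about the ordering $a<b$ is a harmless sanity check. One thing worth flagging: your second ``compatibility check'' is not needed, and the justification you give for it is off. The ambient hermitian space $(C,h)$ is \emph{not} unimodular of rank $n$; since $h=p^{-1}\hgs{}{}$ and $\ord\det(C,\hgs{}{})\equiv n+1\pmod 2$, one has $\ord\det(C,h)$ odd for every $n$, so $C$ is never in the isometry class of $1_n$. That is precisely the point: $S=1_n$ is the form that \emph{cannot} represent $\tilde T$ (a Gram matrix coming from $C$), which is why $\a_p(S,\tilde T)=0$ and the derivative $\a_p'(S,\tilde T)$ is the quantity of interest. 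In any case the corollary is purely a comparison of two explicitly computed numbers, so no compatibility of Gram matrices needs to be verified; the proof reduces to the one-line observation you began with.
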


\begin{proof}[Proof of Proposition~\ref{dender}]
The first step is the following reduction formula, which is the  
hermitian analogue of
Corollary~5.6.1 in \cite{kitaoka}. For the convenience of the reader,  
we will sketch the proof below.
\begin{prop}\label{reduction}
Let $S'= 1_2$ and  $T'=\diag(p^a,p^b)$. Then
$$\a_p(S_r,T) = \a_p(S_r, 1_{n-2}) \, \a_p(S'_{r},T').$$
\end{prop}

It follows that
$$\a'_p(S,T) = \a_p(S, 1_{n-2})\,\a'_p(S',T').$$
By Hironaka's formula or the classical literature, \cite{shimura},
$$F_p(1_n,1_n; X) = \prod_{\ell=1}^{n}(1- (-1)^\ell p^{-\ell} X),$$
so that
$$\a_p(1_n,1_{n-2}) = \prod_{\ell=1}^{n-2}(1- (-1)^\ell p^{-\ell-2}).$$
On the other hand, Nagaoka, \cite{nagaoka}, proved the following in  
the binary case.
\begin{prop}[Nagaoka] Suppose that $S'=1_2$ and that $T'= \diag 
(p^a,p^b)$ with $0\le a\le b$, but with no
condition on the parity of $a+b$. Then
$$F_p(S',T';X) = (1+p^{-1}X)(1-p^{-2}X) \sum_{\ell=0}^a (pX)^\ell\bigg 
(\sum_{k=0}^{a+b-2\ell}(-X)^k\bigg).$$
\end{prop}
\begin{cor} If $a+b$ is odd, then
$$(1-p^{-2})^{-1}(1+p^{-1})^{-1}\a'_p(S',T') = \frac12\sum_{\ell=0}^ 
{a} p^\ell\ (a+b-2\ell+1).$$
\end{cor}

This completes the proof of Proposition~\ref{dender}.
\end{proof}

\begin{proof}[Proof of Proposition~\ref{reduction}]

The proof is just the hermitian version of the argument given by  
Kitaoka, \cite{kitaoka}, pp. 104--107.

First we pass to the lattice formulation in the standard way.
Viewing $S$  as the matrix of inner products $((v_i,v_j))$
for a basis $\vv=[v_1,\dots,v_m]$ of an $\OK$-lattice $M$ and $T$ as  
the matrix of inner products
$((u_i,u_j))$ for a basis $\uu=[u_1,\dots,u_n]$ for an $\OK$-lattice  
$L$, we have a bijection of $A_{p^r}(S,T)$
with the set
\begin{multline}
I_k(L,M) = \{\ \ph\in \Hom_{\OK}(L,M/p^kM)\mid \\
\nass
  (\ph(x),\ph(y))\equiv (x,y)\mod p^k, \ \forall x, y\in L\ \}.
  \end{multline}
Then
$$\a_p(S,T) = \a_p(L,M) = (p^{-k})^{n(2m-n)}|I_k(L,M)|,$$
for $k$ sufficiently large.

We need the following preliminary results.

\begin{lem} Suppose that $N\subset M$ is a regular\footnote{Here, following the terminology in 
\cite{kitaoka},  regular means that the restriction of the hermitian form 
to $N$ is non-degenerate.} sublattice with $ 
\OK$-basis $\{v_i\}$, so that 
$N=[v_1,\dots,v_r]$. Suppose that $w_i\in M$ is sufficiently close to  
$v_i$.
Then there is an isometry $\eta\in {\rm U}(M)$ with $\eta(N) = [w_1, 
\dots,w_r]$.
\end{lem}

\begin{lem} If $\ph:L\rightarrow M$ with $(\ph(x),\ph(y))\equiv (x,y) 
\mod p^k$, for some sufficiently large $k$, then there is an isometry $\eta: L \rightarrow \ph 
(L)\subset M$.
\end{lem}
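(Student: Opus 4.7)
The plan is to proceed by induction on the $\OK$-rank $n$ of $L$, following the hermitian analogue of Kitaoka's argument (Lemma~5.4.3 of \cite{kitaoka}) and using the preceding lemma as the key inductive tool. First, for $k$ larger than $\ord\det((v_i, v_j))$, the congruence hypothesis forces $\det((\ph(v_i), \ph(v_j)))$ to have the same $p$-adic valuation, so $\ph$ is injective and $\ph(L)\subset M$ is a regular $\OK$-sublattice of rank $n$; we may thus interpret $\ph: L\to \ph(L)$ as an $\OK$-linear isomorphism of hermitian lattices whose Gram matrices $T, T'$ agree modulo $p^k$.

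In the base case $n=1$, write $L = \OK v$ with $(v,v) = c$ and $(\ph(v), \ph(v)) = c'$ satisfying $c' \equiv c \pmod{p^k}$. Since $\kay/\Q_p$ is unramified, the norm map restricts to a surjection $1 + p^m\OK \twoheadrightarrow 1 + p^m\Z_p$ for every $m \geq 1$ (by Hensel, for $p$ odd), so for $k$ sufficiently large we can find $\alpha \in \OK^\times$ with $N_{\kay/\Q_p}(\alpha) = c/c'$. Then $\eta(v) := \alpha\ph(v)$ provides the desired isometry of $L$ onto $\ph(L)$.

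For the inductive step, use the Jordan decomposition of hermitian $\OK$-lattices (available for $p$ odd since $\kay/\Q_p$ is unramified) to write $L = \OK v_1 \perp L_0$ as an orthogonal direct sum with $(v_1, v_1)$ of minimal valuation among vectors in $L$. The base case applied to $\OK v_1$ yields $u_1 \in \ph(L)$ with $(u_1, u_1) = (v_1, v_1)$ exactly and $u_1$ close to $\ph(v_1)$; by the preceding lemma there exists $\eta_0 \in \text{\rm U}(M)$ carrying $\ph(v_1)$ to $u_1$, so after replacing $\ph$ by $\eta_0\circ\ph$ (and eventually composing the output with $\eta_0^{-1}$) we may assume $\ph(v_1) = u_1$ outright. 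Define $\ph_0: L_0 \to M$ by orthogonal projection,
\[
\ph_0(x) = \ph(x) - (\ph(x), u_1)(u_1, u_1)^{-1}u_1.
\]
Minimality of $\ord(v_1, v_1)$ together with $(\ph(v_1), \ph(x)) \equiv (v_1, x) = 0 \pmod{p^k}$ for $x\in L_0$ guarantees that $(\ph(x), u_1)/(u_1, u_1) \in \OK$, whence $\ph_0(L_0)\subset\ph(L)\cap u_1^\perp$, and a direct computation gives $(\ph_0(x), \ph_0(y)) \equiv (x,y) \pmod{p^{k'}}$ with $k'$ equal to $k$ minus a constant depending only on $\ord(v_1,v_1)$. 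A short check (using the decomposition of $\ph(x)$ as $\ph_0(x)$ plus an $\OK$-multiple of $u_1$) shows $\ph(L) = \OK u_1 \perp \ph_0(L_0)$, so the induction hypothesis applied to $\ph_0$ furnishes an isometry $\eta_1: L_0 \to \ph_0(L_0)$. Combining $v_1 \mapsto u_1$ with $\eta_1$ yields the required $\eta : L \to \ph(L)$.

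The main obstacle is the $p$-adic precision bookkeeping: one must choose $k$ initially on the order of twice the $p$-adic length of a Jordan decomposition of $L$ to absorb the fixed loss at each reduction step (the drop from $k$ to $k'$ in the projection, plus the loss incurred in passing from the approximation $\ph(v_1)$ to the exact vector $u_1$ via the preceding lemma). The case $p=2$, which is excluded from parts of the preceding local theory, would require a refinement using rank-$2$ hyperbolic Jordan blocks in place of rank-$1$ splittings, but is not needed here.
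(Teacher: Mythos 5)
The paper does not actually prove this lemma: it appears as the second of three ``preliminary results'' listed without proof inside the proof of Proposition~\ref{reduction}, with the whole argument dismissed as ``just the hermitian version of the argument given by Kitaoka, pp.~104--107.'' Your proposal supplies a proof following exactly that Kitaoka blueprint (induction on rank, splitting off a rank-one Jordan summand, orthogonal projection, with the base case handled by surjectivity of the norm for the unramified extension), so it carries out the approach the paper points to but leaves implicit. The overall structure is sound, and your remark that $k$ must be chosen relative to the $p$-adic length of a Jordan decomposition of $L$ is precisely what the paper's phrase ``for $k$ sufficiently large'' hides.

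One small wrinkle worth flagging: you invoke the preceding lemma to produce $\eta_0\in\UU(M)$ ``carrying $\ph(v_1)$ to $u_1$,'' but as stated that lemma matches the rank-$r$ sublattices $[v_1,\dots,v_r]$ and $[w_1,\dots,w_r]$, not a chosen generator to a chosen generator (and it cannot match prescribed vectors, since those vectors need not have the same norm, only congruent norms). In fact you do not need $\eta_0$ at all at this point: your base case already yields $u_1=\alpha\ph(v_1)\in\ph(L)$ with $\alpha\in\OK^\times$ and $(u_1,u_1)=(v_1,v_1)$ exactly, and the orthogonal projection $\ph_0$ and the splitting $\ph(L)=\OK u_1\perp\ph_0(L_0)$ go through directly with this $u_1$ (the estimates you quote give $(\ph(x),u_1)(u_1,u_1)^{-1}\in\OK$ once $k>\ord(v_1,v_1)$). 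The preceding approximation lemma is genuinely needed only for the \emph{third} preliminary lemma (comparing $\ph_1(L)$ and $\ph_2(L)$ inside $M$), not here. With that cosmetic adjustment, your argument is the intended one.
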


\begin{lem} Suppose that $\ph_1$ and $\ph_2$ are two homomorphisms  
satisfying the
conditions of the previous lemma. Also suppose that
$\ph_1\equiv \ph_2\mod p^k$, for some sufficiently large $k$.
Then there is an isometry $\gamma\in {\rm U}(M)$
such that $\ph_2(L)=\gamma(\ph_1(L))$.
\end{lem}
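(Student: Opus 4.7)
The plan is to deduce this third lemma directly by chaining the previous two, treating $\ph_1(L)$ as the ``reference'' regular sublattice and viewing $\ph_2$ as a small perturbation of $\ph_1$.

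\textbf{Setup.} Fix the $\OK$-basis $\uu=[u_1,\dots,u_n]$ of $L$ used to define the Gram matrix $T$. Since $T$ is nonsingular, $L$ is a regular hermitian $\OK$-lattice. Applying the second lemma to $\ph_1$, there is an isometry $\eta\colon L\isoarrow \ph_1(L)\subset M$; in particular $\ph_1(L)$ is a regular sublattice of $M$ and $v_i:=\ph_1(u_i)$ form an $\OK$-basis of it with Gram matrix $((v_i,v_j))\in\Herm_n(\OK)$ $\GL_n(\OK)$-equivalent to $T$.

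\textbf{Main step.} Set $w_i:=\ph_2(u_i)$ for $i=1,\dots,n$. The hypothesis $\ph_1\equiv\ph_2\bmod p^k$ means precisely that $w_i\equiv v_i\bmod p^kM$ for every $i$. If $k$ is chosen large enough (depending only on the discriminant of $\ph_1(L)$ in $M$, exactly as in the first lemma), then the first lemma, applied to the regular sublattice $N=\ph_1(L)=[v_1,\dots,v_n]$ and the tuple $(w_1,\dots,w_n)$, produces an isometry $\gamma\in{\rm U}(M)$ with
\[
\gamma(\ph_1(L))=\gamma\bigl([v_1,\dots,v_n]\bigr)=[w_1,\dots,w_n]=\ph_2(L).
\]

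\textbf{Obstacle.} There is nothing substantive left to prove: everything reduces to the two preceding lemmas, which are the hermitian analogues of Kitaoka's classical statements (\cite{kitaoka}, pp.~104--107). The only subtle point is making the lower bound on $k$ uniform, but this is already incorporated in the phrase ``for sufficiently large $k$'' in the previous lemma, since the needed bound depends only on $\ph_1(L)\subset M$, which is fixed once we apply the second lemma to $\ph_1$. Hence no new ingredient is required and the conclusion follows as above.
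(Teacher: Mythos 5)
Your chain argument is correct and is exactly the intended reasoning: the paper states this lemma without proof (deferring to the Hermitian analogue of Kitaoka, pp.~104--107), and the way to deduce it is indeed to apply the second lemma to $\ph_1$ (to get that $N:=\ph_1(L)$ is a regular sublattice with $\OK$-basis $v_i=\ph_1(u_i)$) and then the first lemma to $N$ with the perturbed vectors $w_i:=\ph_2(u_i)$, which are close to $v_i$ because $\ph_1\equiv\ph_2\bmod p^k$.

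Two small remarks to sharpen the write-up. First, the invocation of the second lemma is actually stronger than needed for the main step: to apply the first lemma you only need that $\ph_1(L)$ is a \emph{regular} sublattice and that the $v_i$ form a basis, and both follow directly from $\ph_1\in I_k(L,M)$ for $k$ large (the Gram matrix $(\ph_1(u_i),\ph_1(u_j))$ is congruent to $T$ mod $p^k$, hence nonsingular), without going through the intermediate isometry $\eta\colon L\isoarrow\ph_1(L)$. Second, on the uniformity point: the threshold $t$ in the first lemma depends a priori on the sublattice $N=\ph_1(L)\subset M$, which varies with $\ph_1$, so for the ``sufficiently large $k$'' in the statement to be uniform in $(\ph_1,\ph_2)$ one should note that for $k$ large all such $N$ have Gram matrix $\GL_n(\OK)$-equivalent to $T$, and the bound in the first lemma depends only on that equivalence class together with the position of $N$ in $M$, which is similarly pinned down; you gesture at this, and it is the one genuinely nontrivial point, but it does go through.
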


For given $L$ and $M$ and a sublattice $N\subset M$ such that $N$ is  
isometric to $L$, we 
let 
$$\tilde{I}_k(L,M) = \{\ \ph\in \Hom_{\OK}(L,M)\mid (\ph(x),\ph(y))\equiv (x,y) 
\mod p^k, \ \forall x, \ y \in L\ \},$$
and define
$$\tilde{I}_k(L,M;N) = \{\ \ph\in \tilde{I}_k(L,M)\mid \exists\, \eta\in {\rm U} 
(M)\ \text{with}\ \ph(L) = \eta(N)\ \}$$
and
$$I_k(L,M;N) = \{\ \ph\in I_k(L,M)\mid\exists\,  \eta\in {\rm U}(M)\ \text{with} 
\ \tilde\ph(L) = \eta(N)\ \}.$$
In this last set $\tilde\ph\in \tilde{I}_k(L,M)$ is a preimage of $\ph$.
By the preliminary lemmas, these
sets are well defined for $k$ sufficiently large.

\begin{prop} Suppose that $L= L_1\perp L_2$ with $L_j$ of rank $n_j$.
Let $\{N_i\}$ be a set of representatives for the ${\rm U}(M)$-orbits  
in the set of
all sublattices $N\subset M$ such that $N$ is isometric to $L_1$. Then
\begin{multline}
|I_k(L,M)|= \sum_i |I_k(L_1,M;N_i)|\\
\nass
\times|\{\ph_2\in I_k(L_2,M)\mid (\ph_2(L_2),N_i)\equiv 0\mod p^k\,\}|.
\end{multline}
\end{prop}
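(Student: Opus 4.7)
The plan is to partition $I_k(L,M)$ according to the $\mathrm{U}(M)$-orbit of the image $\tilde\ph_1(L_1)$ of a lift of the restriction $\ph|_{L_1}$, and then exploit orthogonality of $L_1$ and $L_2$ to decouple the count of compatible restrictions $\ph_2:=\ph|_{L_2}$. First I would note that for any $\ph\in I_k(L,M)$, the restrictions $\ph_j:=\ph|_{L_j}\in I_k(L_j,M)$ automatically satisfy the cross condition $(\ph_1(x_1),\ph_2(x_2))\equiv 0\bmod p^k$ for $x_j\in L_j$, since $(x_1,x_2)=0$ in $L$; conversely, any such pair $(\ph_1,\ph_2)$ assembles to a unique $\ph\in I_k(L,M)$. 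Hence the problem reduces to counting such compatible pairs.

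Next, for $k$ sufficiently large the second preliminary lemma lifts each $\ph_1\in I_k(L_1,M)$ to an isometric embedding $\tilde\ph_1:L_1\hookrightarrow M$, and the third preliminary lemma guarantees that the sublattice $\tilde\ph_1(L_1)$, viewed as an element of the set of $\mathrm{U}(M)$-orbits of sublattices of $M$ isometric to $L_1$, depends only on $\ph_1$. Since the $N_i$ enumerate these orbits, this yields a disjoint decomposition
\begin{equation*}
I_k(L,M)=\bigsqcup_i \{\ph\in I_k(L,M) \mid \ph_1\in I_k(L_1,M;N_i)\}.
\end{equation*}
For fixed $i$ and $\ph_1\in I_k(L_1,M;N_i)$, I would choose some $\eta\in\mathrm{U}(M)$ with $\tilde\ph_1(L_1)=\eta(N_i)$; the cross condition on $\ph_2$ then becomes $(\ph_2(L_2),\eta(N_i))\equiv 0\bmod p^k$, and composition with $\eta^{-1}$ (acting as an isometry on $M/p^kM$) furnishes a bijection
\begin{equation*}
\{\ph_2\in I_k(L_2,M) \mid (\ph_2(L_2),\tilde\ph_1(L_1))\equiv 0\}\isoarrow \{\ph'_2\in I_k(L_2,M) \mid (\ph'_2(L_2),N_i)\equiv 0\}.
\end{equation*}
Multiplying by $|I_k(L_1,M;N_i)|$ and summing over $i$ gives the stated identity.

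The main subtlety, which I would check carefully, is the well-definedness of the second bijection with respect to the choices of the lift $\tilde\ph_1$ and the isometry $\eta$: any alternative $\eta'$ with $\eta'(N_i)=\tilde\ph_1(L_1)$ differs from $\eta$ by an element of the $\mathrm{U}(M)$-stabilizer of $N_i$, which merely permutes the target set without changing its cardinality; and different choices of lift $\tilde\ph_1$ differ by an isometry of $\tilde\ph_1(L_1)$, which can be absorbed into $\eta$. Once this is verified, the argument is a direct hermitian transcription of Kitaoka's proof on pp.~104--107 of \cite{kitaoka}, with the three preliminary lemmas supplying exactly the hermitian replacements for the ingredients Kitaoka uses in the quadratic setting.
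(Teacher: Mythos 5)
Your argument is correct and follows essentially the same route as the paper's proof: restrict to $L_1$ and $L_2$, lift $\ph|_{L_1}$ to an isometric embedding, sort by the ${\rm U}(M)$-orbit of its image, and transport the orthogonality condition on $\ph|_{L_2}$ to the fixed representative $N_i$ by an isometry of $M$. The paper packages this by sending $\ph\mapsto(\ph|_{L_1},\gamma(\tilde\ph_1)\circ\ph|_{L_2})$ rather than by a fiberwise bijection, but this is only a cosmetic difference.
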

\begin{proof}
First note that, for $k$ sufficiently large,
for any $\ph_1\in \tilde{I}_k(L_1,M)$, $\ph_1(L_1)$ is isometric to  
$L_1$.
For each $\ph_1\in \tilde{I}_k(L_1,M)$ choose an isometry $\gamma= 
\gamma(\ph_1)\in {\rm U}(M)$
such that $N_i = \gamma (\ph_1(L_1))$, for some $i$.
Also, for each $\ph_1\in I_k(L_1,M)$, choose a preimage $\tilde\ph_1 
\in \tilde{I}_k(L_1,M)$.
There is then a bijection
$$I_k(L,M) \isoarrow \coprod_i I_k(L_1,M;N_i)\times
\{\ph_2\in I_k(L_2,M)\mid (\ph_2(L_2),N_i) \equiv 0\mod p^k\}$$
given by $\ph \mapsto (\ph_1, \ph_2)$ with $\ph_1=\ph\vert_{L_1}$
and $\ph_2= \gamma(\tilde\ph_1)\circ \ph\vert_{L_2}$.
\end{proof}

\begin{lem}
\begin{multline*}
|\{\ph_2\in I_k(L_2,M)\mid (\ph_2(L_2),N_i)\equiv 0\mod p^k\ \}|\\
\nass
= |N_i^\vee:N_i|^{n_2}|M:N_i\perp N_i^\perp|^{-n_2} |I_k(L_2,N_i^ 
\perp)|.
\end{multline*}
\end{lem}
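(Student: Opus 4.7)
The plan is to adapt Kitaoka's argument for the quadratic case, \cite{kitaoka}, pp.~104--107, to the hermitian setting. Denote by $\mathcal A$ the set on the left-hand side of the claimed identity, and take $k$ large enough that $p^k$ annihilates both $N_i^\vee/N_i$ and $(N_i^\perp)^\vee/N_i^\perp$; this is legitimate since the representation densities are defined as a limit in $k$. Using the orthogonal decomposition $M\otimes\kay=(N_i\otimes\kay)\oplus(N_i^\perp\otimes\kay)$ with projections $\pi_1,\pi_2$, write any lift $\tilde\varphi_2:L_2\to M$ of $\varphi_2\in\mathcal A$ as $\tilde\varphi_2=u+w$ with $u=\pi_1\tilde\varphi_2\in N_i^\vee$ and $w=\pi_2\tilde\varphi_2\in(N_i^\perp)^\vee$. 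The orthogonality hypothesis $(\tilde\varphi_2(L_2),N_i)\equiv 0\bmod p^k$ is equivalent to $u(L_2)\subset p^kN_i^\vee$, which by the choice of $k$ lies in $N_i$; then $w(x)=\tilde\varphi_2(x)-u(x)\in M\cap(N_i^\perp\otimes\kay)=N_i^\perp$ by the very definition of $N_i^\perp$, so $\tilde\varphi_2$ factors through $M':=N_i\perp N_i^\perp$.

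Under this decomposition, the isometry condition becomes $(u(x),u(y))+(w(x),w(y))\equiv(x,y)\bmod p^k$; since $u(x),u(y)\in p^kN_i^\vee$, the first term lies in $p^{2k}\OK\subset p^k\OK$ and may be dropped. The condition thus reduces to $(w(x),w(y))\equiv(x,y)\bmod p^k$, so that the class of $w$ modulo $p^kN_i^\perp$ yields an element $[w]\in I_k(L_2,N_i^\perp)$. The assignment $\varphi_2\mapsto[w]$ then defines a surjection $\mathcal A\to I_k(L_2,N_i^\perp)$; surjectivity is immediate by setting $\tilde\varphi_2=0+w$.

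The remaining work is the fiber count over a fixed $[w]$. Two lifts $(u,w)$ and $(u',w')$ determine the same $\varphi_2\in\Hom(L_2,M/p^kM)$ iff their difference lies in $\Hom(L_2,p^kM)$; requiring additionally that $[w]=[w']$ forces $u-u'$ to range over $\pi_1(\Hom(L_2,p^kM))=\Hom(L_2,p^kN_i^\ast)$, where $N_i^\ast:=\pi_1(M)\subset N_i^\vee$. The projection $\pi_1$ induces an injection $M/M'\hookrightarrow N_i^\vee/N_i$ (its kernel consists of classes of elements of $M\cap(N_i^\perp\otimes\kay)=N_i^\perp$, already trivial in $M/M'$) with image $N_i^\ast/N_i$, so $[N_i^\ast:N_i]=[M:M']$ and hence $[N_i^\vee:N_i^\ast]=|N_i^\vee:N_i|\cdot|M:M'|^{-1}$. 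The fiber size over $[w]$ is therefore the number of $u:L_2\to p^kN_i^\vee$ modulo $\Hom(L_2,p^kN_i^\ast)$, which equals $[N_i^\vee:N_i^\ast]^{n_2}=|N_i^\vee:N_i|^{n_2}|M:M'|^{-n_2}$, giving the claimed formula. The main obstacle is this last bookkeeping step, especially verifying that the overcounting from the ambiguity in $\tilde\varphi_2$ factors precisely as described and that the identifications are independent of $k$ for $k$ sufficiently large.
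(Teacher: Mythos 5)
Your argument follows the same route as the paper's proof (both are the hermitian transcription of Kitaoka): a qualifying $\ph_2$ lifts into $p^kN_i^\vee\perp N_i^\perp$, the isometry condition only constrains the $N_i^\perp$-component, and the rest is lattice-index bookkeeping. Your identity $[\pi_1(M):N_i]=[M:N_i\perp N_i^\perp]$ is correct and usefully makes explicit the step the paper dismisses with ``once the various lattice indices are taken into account''; the final count is the right one.

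However, one step fails as written. The assignment $\ph_2\mapsto [w]$ is not well defined: two lifts of the same $\ph_2$ differ by an arbitrary element of $\Hom(L_2,p^kM)$, so $w$ is only determined modulo $\Hom(L_2,p^k\pi_2(M))$, and $\pi_2(M)\subset N_i^\perp$ holds if and only if $\pi_1(M)\subset M$. This can fail: already for $N_i=\OK v$ with $(v,v)=p$ inside a unimodular $M$ containing $w$ with $(w,v)=1$, one has $\pi_1(w)=p^{-1}v\notin M$. Consequently ``the fiber over $[w]$'' is not defined, and what your fiber count actually computes is the number of homomorphisms of $L_2$ into $(p^kN_i^\vee\perp N_i^\perp)/(p^k\pi_1(M)\perp p^kN_i^\perp)$ subject to the isometry condition --- a quotient by a lattice that in general neither contains nor is contained in $p^kM$, so it is not a priori the same counting problem as the left-hand side. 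The repair is precisely the paper's diagonalization step: compare both quotients to the common refinement $p^{k+a}N_i^\vee\perp p^kN_i^\perp\subset p^kM$ (with $a$ chosen so that $p^aN_i^\vee\subset N_i$), noting that the isometry condition is insensitive to all the ambiguities involved; your index identity then shows that $p^kM$ and $p^k\pi_1(M)\perp p^kN_i^\perp$ have equal index in $p^kN_i^\vee\perp N_i^\perp$, so the two counts agree and your formula follows. (A minor further point: $(u(x),u(y))$ lies in $p^{2k}(N_i^\vee,N_i^\vee)\subset p^k\OK$, not in $p^{2k}\OK$ as claimed; the conclusion you draw from it is unaffected.)
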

\begin{proof}
As in Kitaoka,
$$\{x\in M\mid (x,N_i)\equiv 0\mod p^k\} = p^kN_i^\vee\perp N_i^\perp.$$
where $N_i^\perp = (\kay N_i)^\perp\cap N$, provided $p^kN_i^\vee 
\subset N_i$.
Thus
\begin{multline}
\{\ph_2\in I_k(L_2,M)\mid (\ph_2(L_2),N_i) \equiv 0\mod p^k\}\\
\nass
  =
\{\ph_2: L_2\rightarrow p^kN_i^\vee\perp N_i^\perp \mod p^k M \mid  
(\ph_2(x),\ph_2(y))\equiv (x,y)\mod p^k\,\}.
\end{multline}
Next, we replace $p^kM$ by $p^k(p^aN_i^\vee\perp N_i^\perp)$, so that  
the cosets diagonalize,
i.e., we consider the set
\begin{multline}\label{tempph2}
\{\ph_2: L_2\rightarrow p^kN_i^\vee\perp N_i^\perp \mod p^k(p^aN_i^ 
\vee\perp N_i^\perp)\mid \\
\nass
(\ph_2(x),\ph_2(y))\equiv (x,y)\mod p^k\, \}.
\end{multline}
Write $\ph_2=\psi_1+\psi_2$ with $\psi_1:L_2\rightarrow p^kN_i^\vee/p^ 
{k+a}N_i^\vee$ and
$\psi_2: L_2\rightarrow N_i^\perp/p^kN_i^\perp$. Since we are  
assuming that
$p^kN_i^\vee\subset N_i$,  we have
$(\psi_1(x),\psi_1(y))\in (N_i,p^kN_i^\vee)\subset p^k\OK$. Thus the  
condition on $\ph_2$ in
(\ref{tempph2}) just amounts to the condition
$(\psi_2(x),\psi_2(y))\equiv (x,y)\mod p^k$, with no restriction on $ 
\psi_1\in\Hom_{\OK}(L_2, p^kN_i^\vee/p^{k+a}N_i^\vee)$.
This yields the claimed expression, once the various lattice indices  
are taken into account.
\end{proof}

\begin{cor}  With the notation of the previous proposition,
$$\a_p(L,M) = \sum_i |M:N_i\perp N_i^\perp|^{-n_2}|N_i^\vee:N_i|^{n_2} 
\,\a_p(L_1,M;N_i)\,\a_p(L_2,N_i^\perp).$$
\end{cor}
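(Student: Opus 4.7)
The plan is to derive this corollary by normalizing the identity supplied by the preceding proposition and lemma, then taking the limit $k \to \infty$. Combining them gives the combinatorial identity
\begin{equation*}
|I_k(L,M)| = \sum_i |N_i^\vee:N_i|^{n_2}\,|M:N_i\perp N_i^\perp|^{-n_2}\,|I_k(L_1,M;N_i)|\,|I_k(L_2,N_i^\perp)|,
\end{equation*}
valid for all $k$ sufficiently large (large enough that the preliminary lemmas apply to all three lattice-pair counts, and large enough that each of the three densities $\a_p(L,M)$, $\a_p(L_1,M;N_i)$, $\a_p(L_2,N_i^\perp)$ has stabilized in its definition as $(p^{-k})^{?}\cdot|I_k|$).

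Next I would multiply both sides by $(p^{-k})^{n(2m-n)}$, the normalizing factor entering $\a_p(L,M)$. The main bookkeeping step is to split this exponent so that one factor is exactly the normalization for $\a_p(L_1, M; N_i)$ and the other is exactly the normalization for $\a_p(L_2, N_i^\perp)$. Since $N_i^\perp$ has rank $m - n_1$ and $L_2$ has rank $n_2$, and since $n = n_1 + n_2$, a direct expansion yields
\begin{equation*}
n(2m-n) \;=\; n_1(2m - n_1) \;+\; n_2\bigl(2(m-n_1) - n_2\bigr),
\end{equation*}
which is precisely the desired split. Writing $\a_p(L_1,M;N_i) = (p^{-k})^{n_1(2m-n_1)}|I_k(L_1,M;N_i)|$ and $\a_p(L_2,N_i^\perp) = (p^{-k})^{n_2(2(m-n_1)-n_2)}|I_k(L_2,N_i^\perp)|$ for $k$ large, one sees that the combinatorial identity multiplied by $(p^{-k})^{n(2m-n)}$ becomes, term by term,
\begin{equation*}
\a_p(L,M) = \sum_i |N_i^\vee:N_i|^{n_2}\,|M:N_i\perp N_i^\perp|^{-n_2}\,\a_p(L_1,M;N_i)\,\a_p(L_2,N_i^\perp),
\end{equation*}
as claimed.

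The hard part is really already encoded in the three preliminary lemmas that justify the orbit decomposition and the use of the set $I_k(L_1,M;N_i)$: one must know that for $k$ large the fibers over sublattices $N_i$ in the chosen orbit representatives are independent of the choice of representative, so that $\a_p(L_1,M;N_i)$ is well-defined. Given these, the only work specific to this corollary is the exponent computation above and the check that the sum over $i$ is finite (each orbit contributes, and only finitely many orbits of $N \subset M$ isometric to $L_1$ meet the support condition for large $k$, since $|M:N_i\perp N_i^\perp|$ must be bounded for the intersection with the $\phi_2$-support to be non-trivial). Taking $k \to \infty$ then completes the proof. This is the hermitian analogue, with straightforward modifications, of the reduction formula for quadratic forms in Kitaoka, Corollary~5.6.1.
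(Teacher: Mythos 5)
Your proof is correct and follows the same route the paper intends: the corollary is stated without proof and is just the combination of the preceding proposition and lemma, renormalized by $(p^{-k})^{n(2m-n)}$ with the exponent split $n(2m-n)=n_1(2m-n_1)+n_2(2(m-n_1)-n_2)$ matching the respective rank data of $(L_1,M)$ and $(L_2,N_i^\perp)$. The finiteness remark and the reliance on the preliminary lemmas for well-definedness of the orbit counts are exactly the right supporting observations.
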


Now suppose that $L_1$ is unimodular, so that any $N\subset M$  
isometric to $L_1$ is unimodular.
Then, for any such $N$, $M=N\perp N^\perp$. Moreover, since $\kay$ is  
unramified, if
$$M= N_1\perp N_1^\perp = N_2\perp N_2^\perp$$
are two such decompositions, then $N_1^\perp \simeq N_2^\perp$. Thus $ 
{\rm U}(M)$ acts transitively
on such $N$'s.

\begin{cor}  With the notation of the previous proposition, suppose  
that $L_1$ is unimodular. Then
$$\a_p(L,M) = \a_p(L_1,M)\,\a_p(L_2,N^\perp).$$
\end{cor}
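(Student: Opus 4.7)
The plan is to read off the corollary directly from the more general formula
$$\a_p(L,M) = \sum_i |M:N_i\perp N_i^\perp|^{-n_2}|N_i^\vee:N_i|^{n_2} \,\a_p(L_1,M;N_i)\,\a_p(L_2,N_i^\perp)$$
of the preceding corollary, exploiting the three simplifications that unimodularity of $L_1$ forces.

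First, as noted in the paragraph just before the statement, any $\OK$-sublattice $N\subset M$ isometric to the unimodular lattice $L_1$ is itself unimodular, so $N^\vee=N$, whence $|N^\vee:N|=1$; moreover $M=N\perp N^\perp$ since the restriction of the hermitian form to a unimodular sublattice is nondegenerate and splits the form, giving $|M:N\perp N^\perp|=1$. Thus every factor $|M:N_i\perp N_i^\perp|^{-n_2}|N_i^\vee:N_i|^{n_2}$ in the sum is $1$.

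Second, again by the discussion preceding the statement, the unramifiedness of $\kay$ implies that the isometry class of the orthogonal complement $N^\perp$ inside $M$ depends only on that of $N$ (any two unimodular complements are isometric), and $\mathrm{U}(M)$ acts transitively on the set of sublattices $N\subset M$ isometric to $L_1$. Hence there is only one $\mathrm U(M)$-orbit, and we may take the index set $\{i\}$ to be a singleton $\{N\}$. In particular $\a_p(L_2,N_i^\perp)=\a_p(L_2,N^\perp)$ for the unique representative.

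Finally, I would check that $\a_p(L_1,M;N)=\a_p(L_1,M)$: the sets $I_k(L_1,M;N)$ as $N$ ranges over orbit representatives partition $I_k(L_1,M)$ (this is essentially the preliminary lemma saying that for $k$ large every $\phi\in \tilde I_k(L_1,M)$ has image isometric to $L_1$, together with the definition of $I_k(L_1,M;N)$ via an isometry $\eta\in \mathrm U(M)$). Since there is only one orbit, this partition has a single block, and the identity follows. Substituting these three simplifications into the formula of the previous corollary yields
$$\a_p(L,M) = \a_p(L_1,M)\,\a_p(L_2,N^\perp),$$
as required. No step is a genuine obstacle; the content is entirely in the preceding corollary and in the transitivity statement, which the paper has already established.
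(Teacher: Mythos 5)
Your proof is correct and follows the same route the paper takes: the paper's own argument consists precisely of the two observations in the paragraph preceding the corollary (unimodularity of $N\simeq L_1$ gives $N^\vee=N$ and $M=N\perp N^\perp$, killing the index factors, and $\mathrm{U}(M)$-transitivity collapses the sum to one term with $\a_p(L_1,M;N)=\a_p(L_1,M)$), after which the corollary is stated without further comment. You have simply made explicit the single-orbit/single-term bookkeeping that the paper leaves implicit.
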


This proves Proposition~\ref{reduction}. \end{proof}

{\bf Correction to \cite{KRYbook} and \cite{rapo}.}

We take this occasion to close a gap in \cite{KRYbook}, where we  
inadvertently
omitted the proof of Lemma 7.7.3. This lemma is  also 
implicitly used in
\cite{rapo} (the equality of divisors right after Lemma 3.1). We  
formulate here the lemma and give the proof. 
\begin{lem}
Let $G$ be the formal $p$-divisible group of dimension $1$ and height  
$2$ over
$\FF$. Let $\mathcal M=\Spec W [[t]]$ be the universal deformation  
space of $G$. Let
$\varphi\in\End (G)$ be an endomorphism which generates an order of  
conductor $c$
in a quadratic extension $\kay$ of $\Q_p$. Let $\mathcal T =\mathcal  
T (\varphi)$ be the deformation
locus of $\varphi$ (a relative divisor on $\mathcal M$, by \cite 
{rapo}, Prop. 1.4).
Then there is an equality of divisors on $\mathcal M$,
\begin{equation*}
\mathcal T = \sum_{s=0}^{c}\  \mathcal W_s (\varphi)\ ,
\end{equation*}
where $\mathcal W_s(\varphi)$ denotes the quasi-canonical divisor of  
level $s$ (relative
to $\kay$).
\end{lem}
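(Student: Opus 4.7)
My plan is to prove the equality of divisors by first establishing an inclusion of effective relative divisors on $\mathcal M = \Spf W[[t]]$ and then matching their intersection numbers with the special fiber $\mathcal M_p = \Spec \FF[[t]]$. For the easy inclusion, since $\varphi$ generates the order $O_{\kay,c} = \Z_p + p^c\OK$ of conductor $c$, the chain $O_{\kay,c} \subset O_{\kay,s}$ for every $0 \le s \le c$ shows that $\varphi$ lies in the endomorphism ring $O_{\kay,s}$ of any quasi-canonical lift $F_s$ of level $s$ (via the induced embedding into $O_D$, noting that $O_{\kay,s}$ is $\sigma$-stable so the flavor of embedding is immaterial here). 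Hence $\varphi$ extends to $F_s$, so the closed immersion $\Spf W_s \hookrightarrow \mathcal M$ cutting out $\mathcal W_s(\varphi)$ factors through $\mathcal T$, yielding
\begin{equation*}
\sum_{s=0}^{c}\mathcal W_s(\varphi) \ \leq \ \mathcal T.
\end{equation*}

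To upgrade this inequality to equality I will compare intersections with $\mathcal M_p$. On the left, each $\mathcal W_s$ is cut out by a Weierstrass polynomial of $t$-degree $e_s = [W_s:W]$ (since $W_s/W$ is totally ramified), so $\mathcal W_s\cdot\mathcal M_p = e_s$ and summing gives $\sum_{s=0}^{c}e_s$. For the right, Weierstrass preparation identifies $\mathcal T\cdot\mathcal M_p$ with the total degree of $\mathcal T$ on the rigid generic fiber of $\mathcal M$, and I compute that generic degree by classifying its geometric points. A lift $X$ of $G$ to a finite extension of $M = W_\Q$ admitting $\varphi$ as an endomorphism must have $\End^0(X)\supset \kay = \Q_p(\varphi)$; since $X$ cannot be supersingular in characteristic zero, $\End^0(X) = \kay$, so $\End(X) = O_{\kay,t}$ is an order in $\kay$ containing $O_{\kay,c}$, forcing $0 \le t \le c$. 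By Gross's classification, such an $X$ is a quasi-canonical lift of level $t$, and these form a Galois orbit of size $e_t$. Summing over $t$ yields $\mathcal T\cdot\mathcal M_p = \sum_{t=0}^{c}e_t$, matching the LHS.

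With the two intersection numbers equal and the inclusion from the first paragraph in hand, the effective relative divisor $D := \mathcal T - \sum_{s=0}^{c}\mathcal W_s(\varphi) \geq 0$ satisfies $D\cdot\mathcal M_p = 0$. Writing $D$ as the vanishing locus of some $f \in W[[t]]$ with $p \nmid f$, we have $D\cdot\mathcal M_p = \ord_t(\bar f)$, which vanishes iff $f$ is a unit, i.e.~$D = 0$. The main obstacle is the generic-fiber computation in the middle paragraph, resting on the Serre--Tate/Gross classification of liftings of $G$ with prescribed endomorphism order; the "proposition in \cite{rapo} which comes after the passage in question" supplies this ingredient, and the authors' warning concerns checking that that proposition is proved independently of the present lemma.
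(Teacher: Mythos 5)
The overall skeleton of your argument — prove $\sum_{s=0}^{c}\mathcal W_s(\varphi)\le\mathcal T$, then match intersection numbers with the special fiber $\mathcal M_p$ to upgrade to equality — is exactly the paper's strategy, and your inclusion and LHS computation (each $\mathcal W_s$ is cut out by a distinguished polynomial of degree $e_s$) are fine. The divergence, and the gap, is in how you compute $\mathcal T\cdot\mathcal M_p$. You identify $\mathcal T\cdot\mathcal M_p$ with the total degree of the Weierstrass polynomial $P$ of a local equation $f$ for $\mathcal T$, and then evaluate that degree by \emph{counting geometric points} of $\mathcal T$ in the rigid generic fiber and invoking Gross's classification. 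But $\deg P$ is the number of zeros of $P$ \emph{counted with multiplicity}; Gross's classification only tells you the underlying set of points (the quasi-canonical lifts of level $\le c$, in Galois orbits of size $e_t$). You have not shown that each such point is a simple zero of $f$, i.e.\ that $\mathcal T$ is reduced along the generic fiber. Without that, you only get $\mathcal T\cdot\mathcal M_p\ge\sum_{t=0}^{c}e_t$, which is already immediate from the inclusion, and the argument stalls.

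Moreover, this reducedness is not a harmless technicality — it is essentially the content of the lemma. After the inclusion, the assertion $\mathcal T=\sum_s\mathcal W_s$ is \emph{precisely} the statement that each $\mathcal W_s$ occurs in $\mathcal T$ with multiplicity one, so asserting simple zeros without argument is close to circular. The paper sidesteps this entirely: instead of analyzing the generic fiber of $\mathcal T$, it computes $\mathcal T\cdot\mathcal M_p$ directly as an intersection multiplicity at the supersingular closed point, using the explicit formula of \cite{rapo}, Prop.\ 1.8 (applied with $\psi_1$ an isomorphism and ${}^t\psi_1\circ\psi_2=\varphi$, normalized so that $\psi_2$ has maximal valuation in its class mod $\Z_p\psi_1$). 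That formula depends only on $v(\varphi)$, requires no a priori knowledge of the generic-fiber structure of $\mathcal T$, and produces the same numbers $2\sum_{i<c}p^i+p^c$ (unramified) or $2\sum_{i\le c}p^i$ (ramified) as your LHS. To repair your route you would need an independent first-order deformation argument (a tangent-space computation at each quasi-canonical point) showing $\varphi$ does not deform to first order off the quasi-canonical locus; since you do not supply one, the proposal as written does not close.
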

\begin{proof}
All quasi-canonical divisors $\mathcal W_s (\varphi)$ are prime  
divisors which are
pairwise distinct and with $\mathcal W_s (\varphi) \subset\mathcal T$  
for $0\leq s\leq c$.
Hence we have an inequality of relative divisors
\begin{equation*}
\sum_{s=0}^{c}\  \mathcal W_s (\varphi)\leq\mathcal T\ .
\end{equation*}
In order to show equality here, it suffices to compare the intersection
multiplicities with the special fiber $\mathcal M_p = \Spec\FF [[t]] 
$. For the LHS,
this is equal to
\begin{equation}\label{altern}
\sum_{s=0}^{c} e_s = \
\begin{cases}\
2\cdot\sum_{i=0}^{c-1} p^i+p^c & \kay/\Q_p\ \text{ unramified}\\
\ 2\cdot\sum_{i=0}^{c} p^i & \kay/\Q_p\ \text{ ramified}\  .
\end{cases}
\end{equation}

Here $e_s = [W_s : W]$ denotes the absolute ramification index.

To determine $\mathcal M_p\cdot\mathcal T$,  we first note that 
$\varphi\in (\Z_p+\Pi^\ell O_D)\setminus (\Z_p+\Pi^{\ell+1} O_D)$, where $\ell=2c$ in case $\kay$ is unramified over $\Q_p$, and $\ell=2c+1$ in case $\kay$ is ramified over $\Q_p$, comp. \cite{wewers2}, 1.2. Now we use the result of Keating \cite{keating}, Theorem~1.1 (see also \cite{vollaardARGOS}, Theorem~2.1),  which gives as the length of the  deformation locus of $\varphi$ in $\mathcal M_p$  exactly  the expression on the RHS of equation (\ref{altern}) above. 
\end{proof}

%For the RHS we use \cite{rapo}, Prop. 1.8. As the proof of that \marginpar {has to be modified}
%proposition shows, this gives a formula for $(\mathcal T_1 \cdot  
%\mathcal T_2 \cdot \mathcal S_p)$
%where $\mathcal S_p$ denotes the special fiber of the universal  
%deformation
%space of $(G, G)$ and where $\mathcal T_1 = \mathcal T (\psi_1)$ and $ 
%\mathcal T_2 = \mathcal T (\psi_2)$
%are the deformation loci of two isogenies $\psi_1, \psi_2 : G\to G$
%{\it with the property that $\psi_2$ has maximal valuation in its  
%residue
%class modulo $\Z_p\cdot\psi_1$.} We apply this formula in the case where
%$\psi_1$ is an isomorphism and where $^t\psi_1\circ\psi_2$ is our given
%endomorphism $\varphi$. More precisely, we change $\varphi$ by an  
%element in
%$\Z_p$ so that it satisfies the maximality condition above. This change
%does not affect $\mathcal T$. Let $a = v (\varphi)$ be the valuation  
%of $\varphi$. Then
%by \cite{wewers2}, 1.2., we have $c = \left[\frac{a}{2}\right]$.
%Furthermore, by \cite{rapo}, Prop. 1.8.
%\begin{equation*}
%(\mathcal T\cdot \mathcal M_p) = \
%\begin{cases}\
%2\cdot\sum_{i=0}^{c-1} p^i+p^c & a\ \text{ even}\\
%\ 2\cdot\sum_{i=0}^{c} p^i & a\ \text{ odd}\ .
%\end{cases}
%\end{equation*}
%Hence it remains to see that $\kay/\Q_p$ is unramified if and only if  
%$a$
%is even. This follows from \cite{wewers2}, 1.2.

\end{document}